 \received{\dots}{\dots}
\newsavebox{\wwide}
\newcommand{\wwidehat}[1]{\sbox{\wwide}{$#1$}
\ifdim\wd\wwide < 1.1 em \widehat{#1} \else
\setlength
{\unitlength}{0.01\wd\wwide}\overset
{\begin{picture}(100,6)
\path(0,0)(50,6)(100,0)
\end{picture}}{#1}\fi}
\newcommand{\wwidetilde}[1]{\sbox{\wwide}{$#1$}
\ifdim\wd\wwide < 1.1 em \widetilde{#1} \else
\setlength
{\unitlength}{0.01\wd\wwide}\overset
{\begin{picture}(100,6)
\path(0,0)(33,6)(45,6)(55,0)(67,0)(100,6)
\end{picture}}{#1}\fi}
\@undefined\usepackage[usenames,dvips]{color}
\else\usepackage[usenames,dvipsnames]{color}
\definecolor{ChadDarkBlue}{rgb}{.1,0,.2}  
\definecolor{ChadBlue}{rgb}{.1,.1,.5}  
\definecolor{ChadRoyal}{rgb}{.2,.2,.8}  
\definecolor{ChadGreen}{rgb}{0,.4,0}    
\definecolor{ChadRed}{rgb}{.5,0,.5}  
\def\zmena#1{{#1}} 
\def\autori#1{{#1}} 
\def\smallskip{\vskip\smallskipamount}
\def\medskip{\vskip\medskipamount}
\def\bigskip{\vskip\bigskipamount}
\numberwithin{equation}{section}
\theoremstyle{plain}
\newtheorem{theorem}{Theorem}[section]
\newtheorem{lemma}[theorem]{Lemma}
\newtheorem{proposition}[theorem]{Proposition}
\newtheorem{corollary}[theorem]{Corollary}
\theoremstyle{definition}
\newtheorem{definition}[theorem]{Definition}
\newtheorem{remark}[theorem]{Remark}
\newtheorem{observation}[theorem]{Observation}
\newtheorem{statement}[theorem]{Statement}
\def\operatorname#1{{\mathop{\rm #1}}}
\newcommand{\ord}{\operatorname{ord}}
\def\comp{\leftrightarrow}
\newcommand\implik{{\ \Longrightarrow\ }}
\newcounter{ok}
{\end{list}}
\newcounter{aok}
{\end{list}}
\def\go#1;#2;#3 {\vbox to0pt{\kern-#3\hbox{\kern#2 #1}\vss}\nointerlineskip}
\newcommand{\Mea}{{\text{\rm{}M}}}
\newcommand{\HMea}{{\text{\rm{}HM}}}
\newcommand{\Tea}{{\text{\rm{}T}}}
\newcommand{\C}{\text{\rm{}C}}
\newcommand{\Sh}{\text{\rm{}S}}
\newcommand{\itrm}[1]{\item[{\rm {#1}}]}
 \renewcommand{\le}{\leqslant}
\begin{document}

\title[Triple Representation Theorem]{Triple Representation Theorem for orthocomplete \\
homogeneous effect algebras}

\author[J. Niederle]{Josef~Niederle}
\email{niederle@math.muni.cz}

\author[J. Paseka]{Jan~Paseka} 
\email{paseka@math.muni.cz}
\address{Department of Mathematics and Statistics,
			Faculty of Science,
			Masaryk University,
			{Kotl\'a\v r{}sk\' a\ 2},
			CZ-611~37~Brno, Czech Republic}        	

\def\logoesf{
\begin{tabular}{l l}
\begin{tabular}{c}
{Supported by}\\
\phantom{\huge X}
\end{tabular}& \ \resizebox{8.58cm}{!}{\includegraphics{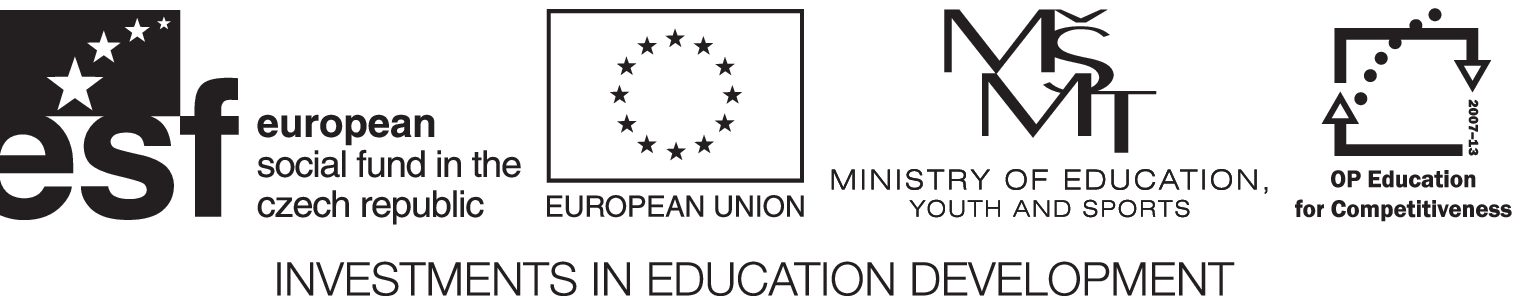}}
\end{tabular}}
\smallskip
\thanks{J. Paseka gratefully acknowledges Financial Support 
of the  Ministry of Education of the Czech Republic
under the project MSM0021622409 and of Masaryk University under the grant 0964/2009. 
Both authors acknowledge the support by ESF Project CZ.1.07/2.3.00/20.0051
Algebraic methods in Quantum Logic of the Masaryk University.\\
\logoesf}

\subjclass{MSC 03G12 \and 06D35 \and 06F25 \and 81P10}

\keywords{homogeneous effect algebra, 
orthocomplete effect algebra,
meager-orthocomplete effect algebra, 
lattice effect algebra, center, atom, 
sharp element, meager element,
hypermeager element}


\begin{abstract}
The aim of our paper is twofold. First, 
we thoroughly study the set of meager elements $\Mea(E)$, 
the set of sharp elements $\Sh(E)$ and
the center $\C(E)$ in the setting of
meager-orthocomplete homogeneous effect algebras $E$. 
Second, we prove the Triple Representation Theorem for 
sharply dominating meager-orthocomplete homogeneous effect algebras,
in particular orthocomplete homogeneous effect algebras. 
\end{abstract}


\maketitle

\zmena{\section*{Introduction}}

\label{intro}

Two equivalent quantum structures, D-posets and effect algebras were
introduced in the nineties of the twentieth century. These
were considered as ``unsharp'' generalizations of the
structures which arise in quantum mechanics, in particular, of orthomodular
lattices and MV-algebras. Effect algebras aim to
describe ``unsharp'' event structures in quantum mechanics
in the language of algebra.

Effect algebras are fundamental in investigations of fuzzy probability
theory too. In the fuzzy probability frame, the elements of an effect algebra
represent fuzzy events which are used to construct fuzzy random variables.

The aim of our paper is twofold. First, 
we thoroughly study the set of meager elements $\Mea(E)$, 
the set of sharp elements $\Sh(E)$ and
the center $\C(E)$ in the setting of
meager-orthocomplete homogeneous effect algebras $E$.
Second, in Section~\ref{tretikapitola}  we prove the Triple Representation Theorem,
which was established 
by Jen\v{c}a in \cite{jenca} in the setting of complete lattice effect 
algebras, for sharply dominating meager-orthocomplete
homogeneous effect algebras, in particular orthocomplete
homogeneous effect algebras.

As a by-product of our study we show that an effect algebra $E$ is Archime\-dean if and only if 
the corresponding generalized  
effect algebra $\Mea(E)$ is Archi\-me\-dean and that 
any  homogeneous meager-orthocomplete sharply dominating 
 effect algebra  $E$  can be covered  by Archimedean  Heyting effect algebras  which form
blocks.

\medskip

\section{{Preliminaries} and basic facts}

\zmena{Effect algebras were introduced by Foulis and 
 Bennett (see \cite{FoBe}) 
for modelling unsharp measurements in a Hilbert space. In this case the 
set $\EuScript E(H)$ of effects is the set of all self-adjoint operators $A$ on 
a Hilbert space $H$ between the null operator $0$ and the identity 
operator $1$ and endowed with the partial operation $+$ defined 
iff  $A+B$ is in $\EuScript E(H)$, where $+$ is the usual operator sum. }

\zmena{In general form, an effect algebra is in fact a partial algebra 
with one partial binary operation and two unary operations satisfying 
the following axioms due to Foulis and 
 Bennett.}

\begin{definition}\label{def:EA}{\autori{{\rm{}\cite{ZR62}  }}
{\rm A partial algebra $(E;\oplus,0,1)$ is called an {\em effect algebra} if
$0$, $1$ are two distinct elements, 
called the {\em zero} and the {\em unit}  element,  and $\oplus$ is a partially
defined binary operation called the {\em orthosummation} 
on $E$ which satisfy the following
conditions for any $x,y,z\in E$:
\begin{description}
\item[\rm(Ei)\phantom{ii}] $x\oplus y=y\oplus x$ if $x\oplus y$ is defined,
\item[\rm(Eii)\phantom{i}] $(x\oplus y)\oplus z=x\oplus(y\oplus z)$  if one
side is defined,
\item[\rm(Eiii)] for every $x\in E$ there exists a unique $y\in
E$ such that $x\oplus y=1$ (we put $x'=y$),
\item[\rm(Eiv)\phantom{i}] if $1\oplus x$ is defined then $x=0$.
\end{description}
}%
{\rm{}$(E;\oplus,0,1)$ is  called an {\em orthoalgebra} if 
$x\oplus x$ exists implies that $x= 0$  (see \cite{grerut}).}}
\end{definition}

We often denote the effect algebra $(E;\oplus,0,1)$ briefly by
$E$. On every effect algebra $E$  a partial order
$\le$  and a partial binary operation $\ominus$ can be 
introduced as follows:
\begin{center}
$x\le y$ \mbox{ and } {\autori{$y\ominus x=z$}} \mbox{ iff }$x\oplus z$
\mbox{ is defined and }$x\oplus z=y$\,.
\end{center}

If $E$ with the defined partial order is a lattice (a complete
lattice) then $(E;\oplus,0,1)$ is called a {\em lattice effect
algebra} ({\em a complete lattice effect algebra}).

Mappings from one effect algebra to
another one that preserve units and orthosums are called 
{\em morphisms of effect algebras}, and bijective
morphisms of effect algebras having inverses that are 
morphisms of effect algebras are called 
{\em isomorphisms of effect algebras}.

\begin{definition}\label{subef}{\rm
Let $E$ be an  effect algebra.
Then $Q\subseteq E$ is called a {\em sub-effect algebra} of  $E$ if 
\begin{enumerate}
\item[(i)] $1\in Q$
\item[(ii)] if out of elements $x,y,z\in E$ with $x\oplus y=z$
two are in $Q$, then $x,y,z\in Q$.
\end{enumerate}
If $E$ is a lattice effect algebra and $Q$ is a sub-lattice and a sub-effect
algebra of $E$, then $Q$ is called a {\em sub-lattice effect algebra} of $E$.}
\end{definition}

Note that a sub-effect algebra $Q$ 
(sub-lattice effect algebra $Q$) of an  effect algebra $E$ 
(of a lattice effect algebra $E$) with inherited operation 
$\oplus$ is an  effect algebra (lattice effect algebra) 
in its own right.

\begin{definition}

\rm
{\it{}(1)}: A {\em generalized effect algebra} ($E$; $\oplus$, 0) is a set $E$ with element $0\in E$ and partial
binary operation $\oplus$ satisfying, for any $x,y,z\in E$, conditions{\advance\parindent by 5pt
\begin{enumerate}
\item[(GE1)] $x\oplus y = y\oplus x$ if one side is defined,

\item[(GE2)] $(x\oplus y)\oplus z=x\oplus(y\oplus z)$ if one side is
defined,

\item[(GE3)] if $x\oplus y=x\oplus z$ then $y=z$,

\item[(GE4)] if $x\oplus y=0$ then $x=y=0$,

\item[(GE5)] $x\oplus 0=x$ for all $x\in E$.
\end{enumerate}}
\begin{description}
\item[(2)] A binary relation $\le$ (being a partial order) and 
a partial binary operation $\ominus$ on $E$ can be defined by:
\begin{center}
$x\le y$ \mbox{ and } {{$y\ominus x=z$}} \mbox{ iff }$x\oplus z$
\mbox{ is defined and }$x\oplus z=y$\,.
\end{center} 
\item[(3)] A nonempty subset $Q\subseteq E$ is called a {\em sub-generalized effect algebra} 
of $E$ if out of elements $x,y,z\in E$ with $x\oplus y=z$ at least two are in $Q$ then $x,y,z\in
Q$.  Then $Q$ is a generalized effect algebra in its own right. 
\end{description}
\end{definition}


For an element $x$ of a generalized effect algebra $E$ we write
$\ord(x)=\infty$ if $nx=x\oplus x\oplus\dots\oplus x$ ($n$-times)
exists for every positive integer $n$ and we write $\ord(x)=n_x$
if $n_x$ is the greatest positive integer such that $n_xx$
exists in $E$.  A generalized effect algebra $E$ is {\em Archimedean} if
$\ord(x)<\infty$ for all $x\in E$.

Every effect algebra is a generalized effect algebra.

\begin{definition}
\rm
We say that a finite system $F=(x_k)_{k=1}^n$ of not necessarily
different elements of a generalized effect algebra $E$ is
\zmena{\em orthogonal} if $x_1\oplus x_2\oplus \cdots\oplus
x_n$ (written $\bigoplus\limits_{k=1}^n x_k$ or $\bigoplus F$) exists
in $E$. Here we define $x_1\oplus x_2\oplus \cdots\oplus x_n=
(x_1\oplus x_2\oplus \cdots\oplus x_{n-1})\oplus x_n$ supposing
that $\bigoplus\limits_{k=1}^{n-1}x_k$ is defined and
$(\bigoplus\limits_{k=1}^{n-1}x_k)\oplus x_n$ exists. We also define 
$\bigoplus \emptyset=0$.
An arbitrary system
$G=(x_{\kappa})_{\kappa\in H}$ of not necessarily different
elements of $E$ is called \zmena{\em orthogonal} if $\bigoplus K$
exists for every finite $K\subseteq G$. We say that for a \zmena{orthogonal} 
system $G=(x_{\kappa})_{\kappa\in H}$ the
element $\bigoplus G$ exists iff
$\bigvee\{\bigoplus K
\mid
K\subseteq G$ is finite$\}$ exists in $E$ and then we put
$\bigoplus G=\bigvee\{\bigoplus K\mid K\subseteq G$ is
finite$\}$. We say that $\bigoplus G$ is the {\em orthogonal sum} 
of $G$ and $G$ is {\em orthosummable}. (Here we write $G_1\subseteq G$ iff there is
$H_1\subseteq H$ such that $G_1=(x_{\kappa})_{\kappa\in
H_1}$).
{
We denote $G^\oplus:=\{\bigoplus K\mid K\subseteq G$ is   
finite$\}$. $G$ is called {\em bounded} if there is an upper bound 
of $G^\oplus$.}
\end{definition}

Note that, in any ef\/fect algebra $E$, the following
inf\/inite distributive law holds (see \cite[Proposition 1.8.7]{dvurec}):
\begin{equation}
\tag{IDL}
\Big(\bigvee_{\alpha} c_{\alpha}\Big)\oplus b = \bigvee_{\alpha} (c_{\alpha}\oplus b)
\end{equation}
provided that $\bigvee_{\alpha} c_{\alpha}$ and  $(\bigvee_{\alpha} c_{\alpha})\oplus b$
exist. We then have the following proposition.

\begin{proposition}\label{infasoc}
 Let $E$ be a generalized effect algebra, 
$G_1=(x_{\kappa})_{\kappa\in H_1}$ and $G_2=(x_{\kappa})_{\kappa\in H_2}$  
be orthosummable orthogonal systems in $E$ such that $H_1\cap H_2=\emptyset$ and 
$\bigoplus G_1\oplus \bigoplus G_2$ exists. 
Then $G=(x_{\kappa})_{\kappa\in {H_1\cup H_2}}$ is an orthosummable 
orthogonal system and $\bigoplus G=\bigoplus G_1\oplus \bigoplus G_2$.
\end{proposition}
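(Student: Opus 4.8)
The plan is to verify that $G$ is orthogonal, then that $\bigvee G^\oplus$ exists and equals $\bigoplus G_1 \oplus \bigoplus G_2$, using the infinite distributive law (IDL) as the main tool. Write $s_1 = \bigoplus G_1$ and $s_2 = \bigoplus G_2$, and let $s = s_1 \oplus s_2$, which exists by hypothesis.

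First I would check that $G$ is an orthogonal system. Take a finite $K \subseteq G$; since $H_1 \cap H_2 = \emptyset$, it splits uniquely as $K = K_1 \cup K_2$ with $K_1 \subseteq G_1$, $K_2 \subseteq G_2$ and disjoint index sets. Then $\bigoplus K_1 \le s_1$ and $\bigoplus K_2 \le s_2$, so $\bigoplus K_1 \oplus \bigoplus K_2 \le s_1 \oplus s_2 = s$ exists (here one uses the standard fact that $a \le a'$, $b \le b'$, $a' \oplus b'$ defined implies $a \oplus b$ defined and $a \oplus b \le a' \oplus b'$, which follows from the effect-algebra axioms and is available in the generalized setting). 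Hence $\bigoplus K = \bigoplus K_1 \oplus \bigoplus K_2$ exists, so every finite subsystem of $G$ is summable and $G$ is orthogonal; moreover $G$ is bounded by $s$, and every element of $G^\oplus$ has the form $\bigoplus K_1 \oplus \bigoplus K_2$ with $K_i \subseteq G_i$ finite.

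Next I would identify $\bigvee G^\oplus$. The key observation is that the family $\{\bigoplus K_1 \oplus \bigoplus K_2 : K_1 \subseteq G_1, K_2 \subseteq G_2 \text{ finite}\}$ is upward directed (given two such elements, pass to the unions of the respective finite index sets), and cofinally we may first fix $K_1$ and let $K_2$ range: by (IDL), for fixed finite $K_1$,
\[
\bigvee_{K_2} \Big(\bigoplus K_1 \oplus \bigoplus K_2\Big) = \bigoplus K_1 \oplus \bigvee_{K_2} \bigoplus K_2 = \bigoplus K_1 \oplus s_2,
\]
provided $\bigvee_{K_2}\bigoplus K_2 = s_2$ exists (it does, by orthosummability of $G_2$) and $\bigoplus K_1 \oplus s_2$ exists (it does, since $\bigoplus K_1 \le s_1$ and $s_1 \oplus s_2$ exists). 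Then letting $K_1$ range and applying (IDL) once more,
\[
\bigvee_{K_1}\Big(\bigoplus K_1 \oplus s_2\Big) = \Big(\bigvee_{K_1}\bigoplus K_1\Big) \oplus s_2 = s_1 \oplus s_2 = s.
\]
A short directedness argument shows that this iterated supremum coincides with $\bigvee G^\oplus$: any $\bigoplus K \in G^\oplus$ is $\le \bigoplus K_1 \oplus s_2 \le s$, so $s$ is an upper bound of $G^\oplus$; and any upper bound $u$ of $G^\oplus$ dominates $\bigoplus K_1 \oplus \bigoplus K_2$ for all finite $K_1, K_2$, hence dominates $\bigoplus K_1 \oplus s_2$ for each $K_1$ (taking the sup over $K_2$), hence dominates $s$. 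Therefore $\bigvee G^\oplus$ exists and equals $s = \bigoplus G_1 \oplus \bigoplus G_2$, which is exactly the assertion.

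The main obstacle I anticipate is purely bookkeeping: making sure each application of (IDL) is legitimate, i.e., that the relevant joins and the relevant orthosums both exist before invoking it, and handling the two-parameter supremum correctly (one cannot in general interchange $\bigvee$ over $K_1$ and over $K_2$ without a directedness/monotonicity argument). There is also a minor subtlety in that (IDL) as quoted is stated for effect algebras; one should check it transfers to generalized effect algebras, or alternatively argue directly from the definition of $\bigoplus$ as a join together with associativity of $\oplus$ and the cancellation/monotonicity properties (GE1)–(GE5). None of these steps is deep, but they must be assembled in the right order.
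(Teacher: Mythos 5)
Your proof is correct and follows essentially the same route as the paper: first check that all finite subsums of $G$ are bounded by $\bigoplus G_1\oplus\bigoplus G_2$ (so $G$ is orthogonal), then show this element is the least upper bound of $G^\oplus$ by absorbing $G_2$ first and $G_1$ second. The only real difference is presentational: where you invoke (IDL) twice and rightly flag that it must be transferred to generalized effect algebras, the paper carries out the equivalent $\ominus$-manipulations by hand (from $z\geq\bigoplus K_1\oplus\bigoplus K_2$ it passes to $z\ominus\bigoplus K_1$ being an upper bound of $G_2^\oplus$, etc.), which sidesteps that transfer issue.
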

\begin{proof}  For any finite  
$N_1\subseteq {H_1}$ and any finite  
$N_2\subseteq {H_2}$, we have that the orthosum 
  $\bigoplus_{\kappa\in N_1} x_{\kappa} \oplus 
\bigoplus_{\kappa\in N_2} x_{\kappa}\leq \bigoplus G_1\oplus \bigoplus G_2$ 
exists. Hence $G$ is an orthogonal system.

Evidently, $\bigoplus G_1\oplus \bigoplus G_2\geq \bigoplus F$
for any finite system $F=(x_{\kappa})_{\kappa\in H}$, $H\subseteq {H_1\cup H_2}$ finite. 
Therefore, $\bigoplus G_1\oplus \bigoplus G_2$ is an upper bound of $G^\oplus$. 
Let $z\in E$ be any upper bound  of $G^\oplus$. Then, for any finite  
$N_1\subseteq {H_1}$ and any finite  
$N_2\subseteq {H_2}$, we have that $z\geq \bigoplus_{\kappa\in N_1} x_{\kappa} \oplus 
\bigoplus_{\kappa\in N_2} x_{\kappa}$. Therefore 
 $z \ominus \bigoplus_{\kappa\in N_1} x_{\kappa} \geq 
\bigoplus_{\kappa\in N_2} x_{\kappa}$ and 
$z \ominus \bigoplus_{\kappa\in N_1} x_{\kappa}$ is an upper bound of $G_2^\oplus$. 
This yields that,  for any finite  
$N_1\subseteq {H_1}$, $z \ominus \bigoplus_{\kappa\in N_1} x_{\kappa}\geq \bigoplus G_2$, 
i.e., $z \ominus \bigoplus G_2\geq \bigoplus_{\kappa\in N_1} x_{\kappa}$ .
Hence $z \ominus \bigoplus G_2$ is an upper bound of $G_1^\oplus$. 
Summing up, $z\geq \bigoplus G_1\oplus \bigoplus G_2$ and this gives that 
 $G$ is orthosummable and 
$\bigoplus G=\bigoplus G_1\oplus \bigoplus G_2$.
\end{proof}

\begin{definition}
\rm A generalized effect algebra
$E$ is called \emph{orthocomplete} if  every bounded orthogonal 
system  is orthosummable.
\end{definition}

\begin{observation}\label{semiarch}
Let $E$ be an {orthocomplete} generalized effect algebra, 
$x, y\in E$ such that $ny\leq x$ for every positive integer $n$.
Then $y=0$ .
\end{observation}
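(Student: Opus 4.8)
The plan is to exploit the hypothesis that $E$ is orthocomplete by forming the orthogonal system consisting of countably many copies of $y$ and showing it is bounded, hence orthosummable. Concretely, let $G=(y)_{n\in\N}$ be the system with $x_n=y$ for all $n\in\N$. The assumption $ny\le x$ for every positive integer $n$ says precisely that every finite partial orthosum $\bigoplus_{k\in K} y = |K|\cdot y$ exists (for $|K|\le n$ it equals $ny'$ for the appropriate $n'$, using the associativity axiom (GE2) to rebracket) and is bounded above by $x$. Thus $G$ is a bounded orthogonal system, and by orthocompleteness $s:=\bigoplus G=\bigvee\{ny\mid n\in\N\}$ exists in $E$.

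Next I would extract a contradiction-free computation showing $y=0$. The key is that $s$ is a ``fixed point'' for adding $y$: since $G$ with one index removed is order-isomorphic to $G$ itself, we have $y\oplus s = y\oplus\bigvee_n ny = \bigvee_n (n+1)y$ by the infinite distributive law (IDL) quoted before Proposition~\ref{infasoc}, and the right-hand side is again $\bigvee_n ny = s$ because shifting the index set does not change the supremum. Hence $y\oplus s$ is defined and equals $s = 0\oplus s$. Now I invoke cancellativity: in a generalized effect algebra, $s = y\oplus s$ combined with $s = 0 \oplus s = s$ and axiom (GE3) (applied after rewriting $s = s\oplus 0 = s \oplus y$, using commutativity (GE1)) gives $y = 0$. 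More carefully, from $s\oplus y = s = s\oplus 0$ and (GE3) we conclude $y=0$.

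I expect the main obstacle to be the bookkeeping needed to justify that the system of copies of $y$ is genuinely orthogonal in the sense of the paper's definition, i.e.\ that $\bigoplus K$ exists for every finite subsystem $K$. One must argue that $ny\le x$ for all $n$ forces $ky$ to exist for every $k$ (not merely that $ny$ exists for a single witnessing $n$), but this is immediate: the existence of $ny\le x$ for arbitrarily large $n$ together with (Eii)/(GE2) yields that each $ky$ with $k\le n$ exists, and letting $n\to\infty$ covers all $k$. A second minor point is that the index set of the shifted system $(y)_{n\ge 2}$ must be recognized as giving the same set of finite partial sums $G^\oplus\setminus\{0\}$, so that its supremum coincides with $s$; this is purely formal. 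Once these are in place, the IDL step and the cancellation law (GE3) finish the argument in two lines, so the real content is simply ``bounded $\Rightarrow$ orthosummable $\Rightarrow$ the sup absorbs $y$ $\Rightarrow$ $y=0$.''
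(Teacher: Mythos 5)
Your proposal is correct and follows essentially the same route as the paper's own proof: form the constant bounded orthogonal system $(y)_{n\in\mathbb N}$, use orthocompleteness to obtain $s=\bigoplus G$, observe via (IDL) that $s=y\oplus s$ because removing one copy of $y$ leaves the same set of finite partial sums, and cancel to get $y=0$. The only (shared) fine point is that the (IDL) step tacitly uses that $y\oplus s$ is defined, which is easily justified since $ny\oplus y\le s$ gives $ny\le s\ominus y$ for all $n$; the paper glosses over this in exactly the same way.
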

\begin{proof} Let $G=(g_n)_{n=1}^{\infty}$, $g_n=y$  for every positive integer $n$. Then, for all $K\subseteq G$ finite, we 
have $\bigoplus K\leq x$, hence $G$ is {bounded} and 
$\bigoplus G$ exists. In virtue of (IDL), 
$\bigoplus G=g_1 \oplus \bigoplus (g_n)_{n=2}^{\infty}= 
g_1 \oplus \bigoplus G$. Therefore $0=g_1=y$.
\end{proof}

Let us remark a well known fact that every orthocomplete 
effect algebra is Archimedean.

{\renewcommand{\labelenumi}{{\normalfont  (\roman{enumi})}}
\begin{definition}
\rm
An element $x$ of an effect algebra $E$ is called  
\begin{enumerate}
\item  \emph{sharp} if $x\wedge x'=0$. The set 
$\Sh(E)=\{x\in E \mid x\wedge x'=0\}$ is called a \emph{set of all sharp elements} 
of $E$ (see \cite{gudder1}).
\item  \emph{principal}, if $y\oplus z\leq x$ for every $y, z\in E$ 
such that $y, z \leq x$ and $y\oplus z$ exists.
\item  \emph{central}, if $x$ and $x'$ are principal and,
for every $y \in E$ there are $y_1, y_2\in E$ such that 
$y_1\leq x, y_2\leq x'$, and $y=y_1\oplus y_2$  (see \cite{GrFoPu}). 
The \emph{center} 
$\C(E)$ of $E$ is the set of all central elements of $E$.
\end{enumerate}
\end{definition}

If $x\in E$ is a principal element, then $x$ is sharp and the interval 
$[0, x]$ is an effect algebra with the greatest element $x$ and the partial 
operation given by restriction of $\oplus$ to $[0, x]$. 

\begin{statement}{\rm \cite[Theorem 5.4]{GrFoPu}}
The center $\C(E)$ of an effect algebra $E$ is a sub-effect 
algebra of $E$ and forms a Boolean algebra. For every
central element $x$ of $E$, $y=(y\wedge x)\oplus (y\wedge x')$ for all 
$y\in E$.  If $x, y\in \C(E)$ are orthogonal, we have 
$x\vee y =  x\oplus y$  and $x\wedge y =  0$. 
\end{statement}

{\renewcommand{\labelenumi}{{\normalfont  (\roman{enumi})}}
\begin{statement}{\rm\cite[Lemma   3.1.]{jencapul}}\label{gejzapulm} Let $E$ be an effect algebra, 
$x, y\in E$ and $c, d\in \C(E)$. Then:
\begin{enumerate}
\item If $x\oplus y$ exists then  $c\wedge (x\oplus y)=(c\wedge x)\oplus (c\wedge y)$.
\item If $c\oplus d$ exists then  $x\wedge (c\oplus d)=(x\wedge c)\oplus (x\wedge d)$.  
\end{enumerate}
\end{statement}

\begin{definition}
\rm
A subset $M$ of a generalized effect algebra $E$ is called 
\emph{internally compatible} (\emph{compatible}) if for every finite 
subset $M_F$ of $M$  there is a finite orthogonal family $(x_1, \dots, x_n)$ 
of elements from $M$ ($E$) such that for every $m\in  M_F$ 
there is a set $A_F\subseteq \{ 1, \dots, n \}$ with 
$m =\bigoplus_{i\in A_F} x_i$. If $\{ x, y \}$ is a compatible set, 
we write $x\comp y$ (see \cite{jenca, Kop2}).
\end{definition}

Evidently, $x\comp y$ iff there are $p, q, r\in E$ such 
that $x=p\oplus q$, $y=q\oplus r$ and $p\oplus q\oplus r$ exists iff 
there are $c, d\in E$ such that $d\leq x\leq c$, $d\leq y\leq c$ and
$c\ominus x=y\ominus d$. 
Moreover, if $x\wedge y$ exists then $x\comp y$ iff 
$x\oplus (y\ominus (x\wedge y))$ exists.

\section{Orthocomplete homogeneous  effect algebras}
\label{orto}

\begin{definition}\label{rdp}{\rm
An effect algebra $E$  satisfies the \emph{Riesz decomposition property}
(or RDP) if, for all $u,v_1, v_2\in E$ such that
$u\leq v_1\oplus v_2$, there are $u_1, u_2$ such that 
$u_1\leq v_1, u_2\leq v_2$ and $u=u_1\oplus u_2$. 
\\
A lattice effect algebra in which RDP holds is called an \emph{MV-effect
algebra}.
\\
An effect algebra $E$ is called \emph{homogeneous} if, 
for all $u,v_1,v_2\in E$ such that $u\leq v_1\oplus v_2\leq u'$, 
there are $u_1,u_2$ such that $u_1\leq v_1$, $u_2\leq v_2$ 
and $u=u_1\oplus u_2$ (see \cite{jenca2001}). }
\end{definition}


\begin{lemma}\label{xshom}
Let $E$ be a homogeneous effect algebra and let 
$u,v_1,v_2\in E$ such that $u\leq v_1\oplus v_2\leq u'$ and $v_1\in\Sh(E)$. 
Then $u\leq v_2$ and $u\wedge v_1=0$.
\end{lemma}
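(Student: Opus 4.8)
The hypothesis $u \le v_1 \oplus v_2 \le u'$ is exactly the shape required by homogeneity, so the first step is to apply the definition of homogeneous effect algebra directly: there exist $u_1, u_2 \in E$ with $u_1 \le v_1$, $u_2 \le v_2$ and $u = u_1 \oplus u_2$. The goal is then to show $u_1 = 0$, since from $u_1 = 0$ we get $u = u_2 \le v_2$ immediately, and then $u \le v_2 \le u'$ forces $u \wedge u'$ to dominate... actually more directly, once $u \le v_2$ we still need $u \wedge v_1 = 0$, which I would get from $u_1 = 0$ combined with the sharpness of $v_1$ as follows.

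The key observation is that $u_1 \le v_1$ and also $u_1 \le u \le u' \le u_1'$ (using $u_1 \le u$, hence $u' \le u_1'$, together with $v_1 \oplus v_2 \le u'$ giving $u \le u'$ — wait, we need $u \le u'$, which follows since $u \le v_1\oplus v_2 \le u'$). From $u_1 \le u'$ and $u_1 \le v_1 \oplus v_2$ with $v_1 \oplus v_2 \le u'$... the cleaner route: I want to show $u_1 \le v_1'$. We have $u_1 \le u \le v_1 \oplus v_2$, and since $u_1 \le v_1$ with $v_1$ sharp, if I can show $u_1 \le v_1'$ as well then $u_1 \le v_1 \wedge v_1' = 0$. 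To get $u_1 \le v_1'$: note $v_1 \le v_1 \oplus v_2 \le u'$, so $u \le u'$ and in particular $u_1 \oplus u_1 \le u_1 \oplus u_2 = u \le u' \le u_1'$, whence $u_1 \oplus u_1 \oplus u_1$ exists; more usefully, $u_1 \le u'$ and since $u_2 \le u$ we also have $u_1 \le u \le v_1 \oplus v_2$. Hmm — let me instead use: $v_1 \oplus v_2 \le u'$ means $v_1 \oplus v_2 \oplus u$ exists, so $v_1 \oplus u$ exists and $v_1 \oplus u \le v_1 \oplus v_2 \oplus u \le 1$; since $u = u_1 \oplus u_2 \ge u_1$, the element $v_1 \oplus u_1$ exists too, and $v_1 \oplus u_1 \le v_1 \oplus u$. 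Now $v_1 \oplus u_1 \le (v_1 \oplus u_1) \oplus u_2 = v_1 \oplus u \le$ (something $\le 1$), but more to the point $u_1 \le v_1'$ would follow if $v_1 \oplus u_1 \le 1$, which it is; wait, $v_1 \oplus u_1 \le 1$ always just says $u_1 \le v_1'$. Yes! Since $v_1 \oplus u_1$ exists (as a sub-sum of the existing sum $v_1 \oplus v_2 \oplus u_1 \le v_1\oplus v_2\oplus u$, using $u_1\le u$), we get $u_1 \le v_1'$. Combined with $u_1 \le v_1$ and $v_1 \in \Sh(E)$, this gives $u_1 \le v_1 \wedge v_1' = 0$, so $u_1 = 0$ and $u = u_2 \le v_2$.

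It remains to prove $u \wedge v_1 = 0$. Since $u \le v_2$ and $v_1 \oplus v_2$ exists, $v_1 \oplus u$ exists, so $u \le v_1'$; also trivially $u \wedge v_1 \le v_1$. Thus $u \wedge v_1 \le v_1 \wedge v_1' = 0$ by sharpness of $v_1$ — but wait, I should check $u \wedge v_1$ exists: it does, since $0$ is a lower bound and $u \wedge v_1 \le u$ forces any lower bound of $\{u,v_1\}$ to be $\le v_1 \wedge v_1' = 0$, so in fact $0$ is the infimum. Hence $u \wedge v_1 = 0$, completing the proof.

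\textbf{Main obstacle.} The only delicate point is the bookkeeping with the partial operation $\oplus$ — specifically verifying that the auxiliary sums like $v_1 \oplus u_1$ genuinely exist, which I handle by exhibiting them as ``sub-sums'' of the given sum $v_1 \oplus v_2 \oplus u$ via associativity (Eii) and monotonicity ($u_1 \le u$, so $v_1 \oplus v_2 \oplus u_1$ exists and hence $v_1 \oplus u_1$ exists). Everything else is a direct application of homogeneity and the definition of sharpness; no deep structural results are needed.
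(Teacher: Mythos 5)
Your argument is correct and follows essentially the same route as the paper: apply homogeneity to write $u=u_1\oplus u_2$ with $u_1\le v_1$, $u_2\le v_2$, then use the sharpness of $v_1$ (via $u_1\le v_1'$, respectively $u\le v_1'$) to conclude $u_1=0$ and $u\wedge v_1=0$. The only cosmetic difference is the order of the two conclusions — the paper first shows every common lower bound of $u$ and $v_1$ is zero and then deduces $u_1=0$, while you kill $u_1$ first — which does not change the substance.
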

\begin{proof} Since $E$ is homogeneous ,
there are $u_1,u_2$ such that $u_1\leq v_1\leq u'$, $u_2\leq v_2$ 
and $u=u_1\oplus u_2$. Let $w\in E$ such that $w\leq u, v_1$. 
Then $w\leq v_1\wedge v_1'=0$.
Therefore also $u_1\leq u\wedge v_1=0$, i.e. 
$u=u_2\leq v_2$. 
\end{proof}

\begin{statement}\label{gejzasum}{\rm\cite[Proposition 2]{jenca}}
\nopagebreak
\begin{enumerate}
\item[{\rm (i)}]    Every orthoalgebra is homogeneous.
\itrm{(ii)}    Every lattice effect algebra is homogeneous.
\itrm{(iii)}    An effect algebra  $E$ has the Riesz decomposition 
property if and only if  $E$ is homogeneous and compatible.

Let $E$ be a homogeneous effect algebra.
\itrm{(iv)}    A subset $B$ of $E$ is a maximal sub-effect 
algebra of $E$ with the Riesz decomposition property (such  
$B$ is called a {\em block} of  $E$) if and only if  $B$ 
is a maximal internally compatible subset of $E$ containing $1$.
\itrm{(v)} Every finite compatible subset of $E$ is a subset of 
some block. This implies that every homogeneous effect algebra 
is a union of its blocks.
\itrm{(vi)}    $\Sh(E)$ is a sub-effect algebra of $E$.
\itrm{(vii)}    For every block $B$, $\C(B) = \Sh(E)\cap B$.
\itrm{(viii)}    Let $x\in B$, where $B$ is a block of $E$. 
Then $\{ y \in E \mid  y \leq x\ \text{and}\ y \leq x'\}\subseteq  B$.
\end{enumerate}
\end{statement}

Hence the class of homogeneous effect algebras includes orthoalgebras, 
effect algebras satisfying the Riesz decomposition property 
and lattice effect algebras.

An important class of effect algebras was introduced by Gudder in \cite{gudder1} 
and \cite{gudder2}. Fundamental example is the 
standard Hilbert spaces effect algebra ${\mathcal E}({\mathcal H})$.

For an element $x$ of an effect algebra $E$ we denote
$$
\begin{array}{r c  l c l}
\widetilde{x}&=\bigvee_{E}\{s\in \Sh(E) \mid s\leq x\}&%
\phantom{xxxxx}&\text{if it exists and belongs to}\ \Sh(E)\phantom{.}\\
\wwidehat{x}&=\bigwedge_{E}\{s\in \Sh(E) \mid s\geq x\}&\phantom{xxxxx}&%
\text{if it exists and belongs to}\ \Sh(E).
\end{array}
$$

\begin{definition} {\rm (\cite{gudder1},  \cite{gudder2}.) 
An effect algebra $(E, \oplus, 0,
1)$ is called {\em sharply dominating} if for every $x\in E$ there
exists $\wwidehat{x}$, the smallest sharp element  such that $x\leq
\wwidehat{x}$. That is $\wwidehat{x}\in \Sh(E)$ and if $y\in \Sh(E)$ satisfies
$x\leq y$ then $\wwidehat{x}\leq y$. }
\end{definition}

Recall that evidently an effect algebra $E$ is sharply dominating iff 
for every $x\in E$ there exists $\widetilde{x}\in \Sh(E)$ such
that $\widetilde{x}\leq x$ and if $u\in \Sh(E)$
satisfies $u\leq x$ then $u\leq \widetilde{x}$ iff for every $x\in E$ there exist 
a smallest sharp element $\wwidehat{x}$ over $x$ and a greatest sharp 
element $\widetilde{x}$ below $x$.

In what follows set (see \cite{jenca,wujunde})
$$\Mea(E)=\{x\in E \mid\ \text{if}\ v\in \Sh(E)\ \text{satisfies}\ v\leq x\ 
\text{then}\ v=0\}.$$

An element $x\in \Mea(E)$ is called {\em meager}. Moreover, $x\in \Mea(E)$ 
iff $\widetilde{x}=0$. Recall that $x\in \Mea(E)$, $y\in E$, $y\leq x$ implies 
$y\in \Mea(E)$ and $x\ominus y\in \Mea(E)$.

\begin{definition}\label{hypsemea}
\rm Let $E$ be an effect algebra and let 
$$\HMea(E)=\{x\in E \mid\ \text{there is}\ y\in E\ \text{such that}\ x\leq y\ 
\text{and}\ x\leq y'\}.$$
An element $x\in \HMea(E)$ is called {\em hypermeager}.
\end{definition}
Every hypermeager element is meager. Since both $\Mea(E)$ and 
$\HMea(E)$ are downsets of $E$ they form together with the corresponding restriction of the operation $\oplus$ a generalized effect algebra. 

\begin{lemma}\label{ordinffin}
(1) Let \(\operatorname{ord}(y)=\infty\) in \(E\).
\(\{ky\mid k\in\mathbb N\}\subseteq \HMea(E)\).
\\
(2)Let \(\operatorname{ord}(y)=n_y\ne\infty\) in \(E\).
\(\{ky\mid k\in\mathbb N, k\leq \frac{n_y}{2}\}\subseteq \HMea(E)\).
\end{lemma}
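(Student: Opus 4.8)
The plan is to produce, for each admissible $k$, an explicit witness $z\in E$ realising $ky\in\HMea(E)$, i.e. with $ky\le z$ and $ky\le z'$; in both parts the witness will itself be a multiple of $y$. I will use two elementary observations: first, $x\le z'$ holds if and only if $x\oplus z$ is defined (since $z'=1\ominus z$); second, for nonnegative integers $i\le j$ for which $jy$ exists, $iy$ exists and $iy\le jy$, because $jy=iy\oplus(j-i)y$.

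For (1), assume $\ord(y)=\infty$, so $my$ exists for every positive integer $m$. Fix $k\in\N$; the case $k=0$ gives $0\in\HMea(E)$ trivially (take $z=0$, using $0\le 0$ and $0\le 0'=1$), so suppose $k\ge 1$. Then $2k$ is a positive integer, hence $ky\oplus ky=2ky$ exists; taking $z=ky$ we get $ky\le ky$ and, since $ky\oplus z$ is defined, $ky\le(ky)'=z'$. Thus $ky\in\HMea(E)$.

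For (2), assume $\ord(y)=n_y<\infty$ and fix $k\in\N$ with $k\le n_y/2$, equivalently $2k\le n_y$. Since $0\le n_y-k\le n_y$ and $n_y y$ exists, the element $z:=(n_y-k)y$ exists by the second observation. From $2k\le n_y$ we obtain $k\le n_y-k$, so again by the second observation $ky\le(n_y-k)y=z$; moreover $ky\oplus z=ky\oplus(n_y-k)y=n_y y$ exists, so $ky\le z'$. Hence $ky\in\HMea(E)$.

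I do not anticipate any real obstacle here: the argument is a direct unwinding of the definition of $\HMea(E)$ together with the fact that partial multiples of $y$ below $n_y y$ (respectively arbitrary multiples when $\ord(y)=\infty$) exist. The only point meriting attention is the integer bookkeeping — ensuring every multiple $iy$ that appears is actually defined and deriving the inequality $k\le n_y-k$ from $k\le n_y/2$.
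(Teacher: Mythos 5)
Your proof is correct and follows essentially the paper's argument: the paper simply notes that in either case $(2k)y$ exists, hence $ky\le (ky)'$ and $ky\in\HMea(E)$ with witness $z=ky$. Your part (2) uses the alternative witness $z=(n_y-k)y$, but this is only a cosmetic variation of the same idea, since $2k\le n_y$ already makes the paper's witness $ky$ work directly.
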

\begin{proof}
In either case, \((2k)y\) exists in \(E\).
Therefore \(ky\leq (ky)'\) and consequently \(ky\in \HMea(E)\).
\end{proof}
\begin{proposition}\label{archim}
Let \(E\) be an effect algebra. The following conditions are equivalent.
\begin{enumerate}
\item \(E\) is Archimedean;
\item \(\Mea(E)\) is Archimedean;
\item \(\HMea(E)\) is Archimedean.
\end{enumerate}
\end{proposition}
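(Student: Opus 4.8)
The plan is to prove the cycle of implications
$(1)\Rightarrow(2)\Rightarrow(3)\Rightarrow(1)$, exploiting that $\HMea(E)\subseteq\Mea(E)\subseteq E$ and that both $\Mea(E)$ and $\HMea(E)$ carry the partial operation inherited from $E$. The implications $(1)\Rightarrow(2)$ and $(2)\Rightarrow(3)$ are the easy direction: if $x\in\HMea(E)$ then $x\in\Mea(E)\subseteq E$, and for such an $x$ the multiple $kx$ is computed by the \emph{same} orthosummation in all three structures (a downset closed under the relevant partial sums inherits $\oplus$ literally). Hence $\ord(x)$ is the same whether computed in $E$, in $\Mea(E)$, or in $\HMea(E)$, provided $x$ lies in the smaller structure. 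So if $E$ is Archimedean, then $\ord(x)<\infty$ for every $x\in\Mea(E)$ as well, giving (2); and likewise (2) gives (3). I would spell this out in one or two sentences, pointing to the remark just before the statement that $\Mea(E)$ and $\HMea(E)$ are generalized effect algebras under the restricted operation.

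The substantive implication is $(3)\Rightarrow(1)$: assuming $\HMea(E)$ is Archimedean, I must show $\ord(y)<\infty$ for \emph{every} $y\in E$, not merely for hypermeager $y$. Here Lemma~\ref{ordinffin} is exactly the bridge. Suppose toward a contradiction that $\ord(y)=\infty$ in $E$. By Lemma~\ref{ordinffin}(1), $\{ky\mid k\in\mathbb N\}\subseteq\HMea(E)$; in particular $y\in\HMea(E)$, and moreover $(2k)y$ exists in $E$ for every $k$, which forces $k(y)$ — interpreted inside $\HMea(E)$ — to exist for all $k$ (each partial sum $ky$ lands in $\HMea(E)$ by the lemma, and the sums agree with those in $E$). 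Therefore $\ord_{\HMea(E)}(y)=\infty$, contradicting that $\HMea(E)$ is Archimedean. Hence $\ord(y)<\infty$ for all $y\in E$, i.e. $E$ is Archimedean.

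The one point deserving care — and what I expect to be the main obstacle — is the claim that $\ord$ is computed consistently across the three structures, i.e. that the existence of $ky$ in $\HMea(E)$ is equivalent to its existence in $E$ together with all the intermediate partial sums staying hypermeager. In the direction needed for $(3)\Rightarrow(1)$ this is precisely what Lemma~\ref{ordinffin}(2) guarantees in the finite-order case and Lemma~\ref{ordinffin}(1) in the infinite-order case: when $(2k)y$ exists in $E$ we get $jy\leq(jy)'$ for all $j\le k$, so each $jy$ is hypermeager and the chain of orthosums witnessing $ky$ in $E$ is literally a chain of orthosums in $\HMea(E)$. Thus if $\ord_E(y)\geq 2k$ then $\ord_{\HMea(E)}(y)\geq k$, and letting $k\to\infty$ converts $\ord_E(y)=\infty$ into $\ord_{\HMea(E)}(y)=\infty$. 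Symmetrically, since $\HMea(E)\subseteq\Mea(E)\subseteq E$, an orthogonal multiple existing in a substructure exists in the overstructure, so no order can increase when passing to a larger structure; this handles the easy implications uniformly. Assembling these observations closes the cycle.
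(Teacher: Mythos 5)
Your proposal is correct and follows essentially the same route as the paper: the cycle $(1)\Rightarrow(2)\Rightarrow(3)\Rightarrow(1)$, with the first two implications holding a fortiori because any multiple existing in the substructure exists in $E$, and the key step $(3)\Rightarrow(1)$ obtained from Lemma~\ref{ordinffin}(1) exactly as in the paper's proof. Two small caveats: your claim that $\ord$ is literally the same in $E$, $\Mea(E)$ and $\HMea(E)$ is false in general (e.g.\ $kx$ may be sharp, so it exists in $E$ but not in $\Mea(E)$; in a finite chain the orders genuinely drop), but only the inequality $\ord_{\HMea(E)}(x)\le\ord_{\Mea(E)}(x)\le\ord_{E}(x)$ is needed, and your containment remark already justifies that. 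Also, in $(3)\Rightarrow(1)$ you should assume $y\neq 0$, as the paper does, since $\ord(0)=\infty$ in any of these structures.
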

\begin{proof}
(i) \(\implies\) (ii)
If \(E\) is Archimedean, \(\Mea(E)\) is Archimedean a fortiori.
\\
(ii) \(\implies\) (iii)
If \(\Mea(E)\) is Archimedean, \(\HMea(E)\) is Archimedean a fortiori.
\\
(iii) \(\implies\) (i)
Let \(\HMea(E)\) be Archimedean. Suppose \(\operatorname{ord}(y)=\infty\)
in \(E\) where \(y\ne 0\).
By Lemma~\ref{ordinffin}, \(\{ky\mid k\in \mathbb N\}\subseteq \HMea(E)\),
which contradicts the assumption.
\end{proof}

{\renewcommand{\labelenumi}{{\normalfont  (\roman{enumi})}}
\begin{statement}\label{jmpy2} {\rm\cite[Lemma 2.4]{niepa}} Let 
$E$ be an effect algebra 
in which $\Sh(E)$ is a sub-effect algebra of $E$ and 
let $x\in \Mea(E)$ such that $\wwidehat{x}$  exists. Then 
\begin{enumerate}
\settowidth{\leftmargin}{(iiiii)}
\settowidth{\labelwidth}{(iii)}
\settowidth{\itemindent}{(ii)}
\item $\wwidehat{x}\ominus x\in \Mea(E)$.
\item If $y\in \Mea(E)$ such that $x\oplus y$ %
exists and $x\oplus y=z\in\Sh(E)$ then  $\wwidehat{x}=z$. 
\end{enumerate}
\end{statement}}

{\renewcommand{\labelenumi}{{\normalfont  (\roman{enumi})}}
\begin{statement}\label{jpy2} {\rm\cite[Lemma 2.5]{niepa}} Let $E$ be an effect algebra 
in which $\Sh(E)$ is a sub-effect algebra of $E$ and 
let $x\in E$ such that $\widetilde{x}$  
exists. Then 
${x\ominus \widetilde{x}}\in  \Mea(E)$ and 
$x=\widetilde{x}\oplus {(x\ominus \widetilde{x})}$ is 
the unique decomposition $x = x_S \oplus x_M$, 
where $x_S\in\Sh(E)$ and $x_M \in \Mea(E)$. Moreover, 
$x_S\wedge x_M=0$ and if $E$ is a lattice effect algebra then 
 $x = x_S \vee x_M$. 
\end{statement}}

As proved in \cite{cattaneo}, 
$\Sh(E)$ is always a sub-effect algebra in 
a sharply dominating  effect algebra $E$.

\begin{corollary}\label{gejza}{\rm{}\cite[Proposition 15]{jenca}}
Let $E$ be a sharply dominating  effect algebra. 
Then every $x \in E$ has a
unique decomposition $x = x_S \oplus x_M$, where $x_S\in\Sh(E)$ and $x_M \in \Mea(E)$, 
namely $x=\widetilde{x}\oplus {(x\ominus \widetilde{x})}$.
\end{corollary}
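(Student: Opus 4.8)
The plan is to derive Corollary~\ref{gejza} directly from Statement~\ref{jpy2} together with the fact, recorded just above the corollary, that $\Sh(E)$ is a sub-effect algebra whenever $E$ is sharply dominating (this is the result of \cite{cattaneo}). Once we know $\Sh(E)$ is a sub-effect algebra, the hypotheses of Statement~\ref{jpy2} are met, so the only thing left to check is that $\widetilde{x}$ exists for \emph{every} $x\in E$, not merely those for which it is assumed to exist.

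First I would recall the characterisation of sharp dominance given in the paragraph following the definition: an effect algebra $E$ is sharply dominating if and only if for every $x\in E$ there is a greatest sharp element $\widetilde{x}$ below $x$ (equivalently, a smallest sharp element $\wwidehat{x}$ above $x$). Thus, under the hypothesis that $E$ is sharply dominating, the existence of $\widetilde{x}$ is automatic for all $x\in E$. Combined with the theorem of \cite{cattaneo} that $\Sh(E)$ is then a sub-effect algebra, every instance of Statement~\ref{jpy2} applies.

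Then I would simply invoke Statement~\ref{jpy2}: for an arbitrary $x\in E$ we get that $x\ominus\widetilde{x}\in\Mea(E)$ and that $x=\widetilde{x}\oplus(x\ominus\widetilde{x})$ is the \emph{unique} decomposition of $x$ of the form $x=x_S\oplus x_M$ with $x_S\in\Sh(E)$ and $x_M\in\Mea(E)$. Setting $x_S=\widetilde{x}$ and $x_M=x\ominus\widetilde{x}$ yields exactly the assertion of the corollary. There is essentially no obstacle here: the corollary is a packaging of the earlier lemma once the missing hypothesis ($\Sh(E)$ a sub-effect algebra) is supplied by \cite{cattaneo}, and the only point needing a word of justification is why $\widetilde{x}$ exists for every $x$, which is immediate from the equivalent formulations of sharp dominance noted above.

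If one preferred a self-contained argument for uniqueness rather than citing Statement~\ref{jpy2}, the key step would be: given any decomposition $x=x_S\oplus x_M$ with $x_S$ sharp and $x_M$ meager, one shows $x_S\le\widetilde{x}$ (since $x_S$ is a sharp element below $x$) and then that $\widetilde{x}\ominus x_S$ is a sharp element below the meager element $x_M$, forcing $\widetilde{x}\ominus x_S=0$, whence $x_S=\widetilde{x}$ and $x_M=x\ominus\widetilde{x}$ by cancellation (GE3); the one delicate point in that route is verifying $\widetilde{x}\ominus x_S\in\Sh(E)$, which uses that $\Sh(E)$ is a sub-effect algebra. But since Statement~\ref{jpy2} already records all of this, the cleanest presentation is the two-line deduction above.
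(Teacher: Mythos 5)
Your proposal is correct and follows exactly the route the paper intends: the corollary is stated right after the remark that, by \cite{cattaneo}, $\Sh(E)$ is a sub-effect algebra of any sharply dominating effect algebra, so that Statement~\ref{jpy2} applies to every $x\in E$ (with $\widetilde{x}$ existing by sharp dominance) and yields the unique decomposition $x=\widetilde{x}\oplus(x\ominus\widetilde{x})$. Your optional self-contained uniqueness argument is a fine addition but not needed; the two-line deduction is precisely the paper's reasoning.
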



\begin{statement}\label{gejzaoch}{\rm\cite[Corollary 14]{jenca}} Let $E$ be an orthocomplete 
homogeneous effect algebra. Then  $E$ is 
sharply dominating. 
\end{statement}

{
\begin{proposition}\label{modyjem} Let $E$ be a 
homogeneous effect algebra and $v\in E$.
The following conditions are equivalent.
\begin{enumerate}
\item $v\in \Sh(E)$;
\item $y\leq z$ whenever $w, y, z\in E$   such that $v=w\oplus z$,
$y\leq w'$ and $y\leq w$. 
\item $[0, w]\cap [0, w']=[0, w]\cap [0, v\ominus w]$ whenever $w\in E$ 
and $w\leq v$.
\end{enumerate}
\end{proposition}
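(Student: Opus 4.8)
The plan is to prove the cycle $(1)\Rightarrow(2)\Rightarrow(3)\Rightarrow(1)$, using homogeneity mainly in the first implication and Lemma~\ref{xshom} as the workhorse. Throughout, recall that $v\in\Sh(E)$ means $v\wedge v'=0$, and that for $w\le v$ one has $v\ominus w$ defined with $w\oplus(v\ominus w)=v$.

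For $(1)\Rightarrow(2)$: suppose $v\in\Sh(E)$ and let $w,y,z\in E$ with $v=w\oplus z$, $y\le w$ and $y\le w'$. The idea is to apply Lemma~\ref{xshom} to move $y$ past the sharp element $v$. Since $y\le w\le w\oplus z=v$ and also, because $z\le w'$ implies (from $y\le w$ and a compatibility/associativity manipulation) $y\le v'$, we can try to set up an inequality of the form $y\le w'\oplus(\text{something})\le y'$ or directly invoke homogeneity on $y\le z\oplus(v\ominus\cdots)$. More cleanly: from $y\le w'$ and $w\oplus z=v$ we get $w\oplus y\le w\oplus w'=1$ is automatic, but what we want is $w\oplus y\le v$, equivalently $y\le v\ominus w=z$. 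To get $w\oplus y\le v$: we have $w\le v$ and $y\le w'$; since $v\in\Sh(E)$, apply Lemma~\ref{xshom} with the sharp element $v$ in the role of ``$v_1$''. Concretely, $w\le 1=v\oplus v'$ with $v\in\Sh(E)$ forces (by Lemma~\ref{xshom}, noting $w\le w'$ may fail, so instead we argue on the interval) $w\wedge v'=0$; then $y\le w$ gives $y\wedge v'=0$, and combined with $y\le w'$... The clean route is: $y\le w'$, and $w'=z'\ominus\text{? }$; better yet, since $w\oplus z=v$, homogeneity applied to $y\le w\oplus z\le$ (need $\le y'$) splits $y=y_1\oplus y_2$ with $y_1\le w$, $y_2\le z$; but $y\le w$ already, so take $y_1=y$? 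That is not forced. I expect the actual argument splits $y$ against the decomposition $v=w\oplus z$ using homogeneity (the hypothesis $y\le w'$ is exactly what makes $y\le(w\oplus z)\oplus$-type bound available, after checking $w\oplus z\le y'$ follows from sharpness of $v$), obtains $y=y_1\oplus y_2$ with $y_1\le w$ and $y_2\le z$, and then shows $y_1\le w\wedge w'=0$ forcing $y=y_2\le z$. The verification that $v=w\oplus z\le y'$ is the one genuinely delicate point, and it is where $v\in\Sh(E)$ is consumed: $y\le w$ and $y\le w'$ give $y\le v'$ (since $y\le w'$ and $z\le w'$ together should yield $y\oplus z\le w'$... hence $y\le w'\ominus z = v'\ominus$? — this needs the identity $v'=w'\ominus z$ when $v=w\oplus z$, which holds in any effect algebra), so $y\le v'$, i.e. $v\le y'$.

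For $(2)\Rightarrow(3)$: fix $w\le v$. The inclusion $[0,w]\cap[0,v\ominus w]\subseteq[0,w]\cap[0,w']$ is general and easy: if $y\le w$ and $y\le v\ominus w$, then $w\oplus y\le w\oplus(v\ominus w)=v\le 1$ shows $y\le w'$. For the reverse inclusion, take $y\le w$ and $y\le w'$; writing $z=v\ominus w$ we have exactly the hypothesis of~(2), so $y\le z=v\ominus w$, giving $y\in[0,w]\cap[0,v\ominus w]$. This implication should be essentially immediate once~(2) is in hand.

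For $(3)\Rightarrow(1)$: apply~(3) with $w=v$ (legitimate since $v\le v$). Then $v\ominus v=0$, so the right-hand side is $[0,v]\cap[0,0]=\{0\}$, while the left-hand side is $[0,v]\cap[0,v']$. Equality forces $[0,v]\cap[0,v']=\{0\}$, i.e. $v\wedge v'=0$, i.e. $v\in\Sh(E)$. This is the slickest of the three and needs no homogeneity at all.

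The main obstacle is the bookkeeping in $(1)\Rightarrow(2)$: one must verify carefully, using only effect-algebra axioms, that the hypotheses $y\le w$, $y\le w'$ together with $v=w\oplus z$ and $v\in\Sh(E)$ put $y$ and the pair $(w,z)$ into a configuration to which homogeneity (or Lemma~\ref{xshom}) applies — in particular establishing $y\le v'$ (equivalently $v\le y'$) so that the ``sandwich'' $y\le w\oplus z\le y'$ is valid. I expect this to hinge on the identity $v'=w'\ominus z$ for $v=w\oplus z$ and on Lemma~\ref{xshom} to kill the $w$-component of the resulting decomposition of $y$.
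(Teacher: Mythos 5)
Your implications (2)$\Rightarrow$(3) and (3)$\Rightarrow$(1) are correct and essentially identical to the paper's. The genuine gap is in (1)$\Rightarrow$(2). Your plan hinges on establishing $y\le v'$ (equivalently $v\le y'$) so that homogeneity can be applied to the sandwich $y\le w\oplus z\le y'$, and you propose to get it from ``$y\le w'$ and $z\le w'$ yield $y\oplus z\le w'$, hence $y\le w'\ominus z=v'$''. That inference is invalid: it would require $w'$ to be principal, which is not assumed; worse, the intermediate claim $y\le v'$ is simply false in general. Take $E=[0,1]^2$ (an MV-effect algebra, hence homogeneous), $v=(1,1)\in\Sh(E)$, $w=(\tfrac12,1)$, $z=(\tfrac12,0)$, $y=(\tfrac12,0)$. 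Then $v=w\oplus z$, $y\le w$, $y\le w'=(\tfrac12,0)$, and indeed $y\le z$ as the proposition predicts, but $y\oplus z=(1,0)\not\le w'$, $y\not\le v'=(0,0)$ and $v\not\le y'=(\tfrac12,1)$, so the configuration needed for your homogeneity split never arises. (Had $y\le v'$ held, then with $y\le w\le v$ and sharpness of $v$ you would get $y=0$, visibly stronger than the statement.) The fallback step in your sketch, killing the $w$-component of a hypothetical decomposition $y=y_1\oplus y_2$ via $y_1\le w\wedge w'=0$, also fails because $w$ need not be sharp: in the example $w\wedge w'=(\tfrac12,0)\ne 0$.

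What the paper does instead is to use homogeneity only through the block structure: the finite orthogonal system $\{y,\ w\ominus y,\ z,\ v'\}$ is compatible, hence contained in some block $B$ (Statement \ref{gejzasum}(v)); then $w,w',v\in B$ and $v\in\Sh(E)\cap B=\C(B)$ (Statement \ref{gejzasum}(vii)), so the distributivity of central elements (Statement \ref{gejzapulm}) applied to $1=w\oplus w'$ gives $v=(v\wedge_B w)\oplus(v\wedge_B w')=w\oplus(v\wedge_B w')$, i.e.\ $z=v\wedge_B w'$, whence $y\le z$ because $y\in B$, $y\le v$ and $y\le w'$. Some such detour through centrality of $v$ inside a block (or an equivalent device) is needed; the direct homogeneity argument you outline cannot be repaired as stated.
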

\begin{proof}
(i) $\implies$ (ii): 
Evidently, there is a block, say $B$, such that it contains the following 
orthogonal system $\{y, w\ominus y, z, 1\ominus v\}$. Hence 
$B$ contains also $w$, $w'$ and $v\in \C(B)$. Since 
$1=w\oplus w'$ we obtain by 
Statement \ref{gejzapulm}, (ii) that 
$v=v\wedge_B w\oplus %
v\wedge_B w'=%
w\oplus %
v\wedge_B w'$. Subtracting $w$ we obtain 
$z=v\wedge_B w'$. Hence
 $y\leq w \leq v$ and $y\leq w'$ yields that $y\leq z$.
 \newline
(ii) $\implies$  (iii):
Clearly, $[0, w]\cap [0, v\ominus w] \subseteq [0, w]\cap [0, w']$. The 
other inclusion is a direct reformulation of (ii).
\newline
(iii) $\implies$  (i)
Let $y\in [0,v]\cap[0,v']$. Then   from (iii) we have that 
$y\in [0,v]\cap[0, v\ominus v]=\{ 0\}$. Immediately, $y=0$ and 
$v$ is sharp.
\end{proof}
}

\begin{corollary}{\rm\cite[Lemma 2.12]{niepa2}}\label{soucethat}
Let $E$ be a homogeneous effect algebra, and $y\in E$ and $w\in \Sh(E)$ for which
$y\leq w$ and $ky$ exists. It holds $ky\leq w$.
\end{corollary}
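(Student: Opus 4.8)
The plan is to...

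The plan is to argue by induction on the multiplicity, using Proposition~\ref{modyjem}(ii) as the main tool. Write $k$ for the positive integer in question. It suffices to prove, by induction on $j$, that $jy\le w$ for every $j\in\{1,\dots,k\}$; each such $jy$ is a legitimate element, since the existence of $ky$ forces the existence of all the partial sums $jy$ with $j\le k$. The base case $j=1$ is just the hypothesis $y\le w$.

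For the inductive step, assume $2\le j\le k$ and $(j-1)y\le w$. I would first record two facts. Since $jy=(j-1)y\oplus y$ exists, we have $y\le((j-1)y)'$. Since $(j-1)y$ is the orthogonal sum of $j-1\ge 1$ copies of $y$, it dominates one copy, so $y\le(j-1)y$. By the induction hypothesis the element $z:=w\ominus(j-1)y$ is defined and $w=(j-1)y\oplus z$. Now apply Proposition~\ref{modyjem}(ii) to the sharp element $w$, taking $(j-1)y$, $y$, and $z$ in the roles of the three bound variables appearing there: its three hypotheses are precisely $w=(j-1)y\oplus z$, $y\le((j-1)y)'$, and $y\le(j-1)y$, all just verified, so it yields $y\le z$. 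Monotonicity of $\oplus$ then gives $jy=(j-1)y\oplus y\le(j-1)y\oplus z=w$, which closes the induction.

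The one point that needs care is the bookkeeping in the application of Proposition~\ref{modyjem}(ii) — in particular recognising that $y$ must be shown to lie below \emph{both} $(j-1)y$ and its orthosupplement $((j-1)y)'$, the first of these being the small remark that an orthogonal sum dominates each of its summands and the second being merely the assumed existence of $jy$. Apart from that the step is routine: all the substantive content, and in particular the use of homogeneity, is already absorbed into Proposition~\ref{modyjem}, so I do not foresee any genuine obstacle.
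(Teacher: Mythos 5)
Your induction is correct: at each step the hypotheses of Proposition~\ref{modyjem}(ii) (with $v=w$ sharp, the decomposition $w=(j-1)y\oplus\bigl(w\ominus(j-1)y\bigr)$, and $y\le (j-1)y$, $y\le((j-1)y)'$) are verified exactly as you state, and monotonicity of $\oplus$ closes the step. The paper itself gives no proof here (it cites an external lemma), but the statement is placed as a corollary of Proposition~\ref{modyjem}, and your argument is precisely that intended derivation.
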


\begin{corollary}\label{wsmodyjem} Let $E$ be a 
homogeneous effect algebra and let $x, y\in E$  such that $\widehat{x}$ exists, 
$y\leq (\widehat{x}\ominus x)'$ and $y\leq \widehat{x}\ominus x$. 
Then $y\leq x$. 
\end{corollary}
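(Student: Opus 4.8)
The plan is to obtain this as an immediate instance of the characterization of sharp elements in Proposition~\ref{modyjem}. Since $\widehat{x}$ is assumed to exist, by definition it is the least sharp element of $E$ above $x$; in particular $\widehat{x}\in\Sh(E)$ and $x\le\widehat{x}$, so $\widehat{x}\ominus x$ is defined and $\widehat{x}=(\widehat{x}\ominus x)\oplus x$.

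Now apply the implication (i)$\implies$(ii) of Proposition~\ref{modyjem} to the sharp element $v:=\widehat{x}$, using the decomposition $v=w\oplus z$ with $w:=\widehat{x}\ominus x$ and $z:=x$. The two hypotheses of the present corollary are exactly $y\le w'=(\widehat{x}\ominus x)'$ and $y\le w=\widehat{x}\ominus x$, which are precisely the premises required by clause (ii); hence that clause yields $y\le z=x$, as claimed.

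There is essentially no obstacle here beyond aligning the notation: the only points to verify are that $\widehat{x}\in\Sh(E)$ (immediate from the hypothesis that $\widehat{x}$ exists as the smallest sharp element above $x$) and that the orthosum $\widehat{x}=(\widehat{x}\ominus x)\oplus x$ is legitimate (immediate from $x\le\widehat{x}$), while homogeneity of $E$ enters only through the quoted proposition. One could alternatively argue directly by passing to a block $B$ containing the orthogonal system $\{\,y,\ (\widehat{x}\ominus x)\ominus y,\ x,\ \widehat{x}'\,\}$, noting $\widehat{x}\in\C(B)$ and invoking Statement~\ref{gejzapulm}, but routing through Proposition~\ref{modyjem} is the shortest path and keeps the argument self-contained.
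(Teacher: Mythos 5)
Your proposal is correct and is exactly the paper's argument: the paper also obtains the corollary by applying Proposition~\ref{modyjem} (i)$\implies$(ii) with $v=\widehat{x}$, $w=\widehat{x}\ominus x$, $z=x$. The extra verifications you mention (sharpness of $\widehat{x}$ and the decomposition $\widehat{x}=(\widehat{x}\ominus x)\oplus x$) are fine and routine.
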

\begin{proof} It is enough to put in Proposition \ref{modyjem} 
$v=\widehat{x}$, $w=\widehat{x}\ominus x$ and $z=x$. 
\end{proof}

{
\begin{statement}{\rm\cite[Lemma  1.16.]{niepa2}}\label{hatrozdilu}
Let $E$ be a sharply dominating effect algebra and let $x\in E$. Then 
\begin{equation*}
\wwidehat{x\ominus\widetilde{x}}=
\wwidehat{\wwidehat{x}\ominus x}=
\wwidehat{x}\ominus \widetilde{x} .
\end{equation*}
\end{statement}

{\renewcommand{\labelenumi}{{\normalfont  (\roman{enumi})}}
\begin{lemma}\label{suplem}
Let $E$ be an effect algebra and let $x\in E$. Then 
\begin{enumerate}
\settowidth{\leftmargin}{(iiiii)}
\settowidth{\labelwidth}{(iii)}
\settowidth{\itemindent}{(ii)}
\item If\/ $\widehat{x}$ exists then \(\widetilde{(x')}\) exists and
\begin{equation*}
\wwidehat{x}\ominus x=%
 x'\ominus (\wwidehat{x})'=x'\ominus\widetilde{(x')}.
\end{equation*}
\item If\/ $\widetilde{x}$ exists then \(\wwidehat{(x')}\) exists and
\begin{equation*}
x \ominus \widetilde{x}=(\widetilde{x})'\ominus x'=\wwidehat{(x')}\ominus x'.
\end{equation*}
\end{enumerate}
\end{lemma}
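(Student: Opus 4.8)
The two parts are mirror images of each other under the involution $x\mapsto x'$, so I would prove (i) carefully and then obtain (ii) by applying (i) to $x'$ (using $(x')'=x$). For part (i), assume $\wwidehat{x}$ exists. The first task is to show that $\widetilde{(x')}$ exists and equals $(\wwidehat{x})'$. The natural candidate is $(\wwidehat{x})'$: since $\wwidehat{x}\geq x$ we get $(\wwidehat{x})'\leq x'$, and since $\Sh(E)$ is closed under the complement $'$ (by axiom (Eiii) and the definition of sharpness, $s\wedge s'=0$ is symmetric in $s,s'$), we have $(\wwidehat{x})'\in\Sh(E)$. To see it is the \emph{greatest} sharp element below $x'$, take any $u\in\Sh(E)$ with $u\leq x'$; then $u'\geq x$ and $u'\in\Sh(E)$, so by the defining property of $\wwidehat{x}$ we get $\wwidehat{x}\leq u'$, hence $u\leq(\wwidehat{x})'$. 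Thus $\widetilde{(x')}=(\wwidehat{x})'$.

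The remaining content of (i) is the chain of three equal differences. The equality $x'\ominus(\wwidehat{x})'=x'\ominus\widetilde{(x')}$ is then immediate from the previous paragraph. For $\wwidehat{x}\ominus x=x'\ominus(\wwidehat{x})'$, I would argue directly from the definition of $\ominus$: writing $\wwidehat{x}\ominus x=z$ means $x\oplus z=\wwidehat{x}$, equivalently (applying $'$ and using associativity/commutativity, i.e. (Ei)–(Eiii)) $z\oplus(\wwidehat{x})' = x'$ whenever the relevant sums are defined; so $z=x'\ominus(\wwidehat{x})'$. Concretely: from $x\oplus(\wwidehat{x}\ominus x)=\wwidehat{x}$ and $\wwidehat{x}\oplus(\wwidehat{x})'=1=x\oplus x'$ one gets $x\oplus(\wwidehat{x}\ominus x)\oplus(\wwidehat{x})'=x\oplus x'$, and cancelling $x$ (generalized effect algebras have cancellation, (GE3), and every effect algebra is one) yields $(\wwidehat{x}\ominus x)\oplus(\wwidehat{x})'=x'$, which is exactly the assertion $\wwidehat{x}\ominus x = x'\ominus(\wwidehat{x})'$.

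For part (ii), assume $\widetilde{x}$ exists. Apply part (i) with $x$ replaced by $x'$: the hypothesis becomes ``$\wwidehat{(x')}$ exists'', which is what we must also produce. Here I note that $\widetilde{x}$ existing forces $\wwidehat{(x')}$ to exist and equal $(\widetilde{x})'$, by the same duality argument as in the first paragraph (a greatest sharp element below $x$ corresponds, under $'$, to a smallest sharp element above $x'$). Feeding $x'$ into part (i) then gives $\wwidehat{(x')}\ominus x' = (x')'\ominus(\wwidehat{(x')})' = x\ominus(\widetilde{x})'{}'$... more carefully, part (i) applied to $x'$ reads $\wwidehat{(x')}\ominus x' = x\ominus(\wwidehat{(x')})'$, and substituting $\wwidehat{(x')}=(\widetilde{x})'$ gives $\wwidehat{(x')}\ominus x' = x\ominus\widetilde{x}$, which is the desired chain once we also record $x\ominus\widetilde{x}=(\widetilde{x})'\ominus x'$ (the same cancellation computation as above, now with $\widetilde{x}\leq x$).

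\textbf{Main obstacle.} There is no deep obstacle; the only place demanding care is the bookkeeping that $\Sh(E)$ is stable under $'$ and that the ``smallest sharp element above'' and ``greatest sharp element below'' notions are exchanged by $'$, so that the existence hypotheses in (i) and (ii) match up correctly. Everything else is a routine application of cancellation (GE3) and the effect-algebra axioms (Ei)–(Eiii); in particular no homogeneity, orthocompleteness, or sharp domination is needed — only that $x\mapsto x'$ is an order-reversing involution preserving sharpness.
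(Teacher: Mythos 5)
Your proof is correct and is exactly the routine argument the paper leaves out by declaring the lemma ``Transparent'': the order-reversing involution $x\mapsto x'$ preserves sharpness and exchanges the roles of $\wwidehat{\phantom{x}}$ and $\widetilde{\phantom{x}}$, and the equality of differences follows by cancellation. Nothing further is needed.
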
}

\begin{proof}
Transparent.
\end{proof}
}

{\renewcommand{\labelenumi}{{\normalfont  (\roman{enumi})}}
\begin{lemma}\label{dusuplem}
Let $E$ be an effect algebra and let $x, y\in E$.
\begin{enumerate}
\settowidth{\leftmargin}{(iiiii)}
\settowidth{\labelwidth}{(iii)}
\settowidth{\itemindent}{(ii)}
\item If\/ $\widehat{x}$ exists then 
\begin{equation*}
y\leq \wwidehat{x}\ominus x%
\qquad \text{if and only if}\qquad y\leq x'\quad \text{and}\quad \wwidehat{x\oplus y}=\wwidehat{x}.
\end{equation*}
\item If\/ $\widetilde{x}$ exists then 
\begin{equation*}
y\leq x \ominus \widetilde{x} %
\qquad \text{if and only if}\qquad y\leq x \quad \text{and}\quad \wwidetilde{x\ominus y}=\wwidetilde{x}.
\end{equation*}
\end{enumerate}
\end{lemma}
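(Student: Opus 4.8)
The lemma is stated for both a $\wwidehat{\phantom{x}}$-version (i) and a $\wwidetilde{\phantom{x}}$-version (ii), and these two are exchanged under the complementation $x\mapsto x'$, $y\mapsto$ (appropriate complement) by Lemma~\ref{suplem}. So the plan is to prove (i) carefully and then deduce (ii) by dualizing: if $\widetilde x$ exists then $\wwidehat{(x')}$ exists by Lemma~\ref{suplem}(ii), and $x\ominus\widetilde x = \wwidehat{(x')}\ominus x'$; applying (i) to $x'$ in place of $x$ turns the condition $y\le x\ominus\widetilde x$ into $y\le x'{}'=x$ together with $\wwidehat{x'\oplus y}=\wwidehat{x'}$, and one checks via Lemma~\ref{suplem} again that $\wwidehat{x'\oplus y}=\wwidehat{(x\ominus y)'}$ corresponds to $\wwidetilde{x\ominus y}=\wwidetilde{x}$. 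I expect this dualization bookkeeping to be the fiddliest part, but purely routine.

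For (i), the ``only if'' direction: assume $y\le\wwidehat{x}\ominus x$. Then trivially $y\le\wwidehat{x}\ominus x\le x'$ (since $x\oplus(\wwidehat x\ominus x)=\wwidehat x\le 1$, so $\wwidehat x\ominus x\le x'$), and $x\oplus y$ exists and $x\oplus y\le\wwidehat x\in\Sh(E)$, so $\wwidehat{x\oplus y}\le\wwidehat x$. For the reverse inequality $\wwidehat x\le\wwidehat{x\oplus y}$, note $x\le x\oplus y\le\wwidehat{x\oplus y}$, and $\wwidehat{x\oplus y}\in\Sh(E)$, whence by minimality of $\wwidehat x$ over $x$ we get $\wwidehat x\le\wwidehat{x\oplus y}$. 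Hence $\wwidehat{x\oplus y}=\wwidehat x$.

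For the ``if'' direction of (i): assume $y\le x'$ and $\wwidehat{x\oplus y}=\wwidehat x$. We must show $y\le\wwidehat x\ominus x$, i.e. (since $x\oplus y$ and $x\oplus(\wwidehat x\ominus x)=\wwidehat x$ both exist) that $x\oplus y\le\wwidehat x$; but $x\oplus y\le\wwidehat{x\oplus y}=\wwidehat x$ is immediate. Wait — that shows $x\oplus y\le\wwidehat x$, and subtracting $x$ gives exactly $y\le\wwidehat x\ominus x$. So both directions of (i) are in fact short, using only: $\wwidehat x$ is the least sharp element above $x$, that $z\le z'$-type manipulations with $\oplus/\ominus$, and monotonicity of $z\mapsto\wwidehat z$ (which follows from minimality). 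No appeal to homogeneity is needed here.

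The only genuine care points are: (a) verifying that $x\oplus y$ is indeed defined under the hypotheses — in ``only if'' it follows from $y\le\wwidehat x\ominus x$ and associativity; in ``if'' it follows from $y\le x'$; (b) the cancellation step ``$x\oplus y\le\wwidehat x\Rightarrow y\le\wwidehat x\ominus x$'', which is the standard fact that in an effect algebra $a\oplus b$ defined and $a\oplus b\le c$ implies $b\le c\ominus a$ (apply RDP-free reasoning: $c=(a\oplus b)\oplus(c\ominus(a\oplus b))=a\oplus(b\oplus(c\ominus(a\oplus b)))$, so $c\ominus a=b\oplus(\cdots)\ge b$); and (c) the dualization of (ii), where I would write $\wwidehat{x'\oplus y}=\wwidehat{(x\ominus y)'}$ using $x'\oplus y=(x\ominus y)'$ (valid since $y\le x$), and $\wwidehat{(x\ominus y)'}=\wwidehat{x'}$ iff $\wwidetilde{x\ominus y}=\wwidetilde x$ by Lemma~\ref{suplem}. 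I therefore expect the write-up to be essentially mechanical, with the $x\mapsto x'$ translation in part (ii) being the main place to be careful about which complement attaches to which variable.
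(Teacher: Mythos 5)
Your proof is correct; the paper itself offers no argument for this lemma (its proof is the single word ``Transparent''), and your minimality-plus-cancellation argument for (i) together with the $x\mapsto x'$ dualization via Lemma~\ref{suplem} for (ii) is precisely the routine verification the authors are alluding to. One cosmetic repair: in the ``only if'' direction of (i), rather than writing $\wwidehat{x\oplus y}\le\wwidehat{x}$ (which presupposes that $\wwidehat{x\oplus y}$ exists), phrase it as: $\wwidehat{x}$ is a sharp upper bound of $x\oplus y$, and every sharp upper bound of $x\oplus y$ lies above $x$ and hence above $\wwidehat{x}$, so $\wwidehat{x\oplus y}$ exists and equals $\wwidehat{x}$.
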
}

\begin{proof}
Transparent.
\end{proof}
}

{\renewcommand{\labelenumi}{{\normalfont  (\roman{enumi})}}
\begin{lemma}\label{xssuplem}
Let $E$ be a homogeneous effect algebra and let $x\in E$. 
\begin{enumerate}
\settowidth{\leftmargin}{(iiiii)}
\settowidth{\labelwidth}{(iii)}
\settowidth{\itemindent}{(ii)}
\item If\/ $\widetilde{x}$ exists then 
\begin{equation*}
[0,x]\cap [0, x']=[0,x\ominus \widetilde{x}]\cap [0, x']=%
[0,x\ominus \widetilde{x}]\cap [0, (x\ominus \widetilde{x})'].
\end{equation*}
\item If\/ $\widehat{x}$ exists then 
\begin{equation*}
[0,x]\cap [0, x']=[0,x]\cap [0, \wwidehat{x}\ominus x]=%
[0,(\wwidehat{x}\ominus x)']\cap [0, \wwidehat{x}\ominus x].
\end{equation*}
\item If both\/ $\widetilde{x}$ and $\widehat{x}$ exist then 
\begin{equation*}
[0,x]\cap [0, x']=[0,x\ominus \widetilde{x}]\cap [0, \wwidehat{x}\ominus x].
\end{equation*}
\end{enumerate}
\end{lemma}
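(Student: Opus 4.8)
The three identities in Lemma~\ref{xssuplem} are all of the form ``$[0,x]\cap[0,x']$ equals an intersection obtained by replacing $x$ (or $x'$) by its meager part $x\ominus\widetilde{x}$ (or, dually, by $\wwidehat{x}\ominus x$).'' The plan is to prove (i), obtain (ii) by applying (i) to $x'$, and then combine the two to get (iii).

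For (i): the first equality $[0,x]\cap[0,x']=[0,x\ominus\widetilde{x}]\cap[0,x']$ is proved by double inclusion. The inclusion $\supseteq$ is trivial since $x\ominus\widetilde{x}\le x$. For $\subseteq$, take $y$ with $y\le x$ and $y\le x'$; I must show $y\le x\ominus\widetilde{x}$. Here is where homogeneity enters via Corollary~\ref{wsmodyjem} (or Lemma~\ref{dusuplem}(ii)): by Lemma~\ref{dusuplem}(ii) it suffices to check $y\le x$ (already have it) and $\wwidetilde{x\ominus y}=\widetilde{x}$. Since $y\le x'$ we have $y\le\widetilde{x}{}'$ as well (because $\widetilde{x}\le x$ forces $x'\le\widetilde{x}{}'$), so $\widetilde{x}\oplus y$ exists and $\widetilde{x}\le x\ominus y$; hence $\widetilde{x}\le\wwidetilde{x\ominus y}$. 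Conversely $\wwidetilde{x\ominus y}\le x\ominus y\le x$, and $\wwidetilde{x\ominus y}\in\Sh(E)$, so by definition of $\widetilde{x}$ we get $\wwidetilde{x\ominus y}\le\widetilde{x}$. Thus $\wwidetilde{x\ominus y}=\widetilde{x}$ and Lemma~\ref{dusuplem}(ii) delivers $y\le x\ominus\widetilde{x}$. For the second equality in (i), apply Proposition~\ref{modyjem} with the sharp element $v=\widetilde{x}$, $w=$ (anything with $w\le\widetilde{x}$)\,---\,actually the cleanest route is: $\widetilde{x}$ is sharp, and one checks directly that $[0,x\ominus\widetilde{x}]\cap[0,x']=[0,x\ominus\widetilde{x}]\cap[0,(x\ominus\widetilde{x})']$. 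Since $x\ominus\widetilde{x}\le x\le\widetilde{x}{}'$ we have $x'\le(x\ominus\widetilde{x})'$, so $\supseteq$ is automatic; for $\subseteq$, if $y\le x\ominus\widetilde{x}$ and $y\le(x\ominus\widetilde{x})'$ then $y\le x\ominus\widetilde{x}\le x'$? No\,---\,this needs care. Instead use Lemma~\ref{xshom}: $\widetilde{x}\in\Sh(E)$ and $y\le x\ominus\widetilde{x}\le(x\ominus\widetilde{x})'$ gives, after noting $x=\widetilde{x}\oplus(x\ominus\widetilde{x})$, that $y\le(x\ominus\widetilde{x})\le x'$; more directly, apply Lemma~\ref{xshom} to the triple $u=y$, $v_1=\widetilde{x}$, $v_2=x\ominus\widetilde{x}$ if $y\le\widetilde{x}\oplus(x\ominus\widetilde{x})=x\le y'$, concluding $y\le x\ominus\widetilde{x}$ and $y\wedge\widetilde{x}=0$\,---\,this is the mechanism that moves between $[0,x']$ and $[0,(x\ominus\widetilde{x})']$.

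Part (ii) follows from (i) applied to $x'$ in place of $x$: by Lemma~\ref{suplem}(i) the hypothesis that $\wwidehat{x}$ exists is equivalent to $\wwidetilde{(x')}$ existing, and $\wwidehat{x}\ominus x=x'\ominus\wwidetilde{(x')}$; substituting $x\rightsquigarrow x'$ into (i) and rewriting $(x')'=x$ turns the three terms of (i) into the three terms of (ii). Part (iii): start from (i), which gives $[0,x]\cap[0,x']=[0,x\ominus\widetilde{x}]\cap[0,(x\ominus\widetilde{x})']$; now apply the \emph{second} equality of (ii) with the roles reorganized, or more symmetrically, note that $x\ominus\widetilde{x}\in\Mea(E)$ is meager so $\wwidetilde{x\ominus\widetilde{x}}=0$, and by Statement~\ref{hatrozdilu} we have $\wwidehat{x\ominus\widetilde{x}}=\wwidehat{x}\ominus\widetilde{x}$; then apply part (ii) to the element $x\ominus\widetilde{x}$, whose smallest sharp majorant is $\wwidehat{x}\ominus\widetilde{x}$, giving $(\wwidehat{x}\ominus\widetilde{x})\ominus(x\ominus\widetilde{x})=\wwidehat{x}\ominus x$, so that $[0,x\ominus\widetilde{x}]\cap[0,(x\ominus\widetilde{x})']=[0,x\ominus\widetilde{x}]\cap[0,\wwidehat{x}\ominus x]$. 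Chaining the two equalities yields (iii).

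\textbf{Main obstacle.} The routine-looking inclusions $[0,p]\cap[0,q']\subseteq[0,p]\cap[0,p']$ (and its reverse) where $p=x\ominus\widetilde{x}$, $q=x$ are \emph{not} purely order-theoretic\,---\,they genuinely use homogeneity, and the crux is recognizing that every $y$ in the smaller-looking interval-intersection is automatically below the meager part. I expect the delicate point to be the bookkeeping in part~(iii): one must correctly identify the sharp closure of the meager element $x\ominus\widetilde{x}$ (namely $\wwidehat{x}\ominus\widetilde{x}$, via Statement~\ref{hatrozdilu}) and verify that subtracting $x\ominus\widetilde{x}$ from it returns exactly $\wwidehat{x}\ominus x$, i.e.\ that the two ``$\ominus$'' cancellations are compatible; this is where a sign error or an existence-of-$\oplus$ oversight would most easily creep in. Everything else reduces to Lemma~\ref{xshom}, Lemma~\ref{dusuplem}, and the definitions of $\widetilde{x}$ and $\wwidehat{x}$.
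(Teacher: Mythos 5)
Your overall architecture (prove (i), get (ii) by swapping $x\leftrightarrow x'$, combine for (iii)) matches the paper's, but both key inclusions in your part (i) are argued circularly. For the first equality, your justification of $\widetilde{x}\le\wwidetilde{x\ominus y}$ rests on the unproved claim ``$\widetilde{x}\oplus y$ exists and $\widetilde{x}\le x\ominus y$'': the existence of $\widetilde{x}\oplus y$ does follow from $y\le x'\le\widetilde{x}{}'$, but $\widetilde{x}\le x\ominus y$ is literally equivalent to $y\le x\ominus\widetilde{x}$, the conclusion you are trying to reach (and in a general effect algebra $a,b\le x$ with $a\oplus b$ defined does not give $a\oplus b\le x$). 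Note that homogeneity never actually enters your argument for this equality, which is a warning sign. The fix is the very application of Lemma~\ref{xshom} you wrote down but attached to the wrong equality: from $y\le x=\widetilde{x}\oplus(x\ominus\widetilde{x})\le y'$ with $\widetilde{x}\in\Sh(E)$ you get $y\le x\ominus\widetilde{x}$ directly (the paper instead invokes Proposition~\ref{modyjem} with $v=\wwidehat{x'}$, $w=x'$, $z=\wwidehat{x'}\ominus x'=x\ominus\widetilde{x}$). For the second equality your instantiation of Lemma~\ref{xshom} ($u=y$, $v_1=\widetilde{x}$, $v_2=x\ominus\widetilde{x}$, under ``$y\le x\le y'$'') presupposes $x\le y'$, i.e.\ $y\le x'$, which is exactly the conclusion of that direction, while its output $y\le x\ominus\widetilde{x}$ is already a hypothesis there. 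The correct instantiation is $u=y$, $v_1=\widetilde{x}$, $v_2=x'$: since $y\le x\ominus\widetilde{x}$ gives $(x\ominus\widetilde{x})'\le y'$, one has $y\le(x\ominus\widetilde{x})'=\widetilde{x}\oplus x'\le y'$, whence $y\le x'$; this is essentially the paper's argument (decompose $y$ by homogeneity and kill the summand under $\widetilde{x}$ by sharpness).

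There is also a hypothesis mismatch in your (iii): you identify $\wwidehat{x\ominus\widetilde{x}}=\wwidehat{x}\ominus\widetilde{x}$ via Statement~\ref{hatrozdilu}, but that statement is proved for sharply dominating effect algebras, whereas the lemma only assumes $E$ homogeneous with $\widetilde{x}$ and $\wwidehat{x}$ existing for the given $x$; you would have to prove this instance separately. The paper avoids this entirely: it intersects the right-hand sides of (i) and (ii), obtaining $[0,x]\cap[0,x']=[0,x\ominus\widetilde{x}]\cap[0,(x\ominus\widetilde{x})']\cap[0,(\wwidehat{x}\ominus x)']\cap[0,\wwidehat{x}\ominus x]$, and then drops the two redundant factors using $x\ominus\widetilde{x}\le(\wwidehat{x}\ominus x)'$ and $\wwidehat{x}\ominus x\le(x\ominus\widetilde{x})'$ (both immediate from the existence of $(x\ominus\widetilde{x})\oplus(\wwidehat{x}\ominus x)=\wwidehat{x}\ominus\widetilde{x}$). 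Your reduction of (ii) to (i) via Lemma~\ref{suplem} is fine and agrees with the paper.
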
}

\begin{proof} (i): Clearly,
$[0,x]\cap [0, x']\supseteq [0,x\ominus \widetilde{x}]\cap [0, x']$ 
and $[0,x\ominus \widetilde{x}]\cap [0, x']\subseteq %
[0,x\ominus \widetilde{x}]\cap 
[0, x'\oplus \widetilde{x}]=[0,x\ominus \widetilde{x}]\cap [0, (x\ominus \widetilde{x})']$. 
Assume that $y\in [0,x]\cap [0, x']$. Then 
by Proposition \ref{modyjem} applied to $\wwidehat{x'}$ and $x'$
we obtain that 
$y\leq \wwidehat{x'}\ominus x'=x\ominus \widetilde{x}$. Hence 
$[0,x]\cap [0, x']=[0,x\ominus \widetilde{x}]\cap [0, x']$.

Now, assume that $z\in [0,x\ominus \widetilde{x}]%
\cap [0, (x\ominus \widetilde{x})']$. Then 
$z\leq (x\ominus \widetilde{x})'=x'\oplus \widetilde{x}\leq z'$. 
Therefore $z=z_1\oplus z_2$, $z_1\leq x'$ and 
$z_2\leq \widetilde{x}\leq z'\leq z_2'$. Hence 
$z_2\leq \widetilde{x}\wedge \widetilde{x}'=0$. This yields 
that $z=z_1\leq x'$, i.e. 
$[0,x\ominus \widetilde{x}]\cap [0, x']=%
[0,x\ominus \widetilde{x}]\cap [0, (x\ominus \widetilde{x})']$.

(ii): It follows by interchanging $x$ and $x'$.

(iii): We have 
\begin{equation*}
[0,x]\cap [0, x']=%
[0,x\ominus \widetilde{x}]\cap [0, (x\ominus \widetilde{x})']%
\cap [0,(\wwidehat{x}\ominus x)']\cap [0, \wwidehat{x}\ominus x].
\end{equation*}
Since $x\ominus \widetilde{x}\leq (\wwidehat{x}\ominus x)'$ 
and $\wwidehat{x}\ominus x\leq (x\ominus \widetilde{x})'$ we 
are finished.
\end{proof}

\begin{lemma}\label{exssuplem}
Let $E$ be a  homogeneous effect algebra and let $x, x_1, \dots, x_n\in E$ be such that 
$\bigoplus_{i=1}^{n} x_i\leq x$, $\widetilde{x}$ exists and, for all $i=1, \dots, n$, $x\leq x_i'$. 
Then $\bigoplus_{i=1}^{n} x_i\leq x\ominus \widetilde{x}$ and therefore
$\widetilde{x}=\wwidetilde{x\ominus \bigoplus_{i=1}^{n} x_i}$.
\end{lemma}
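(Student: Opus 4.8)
The statement asks us to show that if $\bigoplus_{i=1}^n x_i \le x$, with $\widetilde{x}$ existing and $x \le x_i'$ for every $i$, then the orthogonal sum $\bigoplus_{i=1}^n x_i$ lies below $x \ominus \widetilde{x}$, from which $\widetilde{x} = \wwidetilde{x \ominus \bigoplus_{i=1}^n x_i}$ follows immediately by Lemma~\ref{dusuplem}(ii). The natural strategy is induction on $n$, reducing the general case to the single-element case, which is precisely what Lemma~\ref{xssuplem}(i) is designed to handle: there, $[0,x]\cap[0,x'] = [0, x\ominus\widetilde{x}]\cap[0,x']$, so an element below both $x$ and $x'$ automatically lies below $x\ominus\widetilde{x}$.

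First I would treat $n=1$. Write $s := \bigoplus_{i=1}^1 x_i = x_1$. The hypotheses give $s \le x$ and $x \le s'$, i.e. $s \le x \le s'$, so in particular $s \le x'$ (since $x \le s'$ means $s \le x'$). Thus $s \in [0,x]\cap[0,x']$, and by Lemma~\ref{xssuplem}(i) this set equals $[0, x\ominus\widetilde{x}]\cap[0,x']$, so $s \le x\ominus\widetilde{x}$, as required.

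For the inductive step, suppose the claim holds for $n-1$ and let $s_n := \bigoplus_{i=1}^n x_i$, $s_{n-1} := \bigoplus_{i=1}^{n-1} x_i$. The key observation is that each $x_i$ (for $i \le n-1$) satisfies $x \le x_i'$, and we need to arrange the hypotheses of the induction hypothesis for the element $y := x \ominus x_n$ in place of $x$. Indeed, $s_{n-1} \le s_n \ominus x_n \le x \ominus x_n = y$; moreover $x_n \le x \le x_i'$ gives $x_i \le x_n'$, hence $x_i \oplus x_n$ exists, and from $x \le x_i'$ we should derive $y = x\ominus x_n \le x_i'$ as well (since $x_i \oplus x_n \le x_i'$... more carefully: $x_i \le x' \le (x\ominus x_n)'$? one needs $x_i \oplus (x\ominus x_n)$ to exist, equivalently $x_i \le (x\ominus x_n)' = x_n \oplus x'$, which holds because $x_i \le x' \le x_n\oplus x'$). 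We also need $\widetilde{y} = \widetilde{x\ominus x_n}$ to exist: since $x_n \le x$ and $x_n \le x_n'$ (as $x \le x_n'$ and $x_n \le x$), $x_n$ lies in $[0,x]\cap[0,x']$, which by Lemma~\ref{xssuplem}(i) is contained in $[0, x\ominus\widetilde{x}]$, so $x_n \le x\ominus\widetilde{x}$, and then Lemma~\ref{dusuplem}(ii) tells us $\widetilde{x\ominus x_n} = \widetilde{x}$ exists. Applying the induction hypothesis to $y = x\ominus x_n$ with $x_1,\dots,x_{n-1}$ yields $s_{n-1} \le y \ominus \widetilde{y} = (x\ominus x_n)\ominus\widetilde{x}$, whence $s_n = s_{n-1}\oplus x_n \le ((x\ominus x_n)\ominus\widetilde{x})\oplus x_n = (x\ominus\widetilde{x})$, using associativity of $\oplus$ and the already-established fact $x_n \le x\ominus\widetilde{x}$. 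Finally, $\widetilde{x} = \wwidetilde{x\ominus s_n}$ follows from Lemma~\ref{dusuplem}(ii) applied with $y = s_n$.

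The main obstacle is bookkeeping the existence of the various orthosums and the order relations needed to feed Lemma~\ref{xssuplem}(i) and Lemma~\ref{dusuplem}(ii) at each stage — in particular verifying that $x\ominus x_n$ still dominates all of $x_1,\dots,x_{n-1}$ in the sense $x\ominus x_n \le x_i'$ and that $\widetilde{x\ominus x_n}$ equals $\widetilde{x}$. None of this is deep; it is the standard interplay of $\oplus$, $\ominus$, and $'$ in an effect algebra, together with the characterization of $\widetilde{x}$ as the largest sharp element below $x$. Once the $n=1$ base case is in hand via Lemma~\ref{xssuplem}(i), the induction is essentially forced.
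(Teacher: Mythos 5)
Your proof is correct, but it runs the induction along a genuinely different axis than the paper. The paper keeps $x$ fixed throughout: assuming $\bigoplus_{i=1}^{n}x_i\le x\ominus\widetilde{x}$, it notes $x_{n+1}\le \widetilde{x}\oplus\bigl((x\ominus\widetilde{x})\ominus\bigoplus_{i=1}^{n}x_i\bigr)\le x_{n+1}'$ and invokes Lemma~\ref{xshom} (homogeneity together with sharpness of $\widetilde{x}$) to conclude $x_{n+1}\le(x\ominus\widetilde{x})\ominus\bigoplus_{i=1}^{n}x_i$, so the new summand is absorbed directly. You instead shift the ambient element: you pass from $x$ to $y=x\ominus x_n$, check that the hypotheses transfer ($s_{n-1}\le y$, $y\le x_i'$ since $x_i\le x'\le x_n\oplus x'=y'$, and $\widetilde{y}=\widetilde{x}$ via Lemma~\ref{xssuplem}(i) and Lemma~\ref{dusuplem}(ii)), apply the induction hypothesis there, and then add $x_n$ back using the exchange identity $(x\ominus x_n)\ominus\widetilde{x}=(x\ominus\widetilde{x})\ominus x_n$. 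Homogeneity enters your argument only through Lemma~\ref{xssuplem}(i) (whose proof rests on Proposition~\ref{modyjem}), while the paper uses it through Lemma~\ref{xshom}; since Lemma~\ref{xssuplem} precedes the present lemma and does not depend on it, there is no circularity. Your route costs somewhat more bookkeeping (verifying that the hypotheses survive the shift and justifying the $\ominus$-exchange), whereas the paper's version needs only one decomposition of $x$ and a single appeal to Lemma~\ref{xshom}; on the other hand, your base case makes transparent that the lemma is precisely the iterated form of Lemma~\ref{xssuplem}(i). The concluding step, $\widetilde{x}=\wwidetilde{x\ominus\bigoplus_{i=1}^{n}x_i}$ from Lemma~\ref{dusuplem}(ii), agrees with the paper's (implicit) reasoning.
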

\begin{proof} For $n=0$, the statement trivially holds. Assume that 
the statement is satisfied for some $n$. Then 
$\bigoplus_{i=1}^{n+1} x_i\leq x=\widetilde{x}%
\oplus (x\ominus \widetilde{x})$ and, for all $i=1,
\dots, n+1$, $x_i\leq x'$ yield that 
$\bigoplus_{i=1}^{n} x_i\leq x\ominus \widetilde{x}$, and clearly
$x_{n+1}\leq
x\ominus \bigoplus_{i=1}^{n} x_i$. %
Since also $x_{n+1}\leq x'\leq (x\ominus \bigoplus_{i=1}^{n} x_i)'$,
further \(x_{n+1}\leq \wwidetilde{x}\oplus ((x\ominus\wwidetilde{x})
\ominus\bigoplus_{i=1}^{n} x_i)\leq x_{n+1}'\). By 
Lemma \ref{xshom} we obtain
$x_{n+1}\leq (x\ominus \wwidetilde{x})\ominus \bigoplus_{i=1}^{n} x_i$. 
This yields that $\bigoplus_{i=1}^{n+1} x_i\leq x\ominus \widetilde{x}$.
\end{proof}


{
}

\section{Blocks and orthogonal sums of hypermeager elements in
meager-orthocomplete effect algebras}
\label{podrobnosti}

\begin{definition} \rm \label{meagerocm}
An effect algebra \(E\) is {\em meager-orthocomplete} if \(\Mea(E)\)
is an orthocomplete generalized effect algebra. For a bounded 
orthogonal family $(v_i)_{i\in I}$ in $\Mea(E)$ we shall denote by 
$\bigoplus_{i\in I}^{\Mea(E)} v_i$ the orthogonal 
sum of $(v_i)_{i\in I}$ calculated in \(\Mea(E)\).
\end{definition}

\begin{observation}
Every orthocomplete effect algebra is meager-orthocomplete
and sharply dominating.
\end{observation}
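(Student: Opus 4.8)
The statement to prove is the Observation:

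\emph{Every orthocomplete effect algebra is meager-orthocomplete and sharply dominating.}

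\textbf{Plan of proof.} The sharp-domination half is already handled by results in the excerpt, so I would dispose of it first and spend the real effort on meager-orthocompleteness. Recall that an orthocomplete effect algebra is in particular homogeneous? No --- that is false in general, so I cannot invoke Statement~\ref{gejzaoch} directly. Instead, sharp domination of an orthocomplete effect algebra $E$ is a known fact (it follows, e.g., from the observation preceding Definition~\ref{hypsemea}-style arguments: given $x$, the family of all $s\in\Sh(E)$ below $x$ need not be orthogonal, so one must be slightly more careful). The cleanest route: for $x\in E$ consider a maximal orthogonal family $(s_i)_{i\in I}$ of nonzero sharp elements below $x$; since $E$ is orthocomplete and the family is bounded by $x$, $s:=\bigoplus_i s_i$ exists and $s\le x$. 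One checks $s\in\Sh(E)$ (using that a supremum of an orthogonal family of sharp elements that is itself an orthogonal sum is sharp --- this is where homogeneity-free arguments about $\Sh(E)$ closure under orthogonal sums in orthocomplete algebras are used, cf. Gudder) and that $s=\widetilde{x}$; maximality forces $x\ominus s$ to contain no nonzero sharp element, i.e. $x\ominus s\in\Mea(E)$. Then $x=s\oplus(x\ominus s)$ exhibits the decomposition, and by the remark after Statement~\ref{jpy2} (valid once $\Sh(E)$ is a sub-effect algebra, which holds here by \cite{cattaneo} since $E$ will be shown sharply dominating --- or directly) $E$ is sharply dominating. Honestly, I would just \emph{cite} this: it is standard (it is essentially \cite[Corollary~14]{jenca} specialized, or a folklore consequence), so the write-up can be one or two sentences invoking known results.

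\textbf{Meager-orthocompleteness.} This is the substantive part. Let $(v_i)_{i\in I}$ be a bounded orthogonal family in the generalized effect algebra $\Mea(E)$; say $w\in\Mea(E)$ is an upper bound of $\{\bigoplus_{i\in K}v_i\mid K\subseteq I\text{ finite}\}$ in $\Mea(E)$. First I note the family $(v_i)$ is also orthogonal \emph{in $E$} and bounded in $E$ (by $w$, or by any upper bound), so by orthocompleteness of $E$ the sum $v:=\bigoplus_{i\in I}^{E}v_i=\bigvee_E\{\bigoplus_{i\in K}v_i\mid K\subseteq I\text{ finite}\}$ exists in $E$. The whole game is to show $v\in\Mea(E)$, because then $v$ is automatically the orthogonal sum in $\Mea(E)$: it is an upper bound of the finite partial sums and, since $\Mea(E)$ is a downset and the join is computed in $E$, it is the least such upper bound lying in $\Mea(E)$ (indeed the least upper bound at all).

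\textbf{The crux: $v$ is meager.} Suppose $s\in\Sh(E)$ with $0\ne s\le v$; I must derive a contradiction. Here I would use that $v=\bigvee_E(\bigoplus_{i\in K}v_i)$ together with the infinite distributive law (IDL) and sharpness of $s$. The idea: $s\le v$ and, intuitively, $v$ is approximated from below by finite sums of meager elements, none of which can ``see'' $s$; more precisely, for each finite $K$, $\bigoplus_{i\in K}v_i\in\Mea(E)$ so $s\wedge(\bigoplus_{i\in K}v_i)$, if it exists, cannot be $s$ (it is a sharp? no --- it need not be sharp). So instead I would argue via $s'$: from $s\le v$ we get $v\wedge s' $ and use that $v = (v\wedge s)\vee\cdots$; the slick move is: since $s$ is sharp and $s\le v$, in any block $B$ containing $s$ and the relevant elements $s$ behaves centrally, and each partial sum $\bigoplus_{i\in K}v_i$ would have to split off a sharp (hence zero) part below $s$. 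The honest statement is that this meagerness-preservation under \emph{existing} orthogonal sums in an orthocomplete $E$ is exactly the content that makes $\Mea(E)$ orthocomplete, and it is proved by: taking a maximal sharp $\widetilde{v}$ below $v$ (exists, $E$ sharply dominating), writing $v=\widetilde{v}\oplus(v\ominus\widetilde{v})$, and showing $\widetilde{v}=0$ by a counting/Observation~\ref{semiarch}-style argument --- if $\widetilde v\ne0$ pick an atom-like sharp piece $s_0\le\widetilde v$, then $s_0\le v=\bigvee(\bigoplus_{i\in K}v_i)$, so by (IDL) applied to $s_0'$ one gets $s_0\le\bigoplus_{i\in K}v_i$ for... no, suprema need not be attained. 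The \emph{real} argument: for sharp $s_0$, $s_0\le v$ iff $s_0\le \bigoplus_{i\in K}v_i$ for \emph{some} finite $K$ --- this holds because $s_0$ sharp implies $v\ominus$ nothing below... I would instead invoke: in an orthocomplete (hence Archimedean) effect algebra, if $s$ is sharp and $s\le\bigvee_\alpha a_\alpha$ with $(a_\alpha)$ upward directed and $s\wedge a_\alpha$ small, then $s=\bigvee(s\wedge a_\alpha)$ by (IDL)-type distributivity, and since each $s\wedge(\bigoplus_{i\in K}v_i)\le\bigoplus_{i\in K}v_i\in\Mea(E)$ is meager while their join is the sharp element $s$, sharpness is contradicted unless $s=0$. \textbf{The main obstacle} is precisely making this last distributivity step rigorous: (IDL) as stated only distributes $\oplus b$ over a join, not $\wedge s$ over a join, so I would have to reduce to it by working with $s'$ and the identity $s=v\ominus(v\ominus s)$ --- or, most safely, carry the argument out inside a single block $B$ (Statement~\ref{gejzasum}(v) gives blocks only for homogeneous $E$, which we do not have), so in the non-homogeneous generality I expect to need a direct maximality argument: let $s$ be a maximal sharp element $\le v$; for each finite $K$, $s\ominus(s\wedge\bigoplus_{i\in K}v_i)$ decreases, and orthocompleteness plus Observation~\ref{semiarch} force it to $0$, giving $s\le\bigoplus_{i\in K}v_i$ for large $K$, contradicting meagerness of the partial sums. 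I would budget most of the proof-writing effort here and keep the rest terse.
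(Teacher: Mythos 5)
The paper offers no proof of this Observation at all --- with the definitions in place it is meant to be immediate --- and your proposal both misses the one-line argument and replaces it with a sketch that does not hold together. For meager-orthocompleteness: a bounded orthogonal system $G=(v_i)_{i\in I}$ in $\Mea(E)$ is, by the definition of boundedness inside the generalized effect algebra $\Mea(E)$, bounded by some $w\in\Mea(E)$; it is then bounded in $E$, so orthocompleteness of $E$ gives $v=\bigvee_E G^{\oplus}$, and $v\le w$ because $w$ is an upper bound of $G^{\oplus}$ while $v$ is the least one. Since $\Mea(E)$ is a downset of $E$ (as recalled just before Definition~\ref{hypsemea}), $v\in\Mea(E)$, and being the least upper bound in $E$ it is a fortiori the least upper bound of $G^{\oplus}$ within $\Mea(E)$; hence $G$ is orthosummable in $\Mea(E)$ and we are done. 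Your ``crux'' --- deriving a contradiction from a sharp $0\ne s\le v$ --- is therefore unnecessary, and, as you yourself concede, the argument you sketch for it is not a proof: (IDL) does not distribute meets over joins, the supremum of the partial sums need not be attained, and the final ``maximality plus Observation~\ref{semiarch}'' fallback is only gestured at. Note also that if boundedness were read as boundedness in $E$ rather than in $\Mea(E)$, the claim you were labouring to prove would simply be false (already in $E=[0,1]$ the orthogonal family $(2^{-n})_{n\ge 1}$ of meager elements has $E$-sum $1\notin\Mea(E)$), so the downset argument is not merely shorter; it reflects the only reading under which the Observation makes sense.

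For the sharply dominating half you correctly notice that Statement~\ref{gejzaoch} assumes homogeneity and that orthocompleteness alone does not yield it, but your substitute argument for a general orthocomplete $E$ --- orthosumming a maximal orthogonal family of nonzero sharp elements below $x$ and declaring the sum to be sharp and equal to $\widetilde{x}$ --- relies precisely on closure properties of $\Sh(E)$ (and on splitting an arbitrary sharp $t\le x$ against that sum) which are available only under homogeneity or under sharp domination itself; ``it is folklore, cite it'' is not a proof, and \cite[Corollary 14]{jenca} cannot be invoked without homogeneity. In the paper the Observation is only ever applied to homogeneous effect algebras (see Statement~\ref{gejzaoch} and Remark~\ref{larem}), so the defensible way to write this half is to quote Statement~\ref{gejzaoch} in that setting rather than to assert, without an argument, that every orthocomplete effect algebra is sharply dominating.
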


\begin{proposition}\label{xhcmea}
Let $E$ be a homogeneous  meager-orthocomplete
 effect algebra. 
Let $(v_i)_{i\in I}$ be an orthogonal family such that 
$v=\bigoplus_{i\in I}^{\Mea(E)} v_i$ exists, $v\in \Mea(E)$ 
and $u\in E$ be such that $u\leq v\leq u'$. 
Then there is an orthogonal family $(u_i)_{i\in I}$ such that 
$u=\bigoplus_{i\in I}^{\Mea(E)} u_i$ exists and $u_i\leq v_i$ for all 
$i\in I$.
\end{proposition}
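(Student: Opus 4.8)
The plan is to reduce to a single‐element splitting via homogeneity and then pass to the orthogonal sum inside $\Mea(E)$ using its orthocompleteness. The key structural input is that $\Mea(E)$ is a down-set closed under $\oplus$, hence a generalized effect algebra, and that by hypothesis it is orthocomplete. Throughout I will work with finite sub-sums $v_N=\bigoplus_{i\in N}^{\Mea(E)}v_i$ for finite $N\subseteq I$; note $v_N\leq v\leq u'$ and, by Proposition~\ref{infasoc} applied in $\Mea(E)$, $v=v_N\oplus^{\Mea(E)}(\bigoplus_{i\in I\setminus N}^{\Mea(E)}v_i)$.

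\medskip
First I would handle the finite case by induction on $|I|=n$. For $n=1$ take $u_1=u$. For the inductive step, suppose $u\leq v_1\oplus\cdots\oplus v_{n+1}\leq u'$. Put $w=v_2\oplus\cdots\oplus v_{n+1}$ (the sum exists in $E$ since it exists in $\Mea(E)$ and $\Mea(E)$ inherits $\oplus$), so $u\leq v_1\oplus w\leq u'$. By homogeneity of $E$ there are $u_1\leq v_1$ and $t\leq w$ with $u=u_1\oplus t$. Now $t\leq u$, so $t\in\Mea(E)$ as $\Mea(E)$ is a down-set, and $t\leq w=v_2\oplus\cdots\oplus v_{n+1}$; moreover $t\leq u\leq u'\leq (u_1)'\ominus$-adjusted, and more to the point $t\leq u\leq v_1\oplus w\leq u'\leq t'$, so $t\leq w\leq t'$. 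By the inductive hypothesis applied to $t$ and $(v_2,\dots,v_{n+1})$ we get $u_2\leq v_2,\dots,u_{n+1}\leq v_{n+1}$ with $t=u_2\oplus\cdots\oplus u_{n+1}$, hence $u=u_1\oplus u_2\oplus\cdots\oplus u_{n+1}$, proving the finite case (and in this case everything already lives in $\Mea(E)$). The main subtlety here is checking the ``sandwich'' $t\leq w\leq t'$ needed to re-apply homogeneity: it follows from $t\leq u\leq u'$ together with $w\leq v_1\oplus w\leq u'$ and $u'\leq t'$.

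\medskip
For the general case I would use a coherence/direct-limit argument. For each finite $N\subseteq I$ the finite case gives a family $(u_i^{N})_{i\in N}$ with $u_i^{N}\leq v_i$ and $\bigoplus_{i\in N}u_i^{N}=u$. The delicate point — and I expect this to be the main obstacle — is that these finite solutions need not be consistent across different $N$ (homogeneity does not give canonical refinements). The remedy is to choose the $u_i$ intrinsically: since $\Mea(E)$ is orthocomplete, it is Archimedean, so I would try to define $u_i$ as the ``$v_i$-component'' of $u$ and verify orthogonality and summability directly. Concretely: fix $i$; writing $u\leq v_i\oplus(\bigoplus_{j\neq i}^{\Mea(E)}v_j)\leq u'$, homogeneity yields $u_i\leq v_i$ and a remainder, but to make this canonical I would instead establish that for any two finite sets $N\subseteq N'$ the partial sums can be taken compatibly by running the induction so that it \emph{extends} a given solution on $N$ to $N'$ (the inductive step above, read as ``add one index $v_{n+1}$'', does exactly this: it keeps $u_1,\dots,u_n$ fixed and only splits off $u_{n+1}$ from the remainder $t$). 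Thus, enumerating $I$ and taking an increasing union of finite solutions, one obtains a single family $(u_i)_{i\in I}$ with $u_i\leq v_i$ such that every finite sub-sum $\bigoplus_{i\in N}u_i$ equals some element $\leq u\leq u'$, in particular $\leq v$; hence $(u_i)_{i\in I}$ is a bounded orthogonal family in $\Mea(E)$.

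\medskip
Finally, orthocompleteness of $\Mea(E)$ gives that $u':=\bigoplus_{i\in I}^{\Mea(E)}u_i$ exists; it remains to show $u'=u$. Since each finite partial sum $\bigoplus_{i\in N}u_i\leq u$, we get $u'\leq u$. Conversely I would show $u\ominus u'$ is orthogonal to every $v_i$ and $\leq v$, forcing it to be $0$: indeed $u\ominus\bigoplus_{i\in N}u_i$ is, by construction of the finite solutions, dominated by $\bigoplus_{i\notin N}v_i$, and taking the infimum over finite $N$ (using that $v=\bigvee_N v_N$ and the infinite distributive law (IDL) in the generalized effect algebra $\Mea(E)$) yields $u\ominus u'\leq\bigwedge_N\bigoplus_{i\notin N}v_i$; one checks this meet is $0$ because $v\ominus v_N$ decreases to $0$ along the directed system, again by orthosummability of $(v_i)$ in $\Mea(E)$ and Observation~\ref{semiarch}-type reasoning. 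Hence $u=u'=\bigoplus_{i\in I}^{\Mea(E)}u_i$, as required. The only place genuine care is needed is the last meet-is-zero step, for which I would lean on Proposition~\ref{infasoc} and the fact that $\Mea(E)$ is orthocomplete hence Archimedean.
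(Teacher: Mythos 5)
Your finite case, the one-index-at-a-time use of homogeneity, and the closing idea (the residual $u\ominus\bigoplus_{i\in I}^{\Mea(E)}u_i$ is a common lower bound of the tails $v\ominus\bigoplus_{i\in N}v_i$, hence zero) are all in the right spirit and close to the paper's argument; in particular the "meet of tails is zero" step does work, though the clean justification is cancellation rather than Observation~\ref{semiarch}: if $w\le v\ominus\bigoplus_{i\in N}v_i$ for every finite $N$, then $\bigoplus_{i\in N}v_i\le v\ominus w$ for every finite $N$, so $v\le v\ominus w$ by the sup-definition of $\bigoplus^{\Mea(E)}$, and (GE3) gives $w=0$.

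The genuine gap is the bridge from finite subsets to an arbitrary index set $I$. "Enumerating $I$ and taking an increasing union of finite solutions" exhausts $I$ only when $I$ is countable; for uncountable $I$ no increasing chain of finite sets covers $I$, so your construction of the family $(u_i)_{i\in I}$ simply does not terminate as described. To repair it you must either run Zorn's lemma on partial solutions $(u_i)_{i\in J}$, $J\subseteq I$, or a transfinite recursion, and in both cases the extension step at an \emph{infinite} partial solution $J$ needs the invariant $u\ominus\bigoplus_{i\in J}^{\Mea(E)}u_i\le\bigoplus_{i\notin J}^{\Mea(E)}v_i$ at the infinite stage (this is exactly the sandwich that lets homogeneity split off $u_j\le v_j$ for a new index $j$); knowing it only for finite $N$ is not enough, and verifying that it survives unions of chains (or limit ordinals) is a real step, carried out in the paper via the definition of "$u$-good" sets (condition (iii)), closure of $u$-goodness under unions of chains using (IDL)/the sup-definition of the orthosum, and maximality from Zorn's lemma; once $\pi_2(Z)=I$, the invariant at the full index set gives the residual $\le v\ominus v=0$ at once. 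So your outline is essentially the countable case of the paper's proof, and the missing content is precisely the maintenance of the remainder-domination invariant across infinite partial solutions, which is where Zorn's lemma and (IDL) enter in the paper.
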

\begin{proof} Let us put $X=\{(x, i)\in E\times I \mid 
i\in I, x\leq v_i, x\leq u\}$. We say that a subset $Y\subseteq X$ is 
$u${\em{}-good} if 
{\renewcommand{\labelenumi}{(\roman{enumi})}
\begin{enumerate}
\item For all $f, g\in Y$, 
$\pi_2(f)=\pi_2(g)$ implies $f=g$.
\item
\(\bigoplus_{y\in Y}^{\Mea(E)} \pi_1(y)\leq u\).
\item   
$u\ominus \bigoplus_{y\in Y}^{\Mea(E)} \pi_1(y)\leq v \ominus 
\bigoplus_{y\in Y}^{\Mea(E)} v_{\pi_2(y)}$. 
\end{enumerate}}
Let us denote 
$u-X$ the system of all $u$-good subsets of $X$ ordered by inclusion. Then 
$\emptyset\in u-X$ and the union of any chain in $u-X$ is again in $u-X$ in
virtue of (IDL). 
Hence by Zorn's lemma there is a maximal element, say $Z$, in $u-X$.

Let us show that $\pi_2(Z)=I$. Assume the contrary. Then there is $j\in I$ 
such that $j\not\in \pi_2(Z)$. 
Then 
$$\begin{array}{r@{\,\,}c@{\,\,} l}
u\ominus \bigoplus_{y\in Z}^{\Mea(E)} \pi_1(y)\leq v \ominus 
\bigoplus_{y\in Z}^{\Mea(E)} v_{\pi_2(y)}&=&v_j\oplus ((v \ominus 
\bigoplus_{y\in Z}^{\Mea(E)} v_{\pi_2(y)})\ominus v_j)\\
&\leq& u'\leq 
(u\ominus \bigoplus_{y\in Z}^{\Mea(E)} \pi_1(y))'.\phantom{\text{\Huge L}}
\end{array}
$$
Since $E$  is homogeneous we get that 
$u\ominus \bigoplus_{y\in Z}^{\Mea(E)} \pi_1(y)=u_j\oplus x$ such that 
$u_j\leq v_j$ and $x\leq (v \ominus %
\bigoplus_{y\in Z}^{\Mea(E)} v_{\pi_2(y)})\ominus v_j$. 
Hence $(u\ominus \bigoplus_{y\in Z}^{\Mea(E)} \pi_1(y))\ominus u_j=x 
\leq (v \ominus \bigoplus_{y\in Z}^{\Mea(E)} v_{\pi_2(y)})\ominus v_j$.
This yields that 
the set $Z\cup \{(u_j, j)\}$ is $u$-good, a contradiction with 
the maximality of $Z$.

Therefore $\pi_2(Z)=I$. But this yields 
$u\ominus \bigoplus_{y\in Z}^{\Mea(E)} \pi_1(y)\leq v \ominus 
\bigoplus_{y\in Z}^{\Mea(E)} v_{\pi_2(y)}=0$. Let us put 
$u_{\pi_2(y)}=\pi_1(y)$ for all $y\in Z$. Hence 
$u\ominus \bigoplus_{i\in I}^{\Mea(E)} u_i=0$, i.e.,   
$u= \bigoplus_{i\in I}^{\Mea(E)} u_i$.
\end{proof}

\begin{proposition}\label{ssxhcmea}
Let $E$ be a  homogeneous  meager-orthocomplete sharply dominating 
 effect algebra. 
Let $v_1, v_2\in E$ such that $v_1\leq v_2'$.
Let $(u_i)_{i\in I}$ be an orthogonal family such that 
$u=\bigoplus_{i\in I}^{\Mea(E)} u_i\in \Mea(E)$ exists, 
$u\leq v_1\oplus v_2$ and, for all $i\in I$, 
$v_1\oplus v_2\leq u_i'$. 
Then there are orthogonal families $(v^{1}_i)_{i\in I}$ and 
$(v^{2}_i)_{i\in I}$ such that 
$w_1=\bigoplus_{i\in I}^{\Mea(E)} v^{1}_i\leq v_1$ exists,  
$w_2=\bigoplus_{i\in I}^{\Mea(E)} v^{2}_i\leq v_2$ exists, 
$u=w_1\oplus w_2$
and, for all $i\in I$, 
$v^{1}_i\oplus v^{2}_i= u_i$, $v_1\leq {v^{1}_i}'$ and 
$v_2\leq {v^{2}_i}'$.
\end{proposition}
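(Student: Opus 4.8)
The plan is to adapt the transfinite construction from the proof of Proposition~\ref{xhcmea}; the new feature is that each $u_i$ now has to be split into \emph{two} pieces, one below $v_1$ and one below $v_2$, and that the two partial sums must be kept below $v_1$ and below $v_2$ throughout. As a preliminary I reduce to the single-index case. Fix $i\in I$; since $u_i\le u\le v_1\oplus v_2\le u_i'$, homogeneity of $E$ gives $a\le v_1$, $b\le v_2$ with $u_i=a\oplus b$, and $v_1\oplus v_2\le u_i'=(a\oplus b)'$ forces $v_1\oplus v_2\oplus a\oplus b$ to exist, so automatically $a\le v_1'$ and $b\le v_2'$. Thus each $u_i$ admits at least one $x$ with $x\le v_1$, $x\le u_i$, $u_i\ominus x\le v_2$, and then $x\le v_1'$, $u_i\ominus x\le v_2'$ hold for free and need not be monitored. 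Put $X=\{(x,i)\in E\times I\mid x\le v_1,\ x\le u_i,\ u_i\ominus x\le v_2\}$.

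Call $Y\subseteq X$ \emph{good} if $\pi_2$ is injective on $Y$; the families $(\pi_1(y))_{y\in Y}$ and $(u_{\pi_2(y)}\ominus\pi_1(y))_{y\in Y}$ are orthosummable in $\Mea(E)$ --- which they are, their finite subsums being dominated by the corresponding finite subsums of $(u_i)_{i\in I}$, hence by $u\in\Mea(E)$ --- with sums $p_Y\le v_1$ and $q_Y\le v_2$; and $\bigoplus_{y\in Y}^{\Mea(E)}u_{\pi_2(y)}=p_Y\oplus q_Y$. Clearly $\emptyset$ is good. If $(Y_\alpha)$ is a chain of good sets, then the union $Y$ is good: injectivity is clear, the two families are again bounded by $u$ hence orthosummable in $\Mea(E)$, and Proposition~\ref{infasoc} together with (IDL) inside the orthocomplete generalized effect algebra $\Mea(E)$ gives $p_Y=\bigvee_\alpha p_{Y_\alpha}\le v_1$, $q_Y=\bigvee_\alpha q_{Y_\alpha}\le v_2$, and the last identity. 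By Zorn's lemma choose a maximal good $Z$ and set $s_Z:=\bigoplus_{y\in Z}^{\Mea(E)}u_{\pi_2(y)}=p_Z\oplus q_Z$.

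I claim $\pi_2(Z)=I$. If not, take $j\in I\setminus\pi_2(Z)$. The subfamily $(u_i)_{i\in\pi_2(Z)\cup\{j\}}$ of $(u_i)_{i\in I}$ is orthogonal and bounded by $u$, hence orthosummable in $\Mea(E)$, and by Proposition~\ref{infasoc} its sum equals $s_Z\oplus u_j\le u$; in particular $u_j\le u\ominus s_Z$. Since $s_Z=p_Z\oplus q_Z$ with $p_Z\le v_1$, $q_Z\le v_2$ and $s_Z\le u\le v_1\oplus v_2$, we get $u\ominus s_Z\le(v_1\oplus v_2)\ominus s_Z=(v_1\ominus p_Z)\oplus(v_2\ominus q_Z)$, and this element is $\le v_1\oplus v_2\le u_j'$. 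Therefore
\[
u_j\le(v_1\ominus p_Z)\oplus(v_2\ominus q_Z)\le u_j',
\]
so homogeneity yields $a\le v_1\ominus p_Z$, $b\le v_2\ominus q_Z$ with $u_j=a\oplus b$. Then $(a,j)\in X$ (indeed $a\le v_1\ominus p_Z\le v_1$, $a\le u_j$, $u_j\ominus a=b\le v_2\ominus q_Z\le v_2$), and by Proposition~\ref{infasoc} the set $Z\cup\{(a,j)\}$ is good, with partial sums $p_Z\oplus a\le v_1$, $q_Z\oplus b\le v_2$ and $(p_Z\oplus a)\oplus(q_Z\oplus b)=s_Z\oplus u_j$. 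This contradicts the maximality of $Z$, so $\pi_2(Z)=I$.

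Finally, for each $i\in I$ let $(x_i,i)$ be the unique pair of $Z$ with second coordinate $i$, and put $v^1_i:=x_i$, $v^2_i:=u_i\ominus x_i$. Then $v^1_i\oplus v^2_i=u_i$; $w_1:=p_Z=\bigoplus_{i\in I}^{\Mea(E)}v^1_i\le v_1$; $w_2:=q_Z=\bigoplus_{i\in I}^{\Mea(E)}v^2_i\le v_2$; and since $\pi_2(Z)=I$ the last defining identity gives $u=\bigoplus_{i\in I}^{\Mea(E)}u_i=s_Z=w_1\oplus w_2$. Moreover $v_1\oplus v_2\le u_i'=(v^1_i\oplus v^2_i)'$ makes $v_1\oplus v_2\oplus v^1_i\oplus v^2_i$ exist, whence $v^1_i\le v_1'$ and $v^2_i\le v_2'$, i.e.\ $v_1\le(v^1_i)'$ and $v_2\le(v^2_i)'$, which is exactly the required decomposition. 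I expect the main obstacle to be the bookkeeping in the Zorn step: verifying that goodness --- in particular the inequalities $p_Y\le v_1$, $q_Y\le v_2$ (for the genuine $\Mea(E)$-sums, not just their finite approximants) and the identity $\bigoplus_{y\in Y}^{\Mea(E)}u_{\pi_2(y)}=p_Y\oplus q_Y$ --- is preserved under unions of chains and under the one-point extension, which rests on infinite associativity (Proposition~\ref{infasoc}) and (IDL) inside the orthocomplete generalized effect algebra $\Mea(E)$, just as in the proof of Proposition~\ref{xhcmea}.
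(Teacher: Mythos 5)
Your overall architecture (Zorn's lemma on partial splittings, homogeneity for the one-point extension, the observation that $v_1\le(v^1_i)'$, $v_2\le(v^2_i)'$ come for free from $v_1\oplus v_2\le u_i'$) matches the paper's proof, and the maximality and final steps are fine. But there is a genuine gap exactly at the point you yourself flag as "the main obstacle": the preservation under unions of chains of the inequalities $p_Y\le v_1$ and $q_Y\le v_2$. Your justification is that $p_Y=\bigvee_\alpha p_{Y_\alpha}\le v_1$ by Proposition~\ref{infasoc} and (IDL). This does not follow: $p_Y=\bigoplus_{y\in Y}^{\Mea(E)}\pi_1(y)$ is by definition a supremum of the finite subsums computed \emph{inside} $\Mea(E)$, and $v_1$ need not be a meager element. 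Knowing that every $p_{Y_\alpha}$ (or every finite subsum) is $\le v_1$ only says that $v_1$ is an upper bound in $E$; the least upper bound taken within the subposet $\Mea(E)$ has no reason to lie below an upper bound that is outside $\Mea(E)$. The same issue already blocks the chain step, and it cannot be defined away by weakening "goodness" to a condition on finite subsums only, because in the extension step you need $v_1\ominus p_Z$ and $v_2\ominus q_Z$ for the genuine infinite sums $p_Z,q_Z$.

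The missing ingredient is precisely where the sharply dominating hypothesis enters -- note that your proposal never uses it, which is a warning sign. The paper closes this gap with Lemma~\ref{exssuplem}: since each $\pi_1(y)\le u_{\pi_2(y)}\le(v_1\oplus v_2)'\le v_1'$, any finite subsum $\bigoplus_{y\in F}\pi_1(y)\le v_1$ actually satisfies $\bigoplus_{y\in F}\pi_1(y)\le v_1\ominus\widetilde{v_1}$, and $v_1\ominus\widetilde{v_1}\in\Mea(E)$ (here $\widetilde{v_1}$ exists because $E$ is sharply dominating). With a \emph{meager} upper bound in hand, orthocompleteness of $\Mea(E)$ gives $p_Y\le v_1\ominus\widetilde{v_1}\le v_1$, and similarly for $q_Y$. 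Inserting this argument into your chain step (and, more minor, spelling out why $\bigoplus_{y\in Y}^{\Mea(E)}u_{\pi_2(y)}=p_Y\oplus q_Y$ survives the union, which needs a short converse-type supplement to Proposition~\ref{infasoc} carried out inside $\Mea(E)$) turns your proposal into essentially the paper's proof, modulo your harmless reparametrization of the paper's triples $(x_1,x_2,i)$ by pairs $(x,i)$ with $x_2=u_i\ominus x$.
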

\begin{proof} Let us put $X_{1,2}=\{(x_1, x_2, i)\in E^{2}\times I \mid 
i\in I, x_1\leq v_1, x_2\leq v_2\}$. We say that a subset $Y\subseteq X_{1,2}$ is 
$u_{1,2}${\em{}-good} if 
{\renewcommand{\labelenumi}{(\roman{enumi})}
\begin{enumerate}
\item For all $f, g\in Y$, 
$\pi_3(f)=\pi_3(g)$ implies $f=g$,
\item  $\pi_1(y)\oplus \pi_2(y)= u_{\pi_3(y)}$ for all $y\in Y$,
\item $\bigoplus_{y\in Y}^{\Mea(E)} \pi_1(y)\leq v_1$,
\item $\bigoplus_{y\in Y}^{\Mea(E)} \pi_2(y)\leq v_2$.
\end{enumerate}}
Let us denote $u-X_{1,2}$ the system of 
all $u_{1,2}$-good subsets of $X_{1,2}$ ordered by inclusion. Then 
$\emptyset\in u-X_{1,2}$ and the union 
$Y=\bigcup\{Y_{\alpha}, \alpha\in \Lambda\}$ of a chain of 
$u_{1,2}${{}-good} sets $Y_{\alpha}, \alpha\in \Lambda$ in 
$u-X_{1,2}$ is again in $u-X_{1,2}$. Namely, 
the conditions (i) and (ii) are obviously satisfied. Let us 
check the condition (iii). Let $F\subseteq Y$ be a finite subset 
of $Y$. Then there is $\alpha_0\in \Lambda$ such that 
$F\subseteq Y_{\alpha_0}$. Hence 
$\bigoplus_{y\in F} \pi_1(y)=%
\bigoplus_{y\in F}^{\Mea(E)} \pi_1(y) \leq v_1 \leq v_1\oplus v_2\leq u_{\pi_3(y)}'\leq \pi_1(y)'$ 
for all ${y\in F}$. 
By Lemma \ref{exssuplem} we get that 
$\bigoplus_{y\in F} \pi_1(y)\leq v_1 \ominus \widetilde{v_1}\in \Mea(E)$.  
Since $\Mea(E)$  is an orthocomplete generalized effect algebra we have that  
$\bigoplus_{y\in Y}^{\Mea(E)} \pi_1(y)$ exists in $\Mea(E)$ and  therefore 
$\bigoplus_{y\in Y}^{\Mea(E)} \pi_1(y)\leq v_1 \ominus \widetilde{v_1}\leq v_1$.  
The condition (iv) follows by similar considerations.

By Zorn's lemma there is a maximal element, say $Z$, in $u-X_{1,2}$.

Let us show that $\pi_3(Z)=I$. Assume the contrary. Then there is $j\in I$ 
such that $j\not\in \pi_3(Z)$. 
Therefore by a successive application of Proposition \ref{infasoc} 
$$
\begin{array}{r c l}
u&=&\bigoplus_{i\in I}^{\Mea(E)} u_i=%
\bigoplus_{y\in Z}^{\Mea(E)} u_{\pi_3(y)} \oplus 
\bigoplus_{k\in I\setminus \pi_3(Z)}^{\Mea(E)} u_k\\
&=&%
\bigoplus_{y\in Z}^{\Mea(E)} (\pi_1(y)\oplus \pi_2(y)) \oplus 
\bigoplus_{k\in I\setminus \pi_3(Z)}^{\Mea(E)} u_k%
\phantom{\text{Huge \bf I}}\\
&=&%
\bigoplus_{y\in Z}^{\Mea(E)} \pi_1(y)\oplus \bigoplus_{y\in Z}^{\Mea(E)} \pi_2(y) \oplus 
\bigoplus_{k\in I\setminus \pi_3(Z)}^{\Mea(E)} u_k\leq v_1\oplus v_2.%
\phantom{\text{Huge \bf I}}\\
\end{array}
$$

The last inequality yields 
$$
u_j\leq (v_1\ominus \bigoplus_{y\in Z}^{\Mea(E)} \pi_1(y))%
\oplus (v_2\ominus\bigoplus_{y\in Z}^{\Mea(E)} \pi_2(y))\leq v_1\oplus v_2\leq u_j'.
$$
Since $E$  is homogeneous we get that there are 
$v^{1}_j, v^{2}_j\in E$ such that 
$v^{1}_j\oplus v^{2}_j= u_j$, 
$v^{1}_j\leq v_1\ominus \bigoplus_{y\in Z}^{\Mea(E)} \pi_1(y)$, 
$v^{2}_j\leq v_2\ominus\bigoplus_{y\in Z}^{\Mea(E)} \pi_2(y)$.

This yields that 
the set $Z\cup \{(v^{1}_j, v^{2}_j, j)\}$ is 
$u_{1,2}$-good, a contradiction with 
the maximality of $Z$.

Therefore $\pi_3(Z)=I$. For any $i\in I$ there is a unique 
$y_i\in Z$ such that $\pi_3(y_i)=i$ and, conversely, for any $y\in Z$ there is a unique 
$i_{y}\in I$ such that $\pi_3(y)=i_{y}$. Let us put 
$v^{1}_i=\pi_1(y_i)$, $v^{2}_i=\pi_2(y_i)$, 
$w_1=\bigoplus_{y\in Z}^{\Mea(E)} \pi_1(y)$ and 
$w_2=\bigoplus_{y\in Z}^{\Mea(E)} \pi_2(y)$. Since $Z$ is 
$u_{1,2}$-good we have that 
$w_1\leq v_1$, $w_2\leq v_2$ and 
$w_1\oplus w_2= %
\bigoplus_{y\in Z}^{\Mea(E)} \pi_1(y)\oplus \bigoplus_{y\in Z}^{\Mea(E)} \pi_2(y)=%
\bigoplus_{i\in I}^{\Mea(E)} u_i.$

Moreover, for all $i\in I$, 
 $v_1\leq v_1\oplus v_2 \leq u_i'\leq {v^{1}_i}'$ 
 and 
 $v_2\leq v_1\oplus v_2 \leq u_i'\leq {v^{1}_2}'$.
\end{proof}

\begin{corollary}\label{corssxhcmea}
Let $E$ be a   homogeneous  meager-orthocomplete sharply dominating
 effect algebra. 
Let $v_1, \dots, v_k\in E$ be an orthogonal family.
Let $(u_i)_{i\in I}$ be an orthogonal family such that 
$u=\bigoplus_{i\in I}^{\Mea(E)} u_i\in \Mea(E)$ exists, 
$u\leq v_1\oplus \dots \oplus v_k$ and, for all $i\in I$, 
$v_1\oplus \dots \oplus v_k\leq u_i'$. 
Then there are orthogonal families $(v^{1}_i)_{i\in I}$, $\dots$,
$(v^{k}_i)_{i\in I}$ such that 
$w_1=\bigoplus_{i\in I}^{\Mea(E)} v^{1}_i\leq v_1$ exists,  $\dots$,
$w_k=\bigoplus_{i\in I}^{\Mea(E)} v^{k}_i\leq v_2$ exists, $w_1\leq v_1$, 
$\dots$, $w_k\leq v_k$, $u=w_1\oplus \dots \oplus w_k$
and, for all $i\in I$, 
$v^{1}_i\oplus \dots \oplus v^{k}_i= u_i$, 
$v_1\leq {v^{1}_i}'$, $\dots$, 
$v_k\leq {v^{k}_i}'$.
\end{corollary}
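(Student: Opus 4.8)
The plan is to prove the statement by induction on $k$, using Proposition~\ref{ssxhcmea} as the engine of the inductive step. The base case $k=1$ is trivial: take $v^1_i=u_i$ for all $i$ and $w_1=u$, since $u\leq v_1$ and $v_1\leq u_i'=(v^1_i)'$ by hypothesis. (The case $k=2$ is precisely Proposition~\ref{ssxhcmea}.)

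For the inductive step, assume the statement holds for $k-1$, where $k\geq 2$, and write $\bar v=v_1\oplus\cdots\oplus v_{k-1}$. Since the family $v_1,\dots,v_k$ is orthogonal, $\bar v\oplus v_k=v_1\oplus\cdots\oplus v_k$ exists, so $\bar v\leq v_k'$. First I would apply Proposition~\ref{ssxhcmea} with $\bar v$ and $v_k$ in place of $v_1$ and $v_2$, and with the given family $(u_i)_{i\in I}$: its hypotheses are exactly what we are given, namely $u=\bigoplus_{i\in I}^{\Mea(E)} u_i\in\Mea(E)$ exists, $u\leq\bar v\oplus v_k=v_1\oplus\cdots\oplus v_k$, and $v_1\oplus\cdots\oplus v_k\leq u_i'$ for all $i$. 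This yields orthogonal families $(\bar v_i)_{i\in I}$ and $(v^k_i)_{i\in I}$ such that $w:=\bigoplus_{i\in I}^{\Mea(E)}\bar v_i\leq\bar v$ and $w_k:=\bigoplus_{i\in I}^{\Mea(E)}v^k_i\leq v_k$ both exist, $u=w\oplus w_k$, and for all $i\in I$ one has $\bar v_i\oplus v^k_i=u_i$, $\bar v\leq\bar v_i'$ and $v_k\leq(v^k_i)'$.

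Next I would recurse, applying the inductive hypothesis to the orthogonal family $v_1,\dots,v_{k-1}$ together with the family $(\bar v_i)_{i\in I}$. Its hypotheses hold: $w=\bigoplus_{i\in I}^{\Mea(E)}\bar v_i$ exists and lies in $\Mea(E)$ because $w\leq u\in\Mea(E)$ and $\Mea(E)$ is a downset; $w\leq v_1\oplus\cdots\oplus v_{k-1}=\bar v$; and $v_1\oplus\cdots\oplus v_{k-1}=\bar v\leq\bar v_i'$ for all $i$. We thereby obtain orthogonal families $(v^1_i)_{i\in I},\dots,(v^{k-1}_i)_{i\in I}$ with $w_j:=\bigoplus_{i\in I}^{\Mea(E)}v^j_i\leq v_j$ for $j=1,\dots,k-1$, with $w=w_1\oplus\cdots\oplus w_{k-1}$, and for all $i\in I$, $v^1_i\oplus\cdots\oplus v^{k-1}_i=\bar v_i$ and $v_j\leq(v^j_i)'$. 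Assembling, $u=w\oplus w_k=w_1\oplus\cdots\oplus w_k$, while for each $i$ the orthosum $v^1_i\oplus\cdots\oplus v^k_i=(v^1_i\oplus\cdots\oplus v^{k-1}_i)\oplus v^k_i=\bar v_i\oplus v^k_i=u_i$ exists, and $v_j\leq(v^j_i)'$ for all $j=1,\dots,k$; this is the asserted conclusion, completing the induction.

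The argument is essentially bookkeeping, and I do not expect a genuine obstacle. The one point that needs care is verifying that the family $(\bar v_i)_{i\in I}$ produced by the binary split again satisfies the three hypotheses required to invoke the $(k-1)$-case: that $\bigoplus_{i\in I}^{\Mea(E)}\bar v_i$ is meager (which follows from $w\leq u\in\Mea(E)$), that it is dominated by $v_1\oplus\cdots\oplus v_{k-1}$ (this is $w\leq\bar v$ from Proposition~\ref{ssxhcmea}), and that $v_1\oplus\cdots\oplus v_{k-1}\leq\bar v_i'$ (this is the conclusion $\bar v\leq\bar v_i'$ of Proposition~\ref{ssxhcmea}).
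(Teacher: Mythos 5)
Your proof is correct and takes exactly the route the paper intends: the paper's own proof is just the remark ``Straightforward induction with respect to $k$,'' and your argument carries out that induction, using Proposition~\ref{ssxhcmea} for the split $(v_1\oplus\cdots\oplus v_{k-1})\oplus v_k$ and checking the hypotheses needed to recurse.
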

\begin{proof} Straightforward induction with respect to $k$.
\end{proof}

\begin{corollary}\label{secorssxhcmea}
Let $E$ be a   homogeneous  meager-orthocomplete sharply dominating
 effect algebra. 
Let $v, u\in \Mea(E)$, $v\leq u$.
Let $(u_i)_{i\in I}$ be an orthogonal family such that 
$u=\bigoplus_{i\in I}^{\Mea(E)} u_i\in \Mea(E)$ exists 
and, for all $i\in I$, 
$v \oplus (u\ominus v)=u\leq u_i'$. 
Then there are orthogonal families $(v^{1}_i)_{i\in I}$,  
$(v^{2}_i)_{i\in I}$ such that 
$v=\bigoplus_{i\in I}^{\Mea(E)} v^{1}_i$ exists, 
$u\ominus v=\bigoplus_{i\in I}^{\Mea(E)} v^{2}_i$ exists
and, for all $i\in I$, 
$v^{1}_i\oplus v^{2}_i= u_i$, 
$v\leq {v^{1}_i}'$ and $u\ominus v\leq {v^{k}_i}'$.
\end{corollary}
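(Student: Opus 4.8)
The plan is to deduce this statement directly from Proposition~\ref{ssxhcmea} (equivalently, from Corollary~\ref{corssxhcmea} applied with $k=2$) by the specialization $v_1=v$, $v_2=u\ominus v$, and then to sharpen the two inequalities that proposition produces into equalities.

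First I would verify that the hypotheses of Proposition~\ref{ssxhcmea} hold for this choice. Since $v\le u$, the element $u\ominus v$ exists, lies in $\Mea(E)$, and satisfies $v\oplus(u\ominus v)=u$; in particular $v_1\le v_2'$ and $u\le v_1\oplus v_2$ (with equality). The condition ``$v_1\oplus v_2=u\le u_i'$ for all $i\in I$'' is exactly the hypothesis of the corollary, and $(u_i)_{i\in I}$ is by assumption an orthogonal family with $u=\bigoplus_{i\in I}^{\Mea(E)}u_i\in\Mea(E)$. Proposition~\ref{ssxhcmea} then delivers orthogonal families $(v^1_i)_{i\in I}$ and $(v^2_i)_{i\in I}$ with $w_1=\bigoplus_{i\in I}^{\Mea(E)}v^1_i\le v$, $w_2=\bigoplus_{i\in I}^{\Mea(E)}v^2_i\le u\ominus v$, $u=w_1\oplus w_2$, and, for every $i\in I$, $v^1_i\oplus v^2_i=u_i$, $v\le{v^1_i}'$ and $u\ominus v\le{v^2_i}'$.

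It then remains to check that $w_1=v$ and $w_2=u\ominus v$. From $w_1\le v\le u$ and $w_1\oplus w_2=u$, cancellation (GE3) gives $w_2=u\ominus w_1$; and $w_1\le v$ forces $u\ominus v\le u\ominus w_1=w_2$, so together with $w_2\le u\ominus v$ we obtain $w_2=u\ominus v$. Cancelling $u\ominus v$ in $w_1\oplus(u\ominus v)=u=v\oplus(u\ominus v)$ yields $w_1=v$. Hence $v=\bigoplus_{i\in I}^{\Mea(E)}v^1_i$ and $u\ominus v=\bigoplus_{i\in I}^{\Mea(E)}v^2_i$ both exist, and the remaining clauses are precisely those inherited from Proposition~\ref{ssxhcmea} (reading ${v^2_i}'$ where the statement writes ${v^k_i}'$).

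I do not expect any genuine difficulty here: the whole combinatorial core --- the Zorn's-lemma construction of the splitting families --- is already packaged inside Proposition~\ref{ssxhcmea}, and the passage from the inequalities $w_1\le v$, $w_2\le u\ominus v$ to equalities is a routine cancellation valid in any generalized effect algebra. The only point to be mildly careful about is to set up the specialization $v_1=v$, $v_2=u\ominus v$ so that the various primed inequalities in the conclusion of Proposition~\ref{ssxhcmea} land on the intended elements.
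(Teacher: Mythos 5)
Your proposal is correct and follows essentially the same route as the paper: both apply Proposition~\ref{ssxhcmea} with $v_1=v$, $v_2=u\ominus v$ and then upgrade $w_1\le v$, $w_2\le u\ominus v$ to equalities by cancellation (the paper cancels in $u=u\oplus(v\ominus w_1)\oplus((u\ominus v)\ominus w_2)$, you use monotonicity of $\ominus$ plus antisymmetry, which is the same routine argument).
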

\begin{proof} From Proposition \ref{ssxhcmea} we know that 
there are orthogonal families $(v^{1}_i)_{i\in I}$ and 
$(v^{2}_i)_{i\in I}$ such that 
$w_1=\bigoplus_{i\in I}^{\Mea(E)} v^{1}_i\leq v$ exists,  
$w_2=\bigoplus_{i\in I}^{\Mea(E)} v^{2}_i\leq u\ominus v$ exists, $u=w_1\oplus w_2$
and, for all $i\in I$, 
$v^{1}_i\oplus v^{2}_i= u_i$, $v\leq {v^{1}_i}'$ and 
$u\ominus v\leq {v^{2}_i}'$. We have 
$u=v\oplus (u\ominus v)= w_1 \oplus (v\ominus w_1)\oplus %
w_2 \oplus ((u\ominus v)\ominus w_2)= u\oplus %
(v\ominus w_1) \oplus ((u\ominus v)\ominus w_2)$. 
Therefore $0=v\ominus w_1=(u\ominus v)\ominus w_2$. This yields 
$v=w_1$ and $u\ominus v=w_2$.
\end{proof}

Let $E$ be an meager-orthocomplete
 effect algebra. 
Let $u\in E$. We put 
$\vartheta(u)=\{w\in E \mid w=v \ \text{or}\ w=u\ominus v, %
(u_i)_{i\in I}$ is an orthogonal family such that 
$v=\bigoplus_{i\in I}^{\Mea(E)} u_i\in \Mea(E)$ exists, $v\leq u$ 
and, for all $i\in I$, 
$u\leq u_i'\}$. Clearly, $\{ 0, u\}\subseteq \vartheta(u)$. 
Recall that, for sharply dominating  and homogeneous  $E$ 
and any element 
$v\in \vartheta(u)$ of the above form $\bigoplus_{i\in I}^{\Mea(E)} u_i$,  
we have by Corollary \ref{secorssxhcmea} 
$[0, v]\cup [u\ominus v, u]\subseteq \vartheta(u)$. 
Note also that, for $u\in \Sh(E)$, we obtain 
that $\vartheta(u)=\{ 0, u\}$.
Therefore, we have a map $\Theta:2^E \to 2^E$ defined by 
$\Theta(A)=\bigcup\{\vartheta(u) \mid u\in A\}$ for all 
$A\subseteq E$. The above considerations yield that 
$A\cup \{0\}\subseteq \Theta(A)$.

As in \cite[Theorem 8]{jenca}, for any set $A\subseteq E$, 
$\sigma(A)$ is the smallest superset of $A$ closed with respect to
$\Theta$. Clearly,  $\sigma(A)=\bigcup_{n=0}^{\infty} A_n$,  
where $A_n$ are subsets of $E$ given by the
rules $A_0 = A$, $A_{n + 1} = \Theta(A_n)$.

\begin{lemma}\label{uya}
Let $E$ be a homogeneous meager-orthocomplete
 effect algebra. Let 
$A\subseteq E$ be an internally compatible subset of $E$. 
Then $\sigma(A)$ is internally compatible. 
\end{lemma}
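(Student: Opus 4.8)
The goal is to show that $\sigma(A)=\bigcup_{n=0}^\infty A_n$ is internally compatible whenever $A$ is. Since internal compatibility is a property of finite subsets, it suffices to show that every finite subset $F\subseteq\sigma(A)$ is contained in some internally compatible subset, and the natural candidate is a block $B$ of $E$ containing $A$ (such a block exists by Statement~\ref{gejzasum}(iv)--(v) since $A$ is internally compatible, hence sits inside a maximal internally compatible set containing $1$). So the real content is: \emph{$\sigma(A)\subseteq B$ for any block $B\supseteq A$}. I would prove this by induction on $n$, showing $A_n\subseteq B$ for all $n$; the base case $A_0=A\subseteq B$ is the hypothesis, and the inductive step reduces to showing $\vartheta(u)\subseteq B$ whenever $u\in B$.

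\textbf{The inductive step.} Fix $u\in B$ and take $w\in\vartheta(u)$; by definition $w=v$ or $w=u\ominus v$ where $v=\bigoplus_{i\in I}^{\Mea(E)}u_i\in\Mea(E)$, $v\leq u$, and $u\leq u_i'$ for all $i\in I$. It is enough to show $v\in B$ (then $u\ominus v\in B$ since $B$ is a sub-effect algebra and $u\in B$). For this I would first observe that each \emph{finite} partial sum $s_K=\bigoplus_{i\in K}u_i$ (for finite $K\subseteq I$) lies in $B$: indeed $s_K\leq u$ and $s_K\leq u'$ (because $s_K\leq u_i'$... more carefully, from $u\leq u_i'$ we get $u_i\leq u'$ for each $i$, and an easy induction using homogeneity/Statement~\ref{gejzasum}(viii) — or directly the fact that $s_K\oplus u$ exists since $s_K\leq u'$ — shows $s_K\leq u'$), so by Statement~\ref{gejzasum}(viii) applied with $x=u\in B$ we get $s_K\in B$. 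Thus $\{s_K:K\subseteq I\text{ finite}\}\subseteq B$. Now $v=\bigvee\{s_K\}$ is the join computed in $\Mea(E)$, and I need to know this join is actually realized inside $B$, i.e.\ that $v\in B$. Here I would use that $B$ is itself a block — an internally compatible sub-effect algebra with RDP — and combine the meager-orthocompleteness of $E$ with the fact that the family $(u_i)$ is bounded above by $u\in B$ together with the characterization of blocks; the cleanest route is to note that the orthogonal sum of $(u_i)$ inside $B$ (which exists if $B$ inherits enough completeness) must coincide with $v$ by uniqueness of orthogonal sums, but since we cannot assume $B$ is meager-orthocomplete I would instead argue that $v$ is compatible with every finite subset of $B$: given finite $G\subseteq B$, since each $s_K\comp g$ for $g\in G$ (as $s_K\in B$ and $B$ has RDP) one passes to the limit using (IDL) to get $v\comp g$, and then maximality of $B$ forces $v\in B$.

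\textbf{Main obstacle.} The delicate point is precisely the passage from ``all finite partial sums $s_K$ lie in $B$'' to ``$v=\bigoplus^{\Mea(E)}u_i$ lies in $B$.'' Blocks are only assumed to be maximal internally compatible subsets, not orthocomplete or closed under existing suprema, so one cannot simply say ``the sum is computed in $B$.'' The key lemma to invoke is the compatibility characterization: $x\comp y$ iff there exist $p,q,r$ with $x=p\oplus q$, $y=q\oplus r$, $p\oplus q\oplus r$ defined, equivalently (when $x\wedge y$ exists) $x\oplus(y\ominus(x\wedge y))$ exists. I expect the argument to run: for any $b\in B$, each $s_K$ is compatible with $b$ via witnesses inside $B$; using (IDL) and the fact that $v\wedge b$ can be controlled (via $s_K\wedge b\nearrow v\wedge b$, which needs the infinite distributivity and the orthocompleteness of $\Mea(E)$ to justify that the relevant joins exist and behave), one shows $v\oplus(b\ominus(v\wedge b))$ exists, hence $v\comp b$. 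Since this holds for every finite subset of $B\cup\{v\}$ (combining with internal compatibility of $B$ and RDP to get a common orthogonal refinement), $B\cup\{v\}$ is internally compatible, so by maximality $v\in B$. That compatibility-to-the-limit step, making sure all the joins and orthosums genuinely exist in $E$ using meager-orthocompleteness, is where the work lies; everything else is bookkeeping with the definition of $\Theta$ and induction on $n$.
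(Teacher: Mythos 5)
Your strategy proves a weaker statement than Lemma \ref{uya} and rests on an embedding that the paper does not provide. Internal compatibility requires the refining orthogonal family to be drawn \emph{from the set itself}, and it is not inherited by subsets: even if you established $\sigma(A)\subseteq B$ for a block $B\supseteq A$, finite subsets of $\sigma(A)$ would only acquire refinements inside $B$, i.e.\ you would conclude that $\sigma(A)$ is compatible, not internally compatible, which is what the lemma asserts for an arbitrary internally compatible $A$ (for which $\sigma(A)$ is in general a proper subset of any block). Moreover the very first reduction is unsupported: Statement \ref{gejzasum}(v) yields a block only over a \emph{finite} compatible set; a maximal internally compatible superset of an infinite $A$ obtained from Zorn's lemma need not contain $1$ (to adjoin $1$ one would need, for every finite $F\subseteq A$, a refinement of $F$ inside $A$ whose total sum is $1$), hence need not be a block. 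Note also that Lemma \ref{uya} does not assume sharp domination, so several tools you implicitly lean on (Corollary \ref{duscduya}, Corollary \ref{secorssxhcmea}, Proposition \ref{dusminimax}) are unavailable at this level of generality.

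The decisive gap is the step from ``all finite partial sums $s_K$ lie in $B$'' to ``$v=\bigoplus_{i\in I}^{\Mea(E)}u_i\in B$''. You propose to show that $B\cup\{v\}$ is internally compatible and invoke maximality of $B$; but $v\in\Theta(B)$, so this is exactly an instance of the lemma being proved --- the whole content has been displaced into the part you call bookkeeping. Concretely: (a) the limit step from $s_K\comp b$ for all finite $K$ to $v\comp b$ is not justified; it presupposes the existence and good behaviour of $v\wedge b$ and some reason why compatibility survives the supremum, and (IDL) alone gives neither; (b) even granting $v\comp b$ for every $b\in B$, pairwise compatibility does not produce, for a finite $G\subseteq B$, a common orthogonal refinement of $G\cup\{v\}$ taken from $B\cup\{v\}$ --- pairwise compatibility does not imply joint, let alone internal, compatibility. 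Producing such refinements is precisely what the proof of \cite[Theorem 8]{jenca}, which the paper follows verbatim, does: for a finite subset of $\Theta(A)$ it splits the defining orthogonal families $(u_i)$ along a refinement of the underlying finite subset of $A$, using homogeneity in the manner of Propositions \ref{xhcmea} and \ref{ssxhcmea}, and the resulting pieces remain in $\sigma(A)$ by the closure under $\Theta$. A minor additional flaw: your justification that $s_K\in B$ via $s_K\le u'$ is circular, since $s_K\le u'$ is exactly what is not known; the correct route is that each $u_i\le u$ and $u_i\le u'$, so $u_i\in B$ by Statement \ref{gejzasum}(viii), and $B$, being a sub-effect algebra, contains the existing finite orthosums $s_K$ --- but this repair does not touch the main obstruction above.
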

\begin{proof} The proof goes literally the same way 
as in \cite[Theorem 8]{jenca}. Hence we omit it.
\end{proof}

\begin{corollary}\label{blockua} Let $E$ be 
a  homogeneous meager-orthocomplete sharply dominating 
 effect algebra. 
For every block
$B$ of $E$, $\sigma(B)=\Theta(B) = B$.
\end{corollary}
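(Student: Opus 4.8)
The plan is to read off the result directly from the maximality characterisation of blocks in Statement~\ref{gejzasum}(iv) together with Lemma~\ref{uya}, so the work is almost entirely bookkeeping. Recall that a subset $B\subseteq E$ is a block exactly when it is a maximal internally compatible subset of $E$ containing $1$; in particular $B$ itself is internally compatible, and $0,1\in B$ since a block is a sub-effect algebra.

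First I would invoke Lemma~\ref{uya}: since $B$ is internally compatible, so is $\sigma(B)$. Next, in the defining chain $A_0=B$, $A_{n+1}=\Theta(A_n)$ with $\sigma(B)=\bigcup_{n=0}^\infty A_n$, one has $B=A_0\subseteq\sigma(B)$, hence $1\in\sigma(B)$. Thus $\sigma(B)$ is an internally compatible subset of $E$ containing $1$ and containing the block $B$. By the maximality in Statement~\ref{gejzasum}(iv), this forces $\sigma(B)=B$.

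It then remains to identify $\Theta(B)$ with $B$. Using the general fact $A\cup\{0\}\subseteq\Theta(A)$ recorded just after the definition of $\Theta$, we get $A_n\subseteq A_n\cup\{0\}\subseteq\Theta(A_n)=A_{n+1}$ for every $n$, so the chain $A_0\subseteq A_1\subseteq A_2\subseteq\cdots$ is increasing. Consequently $B=A_0\subseteq A_1=\Theta(B)\subseteq\bigcup_{n}A_n=\sigma(B)=B$, which gives $\Theta(B)=B$ and finishes the argument.

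There is essentially no hard step here once Lemma~\ref{uya} is in hand: that lemma (proved by transcribing Jen\v{c}a's argument for \cite[Theorem~8]{jenca}) is the only substantive input, and the remainder is a short deduction from the increasing chain $(A_n)$ and the maximality of $B$ among internally compatible subsets containing $1$. If one wanted to be cautious, the single point worth double-checking is that $\sigma(B)$ really is a subset of $E$ (so that the maximality statement applies), which is immediate from $\Theta\colon 2^E\to 2^E$.
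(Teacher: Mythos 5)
Your argument is correct and is exactly the intended derivation: the paper states this as an immediate corollary of Lemma~\ref{uya} together with Statement~\ref{gejzasum}(iv), namely that $\sigma(B)\supseteq B$ is internally compatible and contains $1$, so maximality gives $\sigma(B)=B$, and $B\subseteq\Theta(B)\subseteq\sigma(B)$ then yields $\Theta(B)=B$. Nothing is missing.
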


\begin{proposition}\label{cduya}
Let $E$ be a 
homogeneous meager-orthocomplete
 effect algebra, let $x\in \Mea(E)$. Let 
$(x_i)_{i\in I}$ be a maximal orthogonal family such that, 
for all $i \in I$, $x_i \leq x'$ and, for all $F\subseteq I$ finite, 
$\bigoplus_{i\in F} x_i\leq x$.  Then $x=\bigoplus_{i\in I}^{\Mea(E)} x_i$. 
\end{proposition}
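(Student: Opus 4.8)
The plan is to put $v:=\bigoplus_{i\in I}^{\Mea(E)}x_i$, show that this sum exists and that $v\le x$, and then prove $x\ominus v=0$ by combining the maximality of $(x_i)_{i\in I}$ with homogeneity and Proposition~\ref{xhcmea}. First I would observe that $(x_i)_{i\in I}$ is an orthogonal family \emph{in} $\Mea(E)$ which is bounded there by $x$: every finite partial sum $\bigoplus_{i\in F}x_i$ lies in $[0,x]\subseteq\Mea(E)$ since $\Mea(E)$ is a downset. As $E$ is meager-orthocomplete, $v:=\bigoplus_{i\in I}^{\Mea(E)}x_i$ therefore exists in $\Mea(E)$, and since $x\in\Mea(E)$ is an upper bound of all these finite partial sums, $v\le x$. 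Set $c:=x\ominus v$ and suppose, towards a contradiction, that $c\ne 0$.

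The second step is to read off from maximality that $[0,c]\cap[0,x']=\{0\}$. Indeed, if $0\ne y$ satisfies $y\le c$ and $y\le x'$, then adjoining $y$ to $(x_i)_{i\in I}$ at a fresh index produces a strictly larger orthogonal family all of whose members are $\le x'$ and all of whose finite partial sums are $\le x$: for finite $F\subseteq I$ one has $\bigoplus_{i\in F}x_i\le v$ and $y\le c\le v'$, so $(\bigoplus_{i\in F}x_i)\oplus y$ exists and is $\le v\oplus y\le v\oplus c=x$. This contradicts the maximality of $(x_i)_{i\in I}$, establishing the claim.

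The main step is then to show that $c$ is sharp; since $c\le x\in\Mea(E)$ forces $c\in\Mea(E)$, and the only sharp meager element is $0$, this yields $c=0$, the desired contradiction. So let $z\le c$ with $z\le c'$. From $v\oplus c=x$ one computes $c'=v\oplus x'$ and $c=(v\oplus x')'$, hence $z\le v\oplus x'\le z'$, and homogeneity gives $z=z_1\oplus z_2$ with $z_1\le v$ and $z_2\le x'$. As $z_2\le z\le c$ and $z_2\le x'$, the previous step forces $z_2=0$, so $z=z_1\le v$; moreover $z\le c\le v'$, so $z\le v\le z'$. Now Proposition~\ref{xhcmea}, applied to the family $(x_i)_{i\in I}$ (whose $\Mea(E)$-sum is $v\in\Mea(E)$) with $z$ in the role of $u$, produces an orthogonal family $(z_i)_{i\in I}$ with $z=\bigoplus_{i\in I}^{\Mea(E)}z_i$ and $z_i\le x_i$ for all $i$. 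If $z\ne 0$, then $z_{i_0}\ne 0$ for some $i_0$, and then $0\ne z_{i_0}\le x_{i_0}\le x'$ while $z_{i_0}\le z\le c$, contradicting $[0,c]\cap[0,x']=\{0\}$. Hence $z=0$, so $c\wedge c'=0$; thus $c$ is sharp, $c=0$, and $x=v=\bigoplus_{i\in I}^{\Mea(E)}x_i$.

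I expect the crux to be exactly this last passage, from ``$c$ has no nonzero part below $x'$'' to ``$c$ has no nonzero part below $c'$'', i.e. to the sharpness of $c$: the point is that homogeneity pushes an arbitrary $z\le c\wedge c'$ inside $v$, and Proposition~\ref{xhcmea} then decomposes $z$ along the $x_i$'s, each of which already sits below $x'$, so that a single nonzero component of $z$ collides with the maximality-derived equality. The remaining ingredients — existence of $v$ and $v\le x$, the extension computation in the maximality step, and the implication ``sharp and meager $\Rightarrow 0$'' (together with the degenerate cases $x=0$ and $I=\emptyset$, which are subsumed) — are routine.
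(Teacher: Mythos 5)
Your proof is correct and takes essentially the same route as the paper's: both show that $c=x\ominus\bigoplus_{i\in I}^{\Mea(E)}x_i$ is sharp by taking $z\le c,c'$, using homogeneity on $z\le c'=v\oplus x'$ together with maximality to push $z$ below $v$, then applying Proposition~\ref{xhcmea} and maximality again to kill each component, and finally invoking $\Sh(E)\cap\Mea(E)=\{0\}$. Your only deviation is cosmetic: you factor the maximality argument out as the explicit statement $[0,c]\cap[0,x']=\{0\}$ and use it twice, where the paper invokes maximality directly in both places.
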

\begin{proof} Since $\Mea(E)$ 
is an orthocomplete generalized effect algebra we have that 
$\bigoplus_{i\in I}^{\Mea(E)} x_i$ exists. As in \cite[Theorem 13]{jenca} 
we will prove that $x\ominus \bigoplus_{i\in I}^{\Mea(E)} x_i\in \Sh(E)$. 
Let $r\in E$ be such that 
$$
r\leq x\ominus %
\bigoplus_{i\in I}^{\Mea(E)} x_i, (x\ominus \bigoplus_{i\in I}^{\Mea(E)} x_i)'.
$$
We get that 
$$
r\leq (x\ominus \bigoplus_{i\in I}^{\Mea(E)} x_i)'=%
x'\oplus \bigoplus_{i\in I}^{\Mea(E)} x_i \leq r'.
$$
Therefore there are $r_1, r_2\in E$ such that $r=r_1\oplus r_2$, 
$r_1\leq x'$, $r_2\leq \bigoplus_{i\in I}^{\Mea(E)} x_i $. We have also that 
$r_1\oplus \bigoplus_{i\in I}^{\Mea(E)} x_i\leq x$, i.e. $r_1=0$  and $r_2=r$ by maximality 
of  $(x_i)_{i\in I}$. Then $r\leq \bigoplus_{i\in I}^{\Mea(E)} x_i \leq r'$. 
Proposition \ref{xhcmea} yields that there is an orthogonal family $(u_i)_{i\in I}$ such that 
$r=\bigoplus_{i\in I}^{\Mea(E)} u_i$ exists and $u_i\leq x_i$ for all 
$i\in I$. Hence, for all $j\in I$, 
$$
u_j\leq r\leq x\ominus \bigoplus_{i\in I}^{\Mea(E)} x_i, \ \text{i.e.,}\ 
u_j\oplus \bigoplus_{i\in I}^{\Mea(E)} x_i\leq x\ \text{and}\ u_j\leq x_j\leq x'.
$$
Thus again by maximality of  $(x_i)_{i\in I}$ we get that $u_j=0$. It follows that $r=0$ 
and $x\ominus \bigoplus_{i\in I}^{\Mea(E)} x_i\in \Sh(E)\cap {\Mea(E)}=\{0\}$, i.e. 
$x= \bigoplus_{i\in I}^{\Mea(E)} x_i$. 
\end{proof}

\begin{proposition}\label{corcduya}
Let $E$ be a  
homogeneous meager-orthocomplete sharply dominating
 effect algebra and let $x\in E$.
Let 
$(x_i)_{i\in I}$ be a maximal orthogonal family such that, 
for all $i \in I$, $x_i \leq x'$ and, for all $F\subseteq I$ finite, 
$\bigoplus_{i\in F} x_i\leq x$.  Then 
$x=\widetilde{x}\oplus\bigoplus_{i\in I}^{\Mea(E)} x_i$. 
Moreover, for every block $B$ of $E$, $x \in B$ implies 
that $[\widetilde{x}, x]\subseteq B$ and 
$[x, \widehat{x}]\subseteq B$.
\end{proposition}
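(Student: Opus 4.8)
The plan is to split the first assertion off the sharp/meager decomposition of $x$ and reduce it to Proposition~\ref{cduya}, and to deduce the block statement from Corollary~\ref{blockua} (blocks are $\Theta$\nobreakdash-closed) together with the splitting provided by Corollary~\ref{secorssxhcmea}. Since $E$ is sharply dominating, $\widetilde{x}$ and $\widehat{x}$ exist (so the two intervals make sense), and by Corollary~\ref{gejza} we have $x=\widetilde{x}\oplus m$ with $m:=x\ominus\widetilde{x}\in\Mea(E)$ and, by Statement~\ref{jpy2}, $\widetilde{x}\wedge m=0$.

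\textbf{First part.} I would first note that, since each $x_i\le x'$ and $\bigoplus_{i\in F}x_i\le x$ for every finite $F$, Lemma~\ref{exssuplem} upgrades this to $\bigoplus_{i\in F}x_i\le x\ominus\widetilde{x}=m$; hence the family $(x_i)_{i\in I}$ is bounded in $\Mea(E)$ by $m$, and meager-orthocompleteness gives that $\bigoplus_{i\in I}^{\Mea(E)}x_i$ exists (this is needed even for the statement to make sense). The crucial point is then to show that $(x_i)_{i\in I}$ is not only maximal relative to $x$ but also a \emph{maximal} orthogonal family among those with $x_i\le m'$ and $\bigoplus_{i\in F}x_i\le m$ for all finite $F$. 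Given a hypothetical extra element $z\le m'$ with $z\oplus\bigoplus_{i\in F}x_i\le m$ for every finite $F$, I would use $m'=\widetilde{x}\oplus x'$ together with $z\le m$ to check that $\widetilde{x}\oplus x'\le z'$, i.e.\ $z\le\widetilde{x}\oplus x'\le z'$; since $\widetilde{x}\in\Sh(E)$, Lemma~\ref{xshom} forces $z\le x'$. But then $z\oplus\bigoplus_{i\in F}x_i\le m\le x$ for every finite $F$, so adjoining $z$ (on a fresh index) to $(x_i)_{i\in I}$ still gives a valid family relative to $x$, contradicting maximality unless $z=0$. Now Proposition~\ref{cduya}, applied to $m\in\Mea(E)$ and the maximal family $(x_i)_{i\in I}$, yields $m=\bigoplus_{i\in I}^{\Mea(E)}x_i$, i.e.\ $x=\widetilde{x}\oplus\bigoplus_{i\in I}^{\Mea(E)}x_i$.

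\textbf{The block statement.} Fix a block $B$ with $x\in B$. From $x_i\le x'$ we get $x\le x_i'$, so the family $(x_i)_{i\in I}$ witnesses that $m=\bigoplus_{i\in I}^{\Mea(E)}x_i\in\vartheta(x)$ and hence also $\widetilde{x}=x\ominus m\in\vartheta(x)$; by Corollary~\ref{blockua}, $\vartheta(x)\subseteq\Theta(B)=B$, so $m,\widetilde{x}\in B$. Next, for $w\in[\widetilde{x},x]$ one has $w\ominus\widetilde{x}\le m$ with $w\ominus\widetilde{x},m\in\Mea(E)$ and $m\le x\le x_i'$, so Corollary~\ref{secorssxhcmea} (with $u=m$, $v=w\ominus\widetilde{x}$, $u_i=x_i$) produces an orthogonal family $(v^{1}_i)_{i\in I}$ with $w\ominus\widetilde{x}=\bigoplus_{i\in I}^{\Mea(E)}v^{1}_i$ and $v^{1}_i\le x_i\le x'$; thus $w\ominus\widetilde{x}\in\vartheta(x)\subseteq B$, and since $\widetilde{x}\in B$ we conclude $w=\widetilde{x}\oplus(w\ominus\widetilde{x})\in B$. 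This gives $[\widetilde{x},x]\subseteq B$. Finally, applying this to $x'\in B$ yields $[\widetilde{x'},x']\subseteq B$, and since $\widetilde{x'}=(\widehat{x})'$ by Lemma~\ref{suplem}(i), complementing shows that every $w\in[x,\widehat{x}]$ has $w'\in[(\widehat{x})',x']=[\widetilde{x'},x']\subseteq B$, whence $w\in B$ and $[x,\widehat{x}]\subseteq B$.

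\textbf{Main obstacle.} The delicate step is the transfer of maximality from $x$ to its meager part $m=x\ominus\widetilde{x}$ in the first part: this is where one must combine the decomposition $x=\widetilde{x}\oplus m$, the identity $m'=\widetilde{x}\oplus x'$, and homogeneity through Lemma~\ref{xshom}. Everything else is either a direct invocation of an earlier result (Proposition~\ref{cduya}, Corollary~\ref{secorssxhcmea}, Corollary~\ref{blockua}) or routine manipulation of $\oplus$ and $\ominus$.
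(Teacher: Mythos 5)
Your proof is correct and follows essentially the same route as the paper: transfer the maximal orthogonal family from $x$ to $x\ominus\widetilde{x}$, apply Proposition~\ref{cduya}, and then use $\vartheta(x)\subseteq\Theta(B)=B$ (Corollary~\ref{blockua}) together with Corollary~\ref{secorssxhcmea} to get $[\widetilde{x},x]\subseteq B$, dualizing via $x'$ for $[x,\widehat{x}]$. The only cosmetic difference is that you justify the maximality transfer directly from $m'=\widetilde{x}\oplus x'$ and Lemma~\ref{xshom}, where the paper cites Lemma~\ref{xssuplem}; these amount to the same argument.
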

\begin{proof} Note first that by Lemma \ref{xssuplem} and Lemma \ref{exssuplem} 
$(x_i)_{i\in I}$ is a maximal orthogonal family such that, 
for all $i \in I$, $x_i \leq (x\ominus \widetilde{x})'$ and, for all $F\subseteq I$ finite, 
$\bigoplus_{i\in F} x_i\leq x\ominus \widetilde{x}$. From Proposition \ref{cduya} we get 
that $\bigoplus_{i\in I}^{\Mea(E)} x_i$ exists and 
$\bigoplus_{i\in I}^{\Mea(E)} x_i=x\ominus \widetilde{x}$. 
Let  $B$ be a block of $E$, $x \in B$. Since 
$\bigoplus_{i\in I}^{\Mea(E)} x_i\in \vartheta({x})\subseteq B$ 
we get by Corollary \ref{secorssxhcmea} 
that $[0, x\ominus \widetilde{x}]\subseteq B$. Therefore $\widetilde{x}\in B$ and 
$[\widetilde{x}, x]=\widetilde{x}\oplus [0, x\ominus \widetilde{x}]\subseteq B$. 
Following the same reasonings for $x'$ we get that 
$[\widetilde{x'}, x']\subseteq B$. Hence also $
[x, \widehat{x}]=[x, \widetilde{x'}']\subseteq B$.
\end{proof}

\begin{corollary}\label{duscduya}
Let $E$ be a  homogeneous meager-orthocomplete
 sharply dominating effect algebra, let $x\in \Mea(E)$.
Then, for every block $B$ of $E$, $x \in B$ implies 
that $[0, x] \subseteq B$ and, moreover, $\Mea(B)\subseteq \Mea(E)$.
\end{corollary}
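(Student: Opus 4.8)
The plan is to obtain both assertions as consequences of Proposition~\ref{corcduya} together with the identification $\C(B)=\Sh(E)\cap B$ from Statement~\ref{gejzasum}(vii). For the first assertion I would simply observe that $x\in\Mea(E)$ means $\widetilde{x}=0$, so the ``moreover'' part of Proposition~\ref{corcduya}, applied to $x$, yields $[0,x]=[\widetilde{x},x]\subseteq B$ for every block $B$ containing $x$, which is exactly what is claimed.

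For the inclusion $\Mea(B)\subseteq\Mea(E)$ I would fix an arbitrary block $B$ and an element $y\in\Mea(B)$, and argue as follows. Since $E$ is sharply dominating, the greatest sharp element $\widetilde{y}$ of $E$ below $y$ exists, with $\widetilde{y}\in\Sh(E)$ and $\widetilde{y}\le y$. Because $y\in B$, Proposition~\ref{corcduya} gives $[\widetilde{y},y]\subseteq B$, so $\widetilde{y}\in B$ and hence $\widetilde{y}\in\Sh(E)\cap B=\C(B)$ by Statement~\ref{gejzasum}(vii); being central, $\widetilde{y}$ is principal and therefore sharp, so $\widetilde{y}\in\Sh(B)$. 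Using that $B$ is a sub-effect algebra, from $\widetilde{y}\oplus(y\ominus\widetilde{y})=y$ with $\widetilde{y},y\in B$ I get $y\ominus\widetilde{y}\in B$, so $\widetilde{y}\le y$ holds already in $B$. Thus $\widetilde{y}$ is a sharp element of $B$ lying below $y$ in $B$, and $y\in\Mea(B)$ forces $\widetilde{y}=0$. Consequently, if $v\in\Sh(E)$ and $v\le y$, then $v\le\widetilde{y}=0$, so $v=0$; hence $y\in\Mea(E)$, and $\Mea(B)\subseteq\Mea(E)$.

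The whole argument is short once Proposition~\ref{corcduya} is in hand. The only delicate points I would be careful about are the transitions between sharpness and the partial order computed ``in $B$'' versus ``in $E$'' — settled by Statement~\ref{gejzasum}(vii) together with the sub-effect-algebra property of $B$ — and the appeal to sharp domination of $E$ to guarantee both that $\widetilde{y}$ exists and that it dominates every sharp element of $E$ lying below $y$. I do not anticipate any genuine obstacle beyond this bookkeeping; the one conceptual step is recognizing that Proposition~\ref{corcduya}, applied to an \emph{arbitrary} $y\in B$, already forces $\widetilde{y}$ into $B$.
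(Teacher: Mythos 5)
Your proof is correct and follows essentially the same route as the paper: the second half (forcing $\widetilde{y}\in B$, hence $\widetilde{y}\in\Sh(B)$ with $\widetilde{y}\le y$, hence $\widetilde{y}=0$ and so every $v\in\Sh(E)$ below $y$ is $0$) is exactly the paper's argument, only spelled out in more detail. For the first inclusion you cite the ``moreover'' part of Proposition~\ref{corcduya} with $\widetilde{x}=0$, whereas the paper invokes Corollary~\ref{secorssxhcmea} and Proposition~\ref{cduya} directly, but these are precisely the ingredients of that part of Proposition~\ref{corcduya}, so this is only a cosmetic shortcut and introduces no circularity.
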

\begin{proof} Since $x\in \Mea(E)$ we get by 
Corollary \ref{secorssxhcmea} and 
Proposition \ref{cduya} that 
$[0, x]\subseteq \vartheta(x)\subseteq B$.

Now, let $y\in \Mea(B)\subseteq B$, $z\in\Sh(E)$, $z\leq y$. 
Then $z\leq \widetilde{y}\in B$ and $\widetilde{y}\in\Sh(B)$. 
Hence $\widetilde{y}=0$. This yields $z=0$.
\end{proof}

As in \cite[Proposition 16]{jenca} we have that 

\begin{proposition}\label{ocmdcduya}
Let $E$ be a  homogeneous meager-orthocomplete
 sharply dominating  effect algebra, let 
$x \in \Mea(E)$. Then $[0, x]$ is a complete MV-effect algebra.
\end{proposition}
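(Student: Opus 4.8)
\textbf{Proof proposal for Proposition \ref{ocmdcduya}.}

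The plan is to follow the strategy of \cite[Proposition 16]{jenca}, adapting each step to the present, more general setting. Fix $x\in\Mea(E)$. We first observe that $[0,x]$ is an effect algebra with greatest element $x$, with orthosupplementation $z\mapsto x\ominus z$ (a standard fact about intervals in an effect algebra). Moreover, since $x\in\Mea(E)$ is a downset, every $z\in[0,x]$ lies in $\Mea(E)$; in particular $[0,x]\subseteq\Mea(E)$, so all the orthogonal sums we form inside $[0,x]$ are computed in $\Mea(E)$, and by meager-orthocompleteness every bounded orthogonal family in $[0,x]$ has a sum in $[0,x]$. Thus $[0,x]$ is an \emph{orthocomplete} effect algebra in its own right. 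The remaining two things to check are: (a) $[0,x]$ is homogeneous, hence — being also compatible, since it has RDP once we verify it — it has the Riesz decomposition property; and (b) $[0,x]$ is a lattice. Together (a) and (b) say $[0,x]$ is a complete MV-effect algebra (completeness of the lattice follows from orthocompleteness plus the lattice property by a routine argument, exactly as in the Jen\v ca proof).

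First I would prove homogeneity of $[0,x]$, and in fact RDP directly. Let $u,v_1,v_2\in[0,x]$ with $u\le v_1\oplus v_2$ (the orthosum taken in $[0,x]$, which agrees with the one in $E$). We must produce $u_1\le v_1$, $u_2\le v_2$ in $[0,x]$ with $u=u_1\oplus u_2$. The point is that although $E$ need not satisfy RDP, it is homogeneous, and the extra hypothesis $v_1\oplus v_2\le x\le u'$ needed to apply homogeneity in $E$ comes for free here: we have $v_1\oplus v_2\le x$, and since $x\in\Mea(E)$ and $u\le x$ we may use meager-orthocompleteness to get $2u\le x$-type control. Concretely, I expect to use Proposition \ref{cduya} / Corollary \ref{secorssxhcmea}: writing $w=v_1\oplus v_2\in\Mea(E)$ and applying the block machinery, any finite compatible subset of $[0,x]$ lies in a single block $B$ with $[0,x]\subseteq B$ when $x\in B$ (Corollary \ref{duscduya}), and inside a block RDP holds. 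So the heart of the argument is: \emph{$[0,x]$ is internally compatible}, because $x$ is meager and, by Corollary \ref{duscduya}, $[0,x]\subseteq B$ for any block $B$ containing $x$ (such a block exists by Statement \ref{gejzasum}(v)); then Statement \ref{gejzasum}(iii) applied to the effect algebra $[0,x]$ — which is homogeneous as a sub-interval, and compatible — gives RDP on $[0,x]$.

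Next I would establish that $[0,x]$ is a lattice. Given $a,b\in[0,x]$, I would build a lower bound for $\{a,b\}$ and show it is the greatest one by a maximality/Zorn argument analogous to Proposition \ref{cduya}: take a maximal orthogonal family $(c_i)_{i\in I}$ with each $c_i\le a$, $c_i\le b$, and all finite subsums $\le a$ and $\le b$; meager-orthocompleteness gives $c=\bigoplus_i^{\Mea(E)}c_i\in[0,x]$; then $c\le a$, $c\le b$, and maximality together with RDP on $[0,x]$ forces $c=a\wedge b$. Dually, or via the orthosupplementation $z\mapsto x\ominus z$ on $[0,x]$, one gets $a\vee b=x\ominus((x\ominus a)\wedge(x\ominus b))$. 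Finally, orthocompleteness of $[0,x]$ upgrades the lattice to a complete lattice: an arbitrary family $\{a_j\}$ in $[0,x]$ is automatically bounded by $x$, and one replaces it by the orthogonal family of "new pieces" in the usual way to obtain $\bigvee a_j$. Hence $[0,x]$ is a complete lattice effect algebra with RDP, i.e.\ a complete MV-effect algebra.

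The main obstacle is verifying RDP (equivalently, homogeneity-plus-compatibility) on $[0,x]$ without RDP available in $E$. This is exactly where meagerness of $x$ is essential: it is what guarantees, via Corollary \ref{duscduya}, that $[0,x]$ is contained in a single block, which is where the Riesz decomposition happens. Once $[0,x]\subseteq B$ for a block $B$, every finite configuration in $[0,x]$ can be decomposed inside $B$, and a short check (using that $[0,x]$ is a downset, so all the pieces land back in $[0,x]$) transfers the decomposition to $[0,x]$ itself. The lattice and completeness parts are then routine, modelled verbatim on \cite[Proposition 16]{jenca} with $\bigoplus^{\Mea(E)}$ in place of the ambient orthocomplete sum.
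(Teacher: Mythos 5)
Your first half matches the paper: $[0,x]\subseteq\Mea(E)$ is orthocomplete because $\Mea(E)$ is, $x$ lies in some block $B$ by Statement \ref{gejzasum}(v), Corollary \ref{duscduya} gives $[0,x]\subseteq B$, and RDP transfers from $B$ to $[0,x]$ because $[0,x]$ is a downset. (Your earlier attempt to get homogeneity of $[0,x]$ directly from homogeneity of $E$ via ``$v_1\oplus v_2\le x\le u'$ comes for free'' is not sound, since $x\le u'$ is not automatic, but you then correctly fall back on the block argument, which is exactly the paper's route.)

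The genuine gap is in the lattice step. The paper does not prove the lattice property by hand: it invokes \cite[Theorem 4.10]{jencapulquo}, i.e.\ the nontrivial theorem that an orthocomplete effect algebra with the Riesz decomposition property is a lattice (hence a complete MV-effect algebra). You propose instead a ``routine'' Zorn-type argument: take a maximal orthogonal family whose finite subsums are below $a$ and $b$, sum it to $c$, and claim that maximality plus RDP forces $c=a\wedge b$. Maximality of the family only yields that $c$ is a \emph{maximal} lower bound of $a,b$ (if $v>c$ were a lower bound, you could adjoin $v\ominus c$); it does not yield that an arbitrary lower bound $d$ satisfies $d\le c$, and RDP alone cannot close this, since effect algebras with RDP need not be lattices at all (RDP corresponds to interpolation, not to lattice order). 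Uniqueness of maximal lower bounds is precisely the hard content here; in the paper it is obtained only later, in Proposition \ref{dusminimax}, and its proof \emph{uses} the present proposition (the complete MV structure of $[0,x]$ and $[0,y]$), so your route is circular unless you reprove the orthocomplete-plus-RDP-implies-lattice theorem, which is not a routine Zorn argument.
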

\begin{proof} Let $B$ be a block containing $x$. Since 
$[0, x]\subseteq B$ by Corollary \ref{duscduya} 
and $[0, x]$ is an orthocomplete effect algebra we obtain that 
$[0, x]$ is an orthocomplete effect algebra  satisfying the Riesz decomposition property. From  
\cite[Theorem 4.10]{jencapulquo} we get that 
$[0, x]$ is a lattice and hence a complete MV-effect algebra.
\end{proof}

Recall that  Proposition \ref{ocmdcduya} immediately yields 
(using the same considerations as in 
\cite[Proposition 19]{jenca}) that 
an orthocomplete generalized effect algebra of meager 
elements of a sharply dominating  homogeneous effect algebra  is a  commutative BCK-algebra with the relative cancellation property. Hence, by the result of J.~C\={\i}rulis  (see \cite{cirulis}) it 
is the dual of a weak implication algebra introduced in \cite{chhak}.

\begin{proposition} \label{minimax} 
Let $E$ be a  meager-orthocomplete
 effect algebra,
and let $y,z\in \Mea(E)$.
Every lower bound of $y,z$ is below a maximal one.
\end{proposition}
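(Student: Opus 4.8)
The plan is to use Zorn's lemma on the set of orthogonal families below a fixed lower bound, exploiting orthocompleteness of $\Mea(E)$ to pass to limits. Fix $y,z\in\Mea(E)$ and let $a\in E$ be a lower bound of $\{y,z\}$; note $a\in[0,y]\subseteq\Mea(E)$, so everything takes place inside $\Mea(E)$. I want to produce a maximal lower bound $m$ of $y,z$ with $a\le m$. Consider the set $\mathcal{F}$ of all orthogonal families $(t_\kappa)_{\kappa\in K}$ in $\Mea(E)$ such that $a\oplus\bigoplus_{\kappa\in K}^{\Mea(E)}t_\kappa$ exists and is still a lower bound of both $y$ and $z$; more precisely I should record at each stage the current approximation $a\oplus\bigoplus t_\kappa$ and require it to be $\le y$ and $\le z$. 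Order $\mathcal{F}$ by inclusion of index sets (with matching entries). The empty family lies in $\mathcal{F}$, so $\mathcal{F}\ne\emptyset$.

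Next I would check that the union of a chain in $\mathcal{F}$ is again in $\mathcal{F}$. Here the infinite distributive law (IDL) and orthocompleteness of $\Mea(E)$ do the work: for a chain $(Y_\alpha)_{\alpha\in\Lambda}$, every finite subfamily of the union $Y=\bigcup_\alpha Y_\alpha$ is contained in some $Y_{\alpha_0}$, hence its finite orthosum (together with $a$) is $\le y$ and $\le z$; so $Y^\oplus$ is bounded (by $y$, say), and since $\Mea(E)$ is orthocomplete, $\bigoplus_{\kappa\in Y}^{\Mea(E)}t_\kappa$ exists. By (IDL) applied to the directed system of finite partial sums, $a\oplus\bigoplus_{\kappa\in Y}^{\Mea(E)}t_\kappa=\bigvee\{a\oplus\bigoplus F\mid F\subseteq Y\text{ finite}\}\le y$, and likewise $\le z$. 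Thus $Y\in\mathcal{F}$ and Zorn's lemma supplies a maximal element $Z\in\mathcal{F}$; put $m=a\oplus\bigoplus_{\kappa\in Z}^{\Mea(E)}t_\kappa$, a lower bound of $y,z$ with $a\le m$.

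It remains to see that $m$ is a maximal lower bound of $\{y,z\}$. Suppose $m'\ge m$ is a lower bound of $y,z$ with $m'\ne m$. Then $d:=m'\ominus m$ is a nonzero element of $\Mea(E)$ (a difference of meager elements below a meager element), with $m\oplus d=m'\le y$ and $m\oplus d\le z$. Adjoining $d$ to the family $Z$ as a new entry indexed by a fresh index yields a strictly larger family in $\mathcal{F}$: the new approximation is exactly $m'=m\oplus d$, which is $\le y$ and $\le z$ by hypothesis, and orthogonality of the enlarged family follows since any finite subfamily's orthosum added to $a$ is dominated by $m'\le y$. This contradicts maximality of $Z$. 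Hence no such $m'$ exists, i.e.\ $m$ is a maximal lower bound above $a$, as desired.

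The main obstacle is the chain step: one must be careful that ``lower bound of $y,z$'' is preserved in the limit, which is precisely where orthocompleteness of $\Mea(E)$ (to guarantee existence of $\bigoplus_{\kappa\in Y}^{\Mea(E)}t_\kappa$) and (IDL) (to compute the supremum and compare it with $y$ and $z$) are both essential; without the meager-orthocompleteness hypothesis the supremum need not exist in $E$, let alone stay below $y$ and $z$. Everything else is bookkeeping with $\ominus$ inside the downset $[0,y]\subseteq\Mea(E)$.
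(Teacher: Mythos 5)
Your overall strategy --- Zorn's lemma on orthogonal families whose sums (together with the fixed lower bound $a$) stay below $y$ and $z$, orthocompleteness of $\Mea(E)$ to pass to limits along chains, and a one-element extension by $d=m'\ominus m$ to force maximality --- is exactly the strategy of the paper. The genuine defect is in the Zorn step as you set it up. Any member of your $\mathcal{F}$ can be strictly enlarged by adjoining a fresh index carrying the element $0$: this changes neither orthogonality nor the orthosum, so the enlarged family is again in $\mathcal{F}$ and properly larger. Hence $\mathcal{F}$, ordered as you order it, has \emph{no} maximal elements at all; and since you correctly show that every chain has an upper bound, the only reason this does not contradict Zorn's lemma is that $\mathcal{F}$ (families over arbitrary index sets) is a proper class rather than a set, so Zorn does not apply in the first place. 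Either way, the maximal family $Z$ on which your whole final paragraph rests does not exist in your setting. The paper's formulation is designed precisely to avoid this: it works with orthogonal \emph{multisets} containing $w$ in which $0$ occurs exactly once, and uses orthocompleteness (as in Observation~\ref{semiarch}) to note that every nonzero element has finite multiplicity, so the relevant collection is an honest set in which maximal members exist and cannot be padded by zeros. Your argument is repaired by the same device (insist that all entries $t_\kappa$ are nonzero, or pass to multisets of nonzero elements); with that change the extension step with $d\neq 0$ is sound (using Proposition~\ref{infasoc} to identify the new orthosum with $m\oplus d=m'$) and the proof becomes essentially the paper's.

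A smaller point: in the chain step you invoke (IDL) to get $a\oplus\bigoplus^{\Mea(E)}_{\kappa\in Y}t_\kappa\le y$, but (IDL) presupposes that $a\oplus\bigoplus^{\Mea(E)}_{\kappa\in Y}t_\kappa$ exists, which is not automatic from boundedness of $Y^\oplus$ by $y$. The clean route needs no (IDL): every finite partial sum satisfies $a\oplus\bigoplus F\le y$, i.e.\ $\bigoplus F\le y\ominus a$, and $y\ominus a\in\Mea(E)$ because $\Mea(E)$ is a downset; hence the orthosum computed in $\Mea(E)$ is $\le y\ominus a$, so it can be added to $a$ and the result is $\le y$, and likewise $\le z$. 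This is easily patched, but as written the (IDL) application assumes the existence it is meant to deliver.
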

\begin{proof}
Let $w$ be a lower bound of $y,z$. There exists a maximal orthogonal multiset
$A$ 
containing $w$ in which $0$ occurs uniquely
and for which $y,z$ are upper bounds of $A^\oplus$. 
Indeed, the multiset union of any maximal chain of such multisets is again
such multiset. 

Since $\Mea(E)$ is orthocomplete any non-zero 
element of $A$ has finite multiplicity. 
Again by orthocompleteness, 
there exists a smallest upper bound $u$ of $A^\oplus$
below $y,z$ and hence a lower bound of $y,z$ above $w$.
Let $v$ be an arbitrary lower bound of $y,z$ above $u$.
If $u<v$, the multiset sum $A\uplus \{v\ominus u\}$ is an orthogonal
multiset satisfying all requirements which is properly larger than $A$.
Hence $u=v$ is a maximal lower bound of $y,z$ over $w$.
\end{proof}

The proof of the following Proposition follows the proof 
from {\cite[Proposition 17]{jenca}}.

\begin{proposition} \label{dusminimax} 
Let $E$ be a  homogeneous meager-orthocomplete
 sharply dominating effect algebra. Then 
$\Mea(E)$  is a meet semilattice.
\end{proposition}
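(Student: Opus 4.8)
The plan is to combine Proposition~\ref{minimax} with a uniqueness argument: I will show that any two maximal lower bounds $w_1,w_2$ of a given pair $y,z\in\Mea(E)$ must coincide. Once this is done, Proposition~\ref{minimax} (every lower bound of $y,z$ lies below a maximal one) forces this common value to be the greatest lower bound; and since $\Mea(E)$ is a down-set of $E$, the lower bounds of $y,z$ within $\Mea(E)$ are exactly the lower bounds of $y,z$ in $E$, so we genuinely obtain the meet $y\wedge z$.

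For the uniqueness step I would exploit that both intervals $[0,y]$ and $[0,z]$ are complete MV-effect algebras by Proposition~\ref{ocmdcduya}. Since $w_1,w_2\le y$, the meet $m:=w_1\wedge w_2$ exists in $[0,y]$; as every lower bound of $w_1,w_2$ is automatically $\le w_1$, this $m$ is a genuine meet in $E$ (in particular it does not depend on the ambient interval and is computed the same way inside $[0,z]$). Put $a:=w_1\ominus m$ and $b:=w_2\ominus m$. Using the MV-algebra identities in $[0,y]$ one gets $m\oplus(a\wedge b)=(m\oplus a)\wedge(m\oplus b)=w_1\wedge w_2=m$, hence $a\wedge b=0$, hence $a\oplus b=a\vee b$, so that the element $v:=w_1\oplus(w_2\ominus m)=m\oplus a\oplus b$ exists in $E$ and equals the join $w_1\vee w_2$ computed in $[0,y]$; in particular $v\le y$. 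Repeating the identical computation inside $[0,z]$ — the elements $m$, $w_2\ominus m$, etc.\ being literally the same, as they are determined by $\oplus$ — yields that $v$ is also the join $w_1\vee w_2$ computed in $[0,z]$, so $v\le z$ as well.

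Thus $v$ is a lower bound of $y$ and $z$, it lies in $\Mea(E)$ because $v\le y\in\Mea(E)$, and $w_1\le v$ and $w_2\le v$. Maximality of $w_1$ gives $v=w_1$ and maximality of $w_2$ gives $v=w_2$, whence $w_1=w_2$, as required. The only delicate point is the simultaneous bound $v\le y$ \emph{and} $v\le z$: the join of $w_1,w_2$ has to be realised once inside $[0,y]$ and once inside $[0,z]$, and one must check it is the \emph{same} element $v$ — which it is, because $v$ is built from $w_1,w_2,m$ using only the partial operations $\oplus$ and $\ominus$ of $E$. Everything else is routine bookkeeping with the MV-effect-algebra structure of $[0,y]$ and $[0,z]$ and with Proposition~\ref{minimax}.
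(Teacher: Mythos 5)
Your proof is correct and follows essentially the same route as the paper: maximal lower bounds exist by Proposition~\ref{minimax}, the complete MV-effect algebras $[0,y]$ and $[0,z]$ from Proposition~\ref{ocmdcduya} give an interval-independent meet $m$ of two maximal lower bounds $w_1,w_2$, and compatibility then yields the element $w_1\oplus(w_2\ominus m)$ as a common lower bound of $y,z$ above both, so maximality forces $w_1=w_2$. The only difference is cosmetic: you carry $m$ along explicitly via $a=w_1\ominus m$, $b=w_2\ominus m$ instead of the paper's ``we may assume $m=0$'' shift.
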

\begin{proof}  Let $x, y \in \Mea(E)$. From 
Proposition \ref{minimax}, every lower bound of $x, y$ is
under a maximal lower bound of $x, y$. Let $u, v$ be 
maximal lower bounds of $x, y $.
By Proposition \ref{ocmdcduya}, $[ 0, x]$ and $[ 0, y]$ are 
complete MV-effect algebras. Denote by 
$z=u\wedge_{[0,x]} v\in [0, y]$. 
Then $z\leq u\wedge_{[0,y]} v$. By a symmetric argument 
we get that $u\wedge_{[0,y]} v\leq z$, i.e. 
$z= u\wedge_{[0,y]} v$. We 
may assume that $z=0$ otherwise we could shift $x, y, u, v, z$ by $z$. Since $u\comp_{[0, x]} v$ and $u\comp_{[0, y]} v$ 
we get that $u\oplus v\leq x$ exists and 
$u\oplus v=u\vee_{[0,x]} v=u\vee_{[0,y]} v\leq x, y$, 
$u\leq u\oplus v$, $v\leq u\oplus v$. Therefore $u=v=0$, 
i.e., any two maximal lower bounds of $x, y$ coincide. 
\end{proof}

In what follows we will extend and modify \cite[Lemma 20, Proposition 21]{jenca} for
an orthocomplete homogeneous  effect algebra $E$.

\begin{proposition}\label{meetmodjen} 
Let $E$ be a  homogeneous meager-orthocomplete sharply dominating 
 effect algebra and let $x, y\in E$ 
are in the same block $B$ of $E$ such that $x\wedge_{B} y=0$. Then 
$\widehat{x}\wedge \widehat{y}=0$. 
\end{proposition}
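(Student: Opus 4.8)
The plan is to reduce the claim about $\widehat{x}\wedge\widehat{y}$ to a statement about meager elements, where we can apply the machinery of the previous section, in particular the fact (Proposition~\ref{ocmdcduya}) that $[0,w]$ is a complete MV-effect algebra for meager $w$, together with Proposition~\ref{corcduya} which controls where $\widetilde{x}$, $\widehat{x}$ and the meager remainder of $x$ sit relative to a block. First I would write each of $x$ and $y$ in the canonical form supplied by Corollary~\ref{gejza}: $x=\widetilde{x}\oplus x_M$ and $y=\widetilde{y}\oplus y_M$ with $x_M=x\ominus\widetilde{x}$, $y_M=y\ominus\widetilde{y}$ meager, and by Proposition~\ref{corcduya} all the relevant auxiliary elements ($\widetilde{x},\widehat{x}$, $x_M$, and likewise for $y$) lie in $B$, so that all the computations below can be carried out inside $B$, which has RDP.

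Next I would analyze $\widehat{x}\wedge\widehat{y}$ by peeling off the sharp parts. Using Statement~\ref{hatrozdilu} we have $\widehat{x}=\widetilde{x}\oplus\widehat{x_M}$ (since $\widehat{x}\ominus\widetilde{x}=\widehat{x\ominus\widetilde{x}}=\widehat{x_M}$), and similarly $\widehat{y}=\widetilde{y}\oplus\widehat{y_M}$. The key first reduction is to show $\widetilde{x}\wedge\widehat{y}=0$ and, dually, $\widehat{x}\wedge\widetilde{y}=0$; this should follow from $x\wedge_B y=0$ by a sharpness/homogeneity argument in the spirit of Lemma~\ref{xshom} — if $w\le\widetilde{x}$ and $w\le\widehat{y}$ then $w$ is dominated by a sharp element below $x$ and by... here one needs that a common lower bound of $\widetilde{x}$ and $\widehat{y}$ forces a common lower bound of $x$ and $y$, using that $\widehat{y}\ominus y = \widehat{y_M}\ominus y_M$ is meager and Corollary~\ref{wsmodyjem}. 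Granting these orthogonality-of-sharp-parts facts, together with the distributivity of $\wedge$ over $\oplus$ for central/sharp elements in a block (Statement~\ref{gejzapulm} and the fact $\Sh(E)\cap B=\C(B)$ from Statement~\ref{gejzasum}(vii)), the problem collapses to showing $\widehat{x_M}\wedge\widehat{y_M}=0$.

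So the heart of the matter is: for meager $x_M,y_M\in B$ with $x_M\wedge_B y_M$... — but wait, we only assumed $x\wedge_B y=0$, so I first need to deduce $x_M\wedge_B y_M=0$ from $x\wedge_B y=0$. This again uses that $\widetilde{x},\widetilde{y}$ are sharp (hence central in $B$) and the decomposition of $\wedge$ in a block: $x\wedge_B y = (\widetilde{x}\oplus x_M)\wedge_B(\widetilde{y}\oplus y_M)$, and expanding via Statement~\ref{gejzapulm} shows the meager-meager cross term $x_M\wedge_B y_M$ is $\le x\wedge_B y=0$. Then, for the core claim $\widehat{x_M}\wedge\widehat{y_M}=0$ with $x_M\wedge_B y_M=0$: let $r\le\widehat{x_M}$ and $r\le\widehat{y_M}$. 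Since $\widehat{x_M}\ominus x_M$ and $\widehat{y_M}\ominus y_M$ are meager (Statement~\ref{jmpy2}(i)) and lie in $B$, I expect to show $r$ is meager — e.g. any sharp $s\le r$ is $\le\widehat{x_M}$ and $\le\widehat{y_M}$, and a sharpness argument inside $B$ plus minimality of $\widehat{x_M}$ forces $s\le x_M$ hence $s\le\widetilde{x_M}=0$. Then $r,\widehat{x_M},\widehat{y_M}$ all lie in a single interval $[0,w]$ with $w$ meager (take $w=\widehat{x_M}\oplus(\text{something})$... actually $\widehat{x_M}$ need not be meager, so instead use that $r$ is meager and work in $[0,\widehat{x_M}]$): using Corollary~\ref{wsmodyjem} with $v=\widehat{x_M}$, $w=\widehat{x_M}\ominus x_M$ one gets from $r\le\widehat{x_M}$ that $r\wedge x_M$ and $r$ relate appropriately, and symmetrically for $y_M$; then $r\le x_M$ and $r\le y_M$ would follow, whence $r\le x_M\wedge_B y_M=0$.

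\medskip

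\noindent\textbf{Main obstacle.} The delicate point is passing from ``$r\le\widehat{x_M}$'' to ``$r\le x_M$'' for a meager lower bound $r$ of $\widehat{x_M}$ and $\widehat{y_M}$: this is exactly where homogeneity (via Corollary~\ref{wsmodyjem}) and the minimality property of $\widehat{\,\cdot\,}$ must be combined carefully, and where one must be sure all elements involved genuinely lie in the single block $B$ so that meets can be computed there — this is guaranteed by Proposition~\ref{corcduya}, but keeping track of which meet ($\wedge_B$ versus $\wedge_E$) is being used at each step will require care. I would expect the $\widetilde{x}$-versus-$\widehat{y}$ orthogonality and the reduction to the purely meager case to be comparatively routine once the block-membership bookkeeping is set up.
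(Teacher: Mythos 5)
There is a genuine gap at exactly the point you flag as the ``main obstacle'', and it is not a bookkeeping issue but the missing heart of the argument. Your core step is to take a common lower bound $r$ of $\widehat{x_M}$ and $\widehat{y_M}$ and deduce $r\le x_M$ (and $r\le y_M$) via Corollary~\ref{wsmodyjem}. But that corollary needs \emph{both} $r\le \widehat{x_M}\ominus x_M$ and $r\le(\widehat{x_M}\ominus x_M)'$, and you have neither; from $r\le\widehat{x_M}$ alone the conclusion $r\le x_M$ is simply false (take $r=\widehat{x_M}\ominus x_M\neq 0$), so the second hypothesis $r\le\widehat{y_M}$ must be used in a structural way, and your sketch never does this. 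The preliminary reduction is also shakier than you suggest: splitting $\widehat{x}=\widetilde{x}\oplus\widehat{x_M}$, $\widehat{y}=\widetilde{y}\oplus\widehat{y_M}$ and killing the four cross-meets does not by itself give $\widehat{x}\wedge\widehat{y}=0$ for the meet in $E$, since the distributivity you invoke (Statement~\ref{gejzapulm}) lives inside the block and concerns meets with central elements; in general you cannot pass from ``all cross-meets vanish'' to ``the meet of the sums vanishes'' without an extra mechanism. Indeed the proposition asserts $\widehat{x}\wedge\widehat{y}=0$ in $E$, not merely in $B$, and your outline has no step that upgrades block-level information to $E$-level information.

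The paper's proof supplies precisely the two missing mechanisms. First, it proves $\widehat{x}\wedge_B y=0$ as follows: by Proposition~\ref{corcduya} applied to the meager element $\widehat{x}\ominus x$ (together with Lemma~\ref{xssuplem}) one writes $\widehat{x}\ominus x=\bigoplus_{j\in J}^{\Mea(E)}x_j$ with every $x_j\le x$ and $x_j\le(\widehat{x}\ominus x)'$; given $w\in B$ with $w\le\widehat{x}$, $w\le y$, RDP in $B$ splits $w=w_1\oplus w_2$ with $w_1\le x$ (so $w_1\le x\wedge_B y=0$) and $w_2\le\widehat{x}\ominus x$, hence $w\le\widehat{x}\ominus x$; then Corollary~\ref{secorssxhcmea} splits $w=\bigoplus_{j\in J}^{\Mea(E)}u_j$ with $u_j\le x_j\le x$ and $u_j\le w\le y$, forcing every $u_j=0$, i.e.\ $w=0$. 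This orthogonal-splitting argument, which is where meager-orthocompleteness actually enters, is what replaces your unworkable appeal to Corollary~\ref{wsmodyjem}. Repeating the same argument once more (now with $\widehat{x}$ in place of $y$ and $\widehat{y}$ in place of $x$, starting from $\widehat{x}\wedge_B y=0$) gives $\widehat{x}\wedge_B\widehat{y}=0$. Second, the passage to the meet in $E$: since $\widehat{x}\comp_B\widehat{y}$ and $\widehat{x}\wedge_B\widehat{y}=0$, the orthosum $\widehat{x}\oplus\widehat{y}$ exists, so $\widehat{x}\le(\widehat{y})'$ and therefore $\widehat{x}\wedge\widehat{y}\le(\widehat{y})'\wedge\widehat{y}=0$ by sharpness of $\widehat{y}$. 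If you want to salvage your decomposition-into-meager-parts plan, you would still need analogues of both of these steps, so in its present form the proposal does not constitute a proof.
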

\begin{proof} Note that $\widehat{x}, \widehat{x}\ominus x, 
\widehat{y}, \widehat{y}\ominus y\in B$.
Let us first check that $\widehat{x}\wedge_B {y}=0$. 
By Proposition \ref{corcduya} applied to the 
element $\widehat{x}\ominus x$ we have from Lemma 
\ref{xssuplem} that 
there is  an orthogonal family $(x_j)_{j\in J}$ such that, for 
all $j\in J$, 
$x_j\leq x, x_j\in B\cap [0, \widehat{x}\ominus x]$ and 
$\bigoplus_{j\in J}^{\Mea(E)}  x_j= %
\widehat{x}\ominus x\in \Mea(E)$. 
Let $w\in B$, 
$w\leq \widehat{x}$ and $w\leq y$. Since $w\leq x\oplus (\widehat{x}\ominus x)$ 
we can find by the Riesz decomposition property of $B$ 
elements $w_1, w_2\in B$ such that $w=w_1\oplus w_2$, $w_1\leq x$ and $w_2\leq \widehat{x}\ominus x$. 
Therefore $w_1\leq x\wedge y=0$ implies $w=w_2\leq \widehat{x}\ominus x$.
Hence by Corollary \ref{secorssxhcmea} we obtain that 
there exists an orthogonal family $(u_j)_{j\in J}$ such that 
$\bigoplus_{j\in J}^{\Mea(E)}  u_j=w$ and $u_j\leq x_j$ for all $j\in J$. This yields that 
$u_j=0$ for all $j\in J$, i.e., $w=0$. It follows that $\widehat{x}\wedge_B {y}=0$. 
Applying the above considerations to $\widehat{x}\wedge_B {y}=0$ we get that 
$\widehat{x}\wedge_B \widehat{y}=0$. 

Now, since $\widehat{x}\comp_{B}\widehat{y}$ and $\widehat{x}\wedge_B \widehat{y}$ 
exists we have that $\widehat{x}\oplus (\widehat{y} \ominus %
(\widehat{x}\wedge_B \widehat{y}))= \widehat{x}\oplus \widehat{y}$ exists. 
It follows that $\widehat{x}\leq (\widehat{y})'$, i.e., 
$\widehat{x}\wedge \widehat{y}%
\leq (\widehat{y})'\wedge \widehat{y}=0$.  
\end{proof}

\begin{theorem}\label{blocksar}
Let $E$ be a  homogeneous 
meager-orthocomplete sharply dominating effect algebra.
Then every block $B$ in $E$ is a lattice.
\end{theorem}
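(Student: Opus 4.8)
The goal is to produce, for each pair $x,y\in B$, a meet $x\wedge_B y$ inside $B$; once all binary meets exist, the fact that $B$ is a sub-effect algebra (hence closed under $z\mapsto z'$, which is an order-reversing involution of $B$) yields all binary joins via $x\vee_B y=(x'\wedge_B y')'$, so $B$ is a lattice. Throughout I would freely use the following features of a block $B$ established in Sections~\ref{orto} and~\ref{podrobnosti}: $B$ has the Riesz decomposition property and $\C(B)=\Sh(E)\cap B$ (Statement~\ref{gejzasum}); for every $z\in B$ the elements $\widetilde z$ and $\widehat z$ exist ($E$ is sharply dominating), lie in $B$, and hence lie in $\C(B)$, with $[\widetilde z,z]\subseteq B$ and $[z,\widehat z]\subseteq B$ (Proposition~\ref{corcduya}); for every $z\in\Mea(E)\cap B$ one has $[0,z]\subseteq B$ (Corollary~\ref{duscduya}); and $\Mea(E)$ is a meet-semilattice (Proposition~\ref{dusminimax}).

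The one observation I would isolate is a ``central splitting'' of the set of common lower bounds. Let $c\in\C(B)$ and $x,y\in B$ with $c\le x$; put $x_0:=x\ominus c$ and decompose $x=c\oplus x_0$ and $y=(y\wedge_B c)\oplus(y\wedge_B c')$ along the central element $c$. Sending a common lower bound $w$ of $x,y$ in $B$ to the pair $(w\wedge_B c,\,w\wedge_B c')$ is an order isomorphism from the poset of common lower bounds of $x,y$ onto the product of $[0,\,y\wedge_B c]$ with the poset of common lower bounds of $x_0$ and $y\wedge_B c'$; the verification is routine manipulation of central decompositions (uniqueness and order-compatibility of the splitting, together with Statement~\ref{gejzapulm}, which reduces $w\le x$ to $w\wedge_B c'\le x_0$ and $w\le y$ to the two coordinate inequalities). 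Since the first factor has a greatest element, $x\wedge_B y$ exists if and only if $x_0\wedge_B(y\wedge_B c')$ exists, and then $x\wedge_B y=(y\wedge_B c)\oplus\bigl(x_0\wedge_B(y\wedge_B c')\bigr)$.

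Now I would apply this twice to strip away sharp parts. First take $c=\widetilde x$: the existence of $x\wedge_B y$ reduces to that of $x_M\wedge_B y_1$, where $x_M:=x\ominus\widetilde x\in\Mea(E)\cap B$ and $y_1:=y\wedge_B\widetilde x'\in B$. Next apply the observation to the pair $(y_1,x_M)$ with $c=\widetilde{y_1}\in\C(B)$ (which satisfies $\widetilde{y_1}\le y_1$): the existence of $x_M\wedge_B y_1$ reduces to that of $(y_1)_M\wedge_B\bigl(x_M\wedge_B\widetilde{y_1}'\bigr)$, where $(y_1)_M:=y_1\ominus\widetilde{y_1}\in\Mea(E)\cap B$. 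Since $(y_1)_M\le\widetilde{y_1}'$, a common lower bound of $(y_1)_M$ and $x_M\wedge_B\widetilde{y_1}'$ is exactly a common lower bound of $(y_1)_M$ and $x_M$, so this last meet is $x_M\wedge_B(y_1)_M$, a meet of two elements of $\Mea(E)\cap B$. By Proposition~\ref{dusminimax} the meet $m:=x_M\wedge_{\Mea(E)}(y_1)_M$ exists in $\Mea(E)$; since $m\le x_M$ we get $m\in[0,x_M]\subseteq B$, and any common lower bound of $x_M$ and $(y_1)_M$ in $B$ is below $x_M$, hence meager, hence $\le m$, so $m=x_M\wedge_B(y_1)_M$. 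Unwinding the two reductions produces $x\wedge_B y$ explicitly, so every pair in $B$ has a meet and $B$ is a lattice.

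I expect the only genuine work to be in the central-splitting observation---establishing that $w\le x$ (respectively $w\le y$) is equivalent to the two coordinate inequalities and that $w\mapsto(w\wedge_B c,\,w\wedge_B c')$ is an order isomorphism onto the stated product---together with the bookkeeping that forces a \emph{second} application: after the first split the coordinate $y_1$ need not be meager, so one pass does not land in the meager case covered by Proposition~\ref{dusminimax}, whereas splitting along $\widetilde{y_1}$ does. The tempting shortcut of reducing in a single step by factoring out $\widetilde x\wedge_B\widetilde y$ fails, because a common lower bound of $x,y$ need not dominate that sharp element; the two-stage argument above is arranged precisely to circumvent this.
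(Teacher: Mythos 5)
Your proposal is correct, and it rests on the same essential ingredients as the paper's proof, but the way you organize the verification is genuinely different. The paper works symmetrically: it decomposes both elements as $y=y_S\oplus y_M$, $z=z_S\oplus z_M$ (Corollary~\ref{duscduya}), writes down the candidate infimum $(y_S\wedge_B z_S)\oplus(y_S\wedge_B z_M)\oplus(z_S\wedge_B y_M)\oplus(y_M\wedge_B z_M)$ in one stroke, and then shows that an arbitrary common lower bound $v$ lies below it by two applications of the Riesz decomposition property of $B$. You instead reduce asymmetrically and iteratively: your ``central splitting'' observation (the order isomorphism of the poset of common lower bounds onto a product, induced by a central element $c\in\C(B)$ via $w\mapsto(w\wedge_B c,\,w\wedge_B c')$) replaces the explicit RDP bookkeeping, and two passes --- first along $\widetilde x$, then along $\widetilde{y_1}$ --- land you in the meager--meager case, which you settle exactly as the paper does, by Proposition~\ref{dusminimax} together with $[0,x_M]\subseteq B$ from Corollary~\ref{duscduya} and downward closedness of $\Mea(E)$. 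Both arguments need $\widetilde x\in B\cap\Sh(E)=\C(B)$ (Proposition~\ref{corcduya}, Statement~\ref{gejzasum}) and the distributivity of central meets over $\oplus$ (Statement~\ref{gejzapulm}); the difference is that where the paper invokes RDP, you invoke the direct-product behaviour of central elements, which in a block is available from the GrFoPu center theorem. What each buys: the paper's route is shorter and yields the symmetric four-term formula immediately; yours is more modular, makes the maximality of the meet structurally transparent (a product with one factor having a top), and correctly diagnoses why a single split along $\widetilde x\wedge_B\widetilde y$ would not suffice --- at the price of the two-stage unwinding and of having to note (as both proofs implicitly do) that the order of $B$ is the restriction of the order of $E$ and that joins then come from $x\vee_B y=(x'\wedge_B y')'$. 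I see no gap; the only part you leave as ``routine'' (the splitting isomorphism itself) is indeed a straightforward consequence of $w=(w\wedge_B c)\oplus(w\wedge_B c')$ and Statement~\ref{gejzapulm}.
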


\begin{proof}
Let $B$ be a block and $y,z \in B$. Then by Corollary \ref{duscduya} 
$y=y_S\oplus y_M$, $z=z_S\oplus z_M$, 
$y_S, z_S\in B\cap \Sh(E)$, $y_M, z_M\in B\cap \Mea(E)$. 
Let us put $c=(y_S\wedge_{B}z_S)\oplus %
({y}_S\wedge_B {z}_M)\oplus ({z}_S\wedge_B {y}_M)%
\oplus (y_M\wedge_B z_M)\in B$. Note that all the summands of 
$c$ exist in virtue of Statement \ref{gejzapulm}, (i) and 
Propositions \ref{corcduya}, \ref{dusminimax}. Then clearly 
$c$ is well defined since by Statement \ref{gejzapulm}, (i)
$(y_S\wedge_{B}z_S)\oplus ({z}_S\wedge_B {y}_M)=%
z_{S}\wedge_B (y_S\oplus y_M)\leq z_S$ and 
 by Statement \ref{gejzapulm}, (ii) 
 $z_M=z_M\wedge_B (y_S\oplus {y_S}')=%
(z_M\wedge_B y_S)\oplus (z_M\wedge_B {y_S}')\geq %
(z_M\wedge_B y_S)\oplus (z_M\wedge_B y_M)$. Therefore 
also $c\leq z$ and by a symmetric argument we get that 
$c\leq y$. Hence $c\leq y, z$. 

Let us show that $c=y\wedge_B z$. Assume now that $v\leq  y, z$, $v\in B$. 

By the Riesz decomposition property of $B$ there are elements 
$v_1, v_2\in B$ such that $v_1\leq {y}_S$, $v_2\leq {y}_M$ and 
$v=v_1\oplus v_2\leq z$. Again by the Riesz decomposition property we can find elements $v_{11}, v_{12}, v_{21}, v_{22}\in B$ such that 
$v_{11}\leq z_{S}$, $v_{12}\leq z_{M}$, 
$v_{21}\leq z_{S}$, $v_{22}\leq z_{M}$ and 
$v_1=v_{11}\oplus v_{12}$, 
$v_2=v_{21}\oplus v_{22}$. Hence 
$v=v_{11}\oplus v_{12} \oplus v_{21}\oplus v_{22}\leq (y_S\wedge_{B}z_S)\oplus %
({y}_S\wedge_B {z}_M)\oplus ({z}_S\wedge_B {y}_M)%
\oplus (y_M\wedge_B z_M)$.

Consequently, $c$ is the infimum of $y,z$. This yields that $B$ is a lattice. 
\end{proof}

\begin{corollary}\label{ohbmv}
Every block in a homogeneous  orthocomplete effect algebra
is an MV-algebra.
\end{corollary}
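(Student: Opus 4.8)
The plan is to read the corollary off Theorem~\ref{blocksar} together with the definitions of a block and of an MV-effect algebra. First I would check that the hypotheses of Theorem~\ref{blocksar} are fulfilled: if $E$ is a homogeneous orthocomplete effect algebra, then by the Observation following Definition~\ref{meagerocm} (equivalently, by Statement~\ref{gejzaoch} together with the obvious fact that orthocompleteness passes to the downset $\Mea(E)$) $E$ is meager-orthocomplete and sharply dominating. Hence Theorem~\ref{blocksar} applies and every block $B$ of $E$ is a lattice.

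Next I would recall, via Statement~\ref{gejzasum}(iv), that a block $B$ of a homogeneous effect algebra is a maximal sub-effect algebra of $E$ with the Riesz decomposition property; in particular $B$, equipped with the inherited partial operation $\oplus$, is an effect algebra satisfying RDP. Combining this with the preceding paragraph, $B$ is a lattice effect algebra in which RDP holds, and this is precisely the definition of an MV-effect algebra (Definition~\ref{rdp}).

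Finally, to phrase the conclusion as ``$B$ is an MV-algebra'' rather than ``$B$ is an MV-effect algebra'', I would invoke the standard term-equivalence between MV-algebras and MV-effect algebras: the operations of an MV-algebra and the partial orthosummation of an MV-effect algebra are mutually definable, so the two notions describe the same objects. There is no genuine difficulty here; the statement is a repackaging of Theorem~\ref{blocksar} with two definitions, and the only point that warrants a remark is this last passage between ``MV-effect algebra'' and ``MV-algebra''.
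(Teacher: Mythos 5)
Your argument is correct and is exactly the route the paper intends: the corollary is stated as an immediate consequence of Theorem~\ref{blocksar}, using the Observation that orthocomplete effect algebras are meager-orthocomplete and sharply dominating, together with Statement~\ref{gejzasum} (blocks satisfy RDP), so a block is a lattice effect algebra with RDP, i.e.\ an MV-effect algebra, and hence an MV-algebra by the standard term-equivalence. No gaps; your filling-in of the MV-effect-algebra/MV-algebra identification is the only detail the paper leaves tacit.
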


\begin{theorem}\label{archimde}
Let $E$ be a  homogeneous 
meager-orthocomplete sharply dominating 
 effect algebra.
Then $E$ is Archimedean.
\end{theorem}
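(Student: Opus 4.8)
The plan is a proof by contradiction: suppose $E$ is not Archimedean, so some $y\ne 0$ has $\ord(y)=\infty$. I will exhibit a single meager element that lies above every multiple $ky$, and then apply Observation~\ref{semiarch} inside the orthocomplete generalized effect algebra $\Mea(E)$ to force $y=0$.

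First I would extract the consequences of $\ord(y)=\infty$. By Lemma~\ref{ordinffin}(1), $\{ky\mid k\in\N\}\subseteq\HMea(E)\subseteq\Mea(E)$; in particular $y\in\Mea(E)$, every $ky$ is meager, and $ky$ exists in $\Mea(E)$ for each $k$. Since $E$ is sharply dominating, $\Sh(E)$ is a sub-effect algebra of $E$ (by \cite{cattaneo}) and $\widehat{y}$ exists. Because $E$ is homogeneous and $y\le\widehat{y}\in\Sh(E)$ while every $ky$ is defined, Corollary~\ref{soucethat} gives $ky\le\widehat{y}$ for all $k$.

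The main step is to locate the bound. Put $m=\widehat{y}\ominus y$; by Statement~\ref{jmpy2}(i) applied to $y$ (which is meager and for which $\widehat{y}$ exists), $m\in\Mea(E)$. Fix $k\ge 1$. Then $y\oplus(ky)=(k+1)y$ exists and $(k+1)y\le\widehat{y}$, so $\widehat{y}\ominus(k+1)y$ is defined, and the standard identity $\widehat{y}\ominus\bigl(y\oplus ky\bigr)=(\widehat{y}\ominus y)\ominus ky$ shows that $(\widehat{y}\ominus y)\ominus ky$ is defined; that is, $ky\le\widehat{y}\ominus y=m$. Hence $ky\le m$ for every positive integer $k$, with $m\in\Mea(E)$ and each $ky\in\Mea(E)$.

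Finally, since $E$ is meager-orthocomplete, $\Mea(E)$ is an orthocomplete generalized effect algebra, so Observation~\ref{semiarch} applies there (with $m$ as the common upper bound of the $ky$) and yields $y=0$, a contradiction; thus $E$ is Archimedean. The one point demanding care is exactly that $m$ and all the $ky$ must be checked to lie in $\Mea(E)$, so that Observation~\ref{semiarch} can legitimately be invoked in that generalized effect algebra and not merely in $E$, whose own generalized reduct need not be orthocomplete; with that bookkeeping in hand the rest is routine effect-algebra arithmetic.
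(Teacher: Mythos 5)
Your proposal is correct and follows essentially the same route as the paper: both bound the multiples $ky$ by $\wwidehat{y}$ via Corollary~\ref{soucethat}, shift to get them under $\wwidehat{y}\ominus y\in\Mea(E)$, and then use orthocompleteness of $\Mea(E)$ to force $y=0$. The only cosmetic difference is that the paper first reduces to $\Mea(E)$ via Proposition~\ref{archim} and then redoes the supremum-plus-(IDL) argument by hand, whereas you argue directly in $E$ (using Lemma~\ref{ordinffin} to land in $\Mea(E)$) and delegate that final step to Observation~\ref{semiarch}, which is the same computation.
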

\begin{proof}
In virtue of Proposition~\ref{archim}, it is sufficient to check that
\(\Mea(E)\) is Archimedean.
Suppose \(\operatorname{ord}(y)=\infty\).
By Corollary~\ref{soucethat}, \(ky\leq \wwidehat{y}\) for all \(k\in \mathbb
N\), and therefore \((k-1)y=ky\ominus y\leq \wwidehat{y}\ominus y\in
\Mea(E)\) for all \(k\in \mathbb N \subseteq\{0\}\).
Hence there exists \(\bigvee \{ky\mid y\in \mathbb N\}\) in \(\Mea(E)\).
By (IDL), y=0.
\end{proof}

Recall that a {\em Heyting} algebra (see \cite{foulishea}) is a system 
$(L, \leq , 0, 1, \wedge , \vee , \Rightarrow)$ consisting
of a bounded lattice $(L, \leq , 0, 1, \wedge , \vee)$ and 
a binary operation $\Rightarrow : L \times L \to L$,
called the {\em Heyting implication connective}, such that 
$x \wedge y \leq z$ iff $x \leq (y \Rightarrow  z)$
for all $x, y, z \in L$.  The {\em Heyting negation mapping} 
$^{*}: L \to L$ is defined by $x^{*}= (x \Rightarrow  0)$
for all $x \in L$. The set $L^{*}= \{ x^{*}\mid x \in L \}$ 
is called the {\em Heyting center of $L$}. 

 A {\em pseudocomplementation} on a bounded lattice 
$(L, \leq , 0, 1, \wedge , \vee)$ is a mapping 
$^{*}: L \to L$ such that, for all $x, y \in L$, 
$x \wedge y = 0$ iff  $x \leq y^{*}$. Then, for a {Heyting} algebra $L$, 
the Heyting negation is a pseudocomplementation on L. 

\begin{definition}\label{heafoul}{\rm \cite{foulishea}
A {\em Heyting effect algebra}  is a lattice effect
algebra $E$ that, as a bounded lattice, is also a Heyting algebra such that the
Heyting center $E^{*}$ coincides with the  center $\C(E)$ of the effect algebra $E$.}
\end{definition}

{\renewcommand{\labelenumi}{{\normalfont  (\roman{enumi})}}
\begin{statement} {\rm \cite[Theorem 5.2]{foulishea}} 
Let $E$ be a lattice effect algebra. Then the following
conditions are  equivalent:
\begin{enumerate}
\settowidth{\leftmargin}{(iiiii)}
\settowidth{\labelwidth}{(iiiii)}
\settowidth{\itemindent}{(iii)}
\item $E$ is a Heyting effect algebra.
\item $E$ is an MV-effect algebra with a pseudocomplementation $^{*}: E \to E$.
\end{enumerate}
\end{statement}}

\begin{theorem}\label{ycoveredhea}
Let $E$ be a  homogeneous 
meager-orthocomplete sharply dominating 
 effect algebra. 
Then  every block in $E$  is an Archimedean  Heyting effect algebra.
\end{theorem}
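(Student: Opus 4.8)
The plan is to combine the results already assembled in this section. First, by Theorem \ref{blocksar}, every block $B$ of $E$ is a lattice, hence a lattice effect algebra; being a block, it satisfies the Riesz decomposition property, so $B$ is in fact an MV-effect algebra. Next, by Theorem \ref{archimde} the whole algebra $E$ is Archimedean, and since $B$ is a sub-effect algebra of $E$, the order and the $\oplus$-operation of $B$ are inherited from $E$, so $B$ is Archimedean as well. It remains to exhibit a pseudocomplementation on $B$ and then invoke the cited Statement (\cite[Theorem 5.2]{foulishea}), which says that an MV-effect algebra carrying a pseudocomplementation is precisely a Heyting effect algebra.

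So the heart of the argument is to define $x^{*}$ on $B$ and to verify the pseudocomplement law $x\wedge_B y=0 \iff x\le y^{*}$. The natural candidate is $x^{*}=\widehat{x}'$ (equivalently $\widehat{x}' = \widetilde{x'}$, which exists by Lemma \ref{suplem} since $E$ is sharply dominating, so $\widehat{x}$ exists for every $x$). I would first check that $x^{*}\in B$: this follows from Proposition \ref{corcduya}, which gives $[x,\widehat{x}]\subseteq B$ whenever $x\in B$, hence $\widehat{x}\in B$ and, $B$ being a sub-effect algebra, $\widehat{x}'\in B$. Then I would prove the two implications. For the forward direction, suppose $x\wedge_B y=0$; by Proposition \ref{meetmodjen}, $\widehat{x}\wedge\widehat{y}=0$ in $E$, and in particular $\widehat{x}\wedge\widehat{y}=0$ inside $B$; since $x\le\widehat{x}$ and $\widehat{y}\le\widehat{y}$ we get $x\wedge_B\widehat{y}=0$, and because $\widehat{y}$ is sharp and $B$ is an MV-effect algebra this yields $x\le\widehat{y}'=y^{*}$. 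For the converse, if $x\le y^{*}=\widehat{y}'$ then $x\wedge_B y\le \widehat{y}'\wedge_B\widehat{y}=0$ since $\widehat{y}\in\Sh(E)$.

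The main obstacle I anticipate is the bookkeeping in the forward implication: one must be careful that $\widehat{x},\widehat{y}$ computed in $E$ really are the relevant sharp elements inside the block $B$, i.e. that $\widehat{x}$ coincides with the smallest sharp element of $B$ above $x$. This is exactly what Proposition \ref{corcduya} (together with Statement \ref{gejzasum}(vii), $\C(B)=\Sh(E)\cap B$) is for, so the step should go through, but it is the point requiring genuine care rather than a one-line inheritance argument. Once the pseudocomplementation is in place, the conclusion is immediate from the Foulis characterization, so the theorem follows.
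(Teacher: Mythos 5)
Your proposal is correct and follows essentially the same route as the paper: combine Theorems \ref{blocksar} and \ref{archimde} to get that each block $B$ is an Archimedean MV-effect algebra, define the pseudocomplementation $x^{*}=\wwidehat{x}{}'$ on $B$, verify the law $x\wedge_B y=0 \iff x\le y^{*}$ via Proposition \ref{meetmodjen} (with sharpness of $\wwidehat{y}$ giving the easy direction and the passage from $\wwidehat{x}\le y^{*}$ back to $x\le y^{*}$), and conclude by the Foulis characterization of Heyting effect algebras. Your extra care that $\wwidehat{x}\in B$ (via Proposition \ref{corcduya} and $\C(B)=\Sh(E)\cap B$) is exactly the justification implicit in the paper's claim $x^{*}\in\C(B)$, so no gap remains.
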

\begin{proof} Let $B$ be a block of $E$. Then by Theorems \ref{blocksar} and 
\ref{archimde} we have that $B$ is an Archimedean  MV-effect algebra. 
Let us define a pseudocomplementation $^{*}: B \to B$ on $B$. For any 
$x\in B$ we put $x^{*}=\widehat{x}'\in \C(B)$. Assume that $x, y\in B$. Then by 
Proposition \ref{meetmodjen}  
$x\wedge_B y=0$ iff $\widehat{x}\wedge_B \widehat{y}=0$ iff 
$\widehat{x}\leq \widehat{y}'$ iff $\widehat{x}\leq {y}^{*}\in \Sh(E)$ iff 
${x}\leq {y}^{*}$. Therefore $B$ is an Archimedean   Heyting  effect algebra.
\end{proof}

\begin{corollary}\label{coveredhea}
Let $E$ be a  homogeneous 
meager-orthocomplete sharply dominating 
 effect algebra. 
Then  $E$  can be covered  by Archimedean  Heyting effect algebras.
\end{corollary}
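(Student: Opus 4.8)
The plan is to deduce the statement immediately from Theorem~\ref{ycoveredhea} together with Statement~\ref{gejzasum}(v). First I would recall that Statement~\ref{gejzasum}(v) tells us that every homogeneous effect algebra is the union of its blocks, i.e. $E=\bigcup\{B\mid B\text{ is a block of }E\}$; in particular every element of $E$ lies in some block. Next I would invoke Theorem~\ref{ycoveredhea}, which under the present hypotheses (homogeneous, meager-orthocomplete, sharply dominating) guarantees that each such block $B$ is an Archimedean Heyting effect algebra. Combining these two facts, $E$ is the union of a family of subalgebras each of which is an Archimedean Heyting effect algebra, which is exactly the assertion that $E$ can be covered by Archimedean Heyting effect algebras.

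Concretely, the proof is one short paragraph: let $\{B_i\mid i\in I\}$ be the family of all blocks of $E$. By Statement~\ref{gejzasum}(v) we have $E=\bigcup_{i\in I}B_i$, and by Theorem~\ref{ycoveredhea} each $B_i$ is an Archimedean Heyting effect algebra. Hence the family $\{B_i\mid i\in I\}$ is the desired cover.

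There is essentially no obstacle here: the corollary is a packaging of the two previously established results, so the only thing to be careful about is to make sure the notion of ``cover'' used in the statement is exactly ``$E$ is the set-theoretic union of a collection of sub-effect algebras, each Archimedean Heyting,'' which is precisely what the union-of-blocks decomposition provides. If the paper intends ``cover'' in the slightly stronger sense that the blocks are sub-\emph{lattice} effect algebras and form a compatible cover, one would additionally note that blocks are by definition maximal sub-effect algebras with RDP (Statement~\ref{gejzasum}(iv)) and, being lattices by Theorem~\ref{blocksar}, are sub-lattice effect algebras; but no extra work beyond citing these results is needed. So the expected ``hard part'' is merely bookkeeping of definitions, not any genuine mathematical difficulty.
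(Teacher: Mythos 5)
Your proposal is correct and matches the paper's own argument exactly: the paper proves the corollary by noting that every homogeneous effect algebra is covered by its blocks (Statement~\ref{gejzasum}(v)) and applying Theorem~\ref{ycoveredhea} to each block. Nothing further is needed.
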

\begin{proof} Every homogeneous effect algebra is covered by its blocks.
\end{proof}

{\renewcommand{\labelenumi}{{\normalfont  (\roman{enumi})}}
 \begin{proposition}\label{modjen} Let $E$ be a homogeneous 
meager-orthocomplete sharply dominating   effect algebra and let $x, y\in\Mea(E)$ 
and $v\in E$ such that $x, y$ and $v$ are in the same block $B$ of $E$. Then 
\begin{enumerate}
\settowidth{\leftmargin}{(iiiii)}
\settowidth{\labelwidth}{(iii)}
\settowidth{\itemindent}{(ii)}
\item We have that
$v\wedge y$ exists and $v\wedge_{B} y=v\wedge y$.
\item If $\wwidehat{x} = \wwidehat{y}$ then $\wwidehat{x}\ominus (x\wedge y)\in \Mea(E)$.
\item If $\wwidehat{x} = \wwidehat{y}$ then $x\vee_{\Mea(E)} y$ exists and 
$x\vee_{\Mea(E)} y=x\vee_{B} y=x\vee_{[0, \wwidehat{x}]}y$.
\item If $\wwidehat{x} = \wwidehat{y}$ then $\wwidehat{x\wedge y}=\wwidehat{x}$.
\item If $x\leq v$ and $v=v_S\oplus v_M$ such that $v_S\in \Sh(E)$ and 
$v_M\in \Mea(E)$ then $x=(x\wedge v_S)\oplus (x\wedge v_M)$.
\end{enumerate}
\end{proposition}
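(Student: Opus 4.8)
The plan is to prove (i) and (v) first (these are routine), then establish (iv) (the substantive item), deduce (ii) from it immediately, and finally prove (iii). For (i): the block $B$ is a lattice by Theorem~\ref{blocksar}, so $v\wedge_{B}y$ exists; it lies below $y\in\Mea(E)$, hence in $\Mea(E)$, and by Corollary~\ref{duscduya} the interval $[0,y]$ is contained in $B$. Consequently every lower bound $t\in E$ of $v$ and $y$ satisfies $t\le y\in\Mea(E)$, so $t\in[0,y]\subseteq B$ and $t\le v\wedge_{B}y$; thus $v\wedge_{B}y$ is the infimum of $v,y$ in $E$. The same reasoning gives, for any $w\in B$ and any $t\in\Mea(E)\cap B$, that $w\wedge_{B}t=w\wedge t$, and I use this silently below. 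For (v): since $E$ is sharply dominating, $v=v_{S}\oplus v_{M}$ is the unique sharp/meager decomposition of $v$ (Corollary~\ref{gejza}), so $v_{S}=\widetilde{v}\in B$ by Proposition~\ref{corcduya} and $v_{S}\in\C(B)$ by Statement~\ref{gejzasum}(vii); decomposing $x$ along the central element $v_{S}$ of $B$ and using $x\le v$ together with $v_{S}'\wedge_{B}v=v_{M}$ yields $x=(x\wedge_{B}v_{S})\oplus(x\wedge_{B}v_{M})$, and (i) turns the block-meets into meets in $E$.

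The crux is (iv), that $\widehat{x\wedge y}=\widehat{x}$ whenever $\widehat{x}=\widehat{y}$. Put $c=\widehat{x}=\widehat{y}$ and $s=\widehat{x\wedge y}$; these exist since $E$ is sharply dominating, and $x\wedge y=x\wedge_{B}y\in\Mea(E)\cap B$ by (i). By Proposition~\ref{corcduya} both $c$ and $s$ lie in $B$, hence in $\Sh(E)\cap B=\C(B)$ by Statement~\ref{gejzasum}(vii), and $s\le c$ because $x\wedge y\le x\le c$ with $c$ sharp. Centrality of $s$ in $B$ together with $s\le c$ gives $c\wedge_{B}s'=c\ominus s$, so $x\wedge_{B}s'=x\wedge_{B}(c\ominus s)$ and therefore $x=(x\wedge_{B}s)\oplus(x\wedge_{B}(c\ominus s))$, and similarly for $y$. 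Now $\widehat{x\wedge_{B}s}\le s$ and $\widehat{x\wedge_{B}(c\ominus s)}\le c\ominus s$; their orthosum is defined (it is $\le c$) and sharp, because $\Sh(E)$ is a sub-effect algebra (Statement~\ref{gejzasum}(vi)), and it dominates $x$, whence $c=\widehat{x}\le\widehat{x\wedge_{B}s}\oplus\widehat{x\wedge_{B}(c\ominus s)}\le c$, and cancellation forces $\widehat{x\wedge_{B}(c\ominus s)}=c\ominus s$; likewise $\widehat{y\wedge_{B}(c\ominus s)}=c\ominus s$. Setting $x_{2}=x\wedge_{B}(c\ominus s)$ and $y_{2}=y\wedge_{B}(c\ominus s)$, which lie in $\Mea(E)\cap B$, we get $x_{2}\wedge_{B}y_{2}=(x\wedge_{B}y)\wedge_{B}(c\ominus s)\le s\wedge_{B}(c\ominus s)=0$, so Proposition~\ref{meetmodjen} yields $0=\widehat{x_{2}}\wedge\widehat{y_{2}}=(c\ominus s)\wedge(c\ominus s)=c\ominus s$, that is, $s=c$. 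This proves (iv); and (ii) follows at once, since then $\widehat{x}\ominus(x\wedge y)=\widehat{x\wedge y}\ominus(x\wedge y)\in\Mea(E)$ by Statement~\ref{jmpy2}(i).

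For (iii), set $z=x\vee_{B}y$, which exists by Theorem~\ref{blocksar} and satisfies $z\le c$. Working in the block $B$ (an MV-effect algebra by Theorem~\ref{blocksar} and the block property) and using De Morgan together with (i) one gets $c\ominus z=(c\ominus x)\wedge(c\ominus y)$. Here $c\ominus x=\widehat{x}\ominus x\in\Mea(E)$ by Statement~\ref{jmpy2}(i), with $\widehat{c\ominus x}=\widehat{x}\ominus\widetilde{x}=c$ by Statement~\ref{hatrozdilu} (since $\widetilde{x}=0$), and likewise for $c\ominus y$; so the already-proved (iv), applied to the pair $c\ominus x,\ c\ominus y$, gives $\widehat{(c\ominus x)\wedge(c\ominus y)}=c$, and Statement~\ref{jmpy2}(i) then shows $z=c\ominus((c\ominus x)\wedge(c\ominus y))\in\Mea(E)$. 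To see $z$ is the supremum in $\Mea(E)$, take $w\in\Mea(E)$ with $x,y\le w$; by Proposition~\ref{dusminimax} the meet $p=w\wedge_{\Mea(E)}z$ exists, is the meet in $E$ (as in (i)), and is an upper bound of $x,y$ lying in $[0,c]$; since $[0,c]$ is a sublattice of $B$ containing $x,y$, we get $z=x\vee_{B}y\le p\le w$. Hence $x\vee_{\Mea(E)}y=z=x\vee_{B}y=x\vee_{[0,\widehat{x}]}y$.

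The one genuinely delicate point is the equality in (iv). The inequality $\widehat{x\wedge y}\le\widehat{x}$ is trivial; the content is the reverse inequality, obtained not by direct manipulation but by splitting $x$ and $y$ along the central element $s=\widehat{x\wedge y}$ of $B$ so that the question localises to the orthogonal complement $c\ominus s$, where the two localised pieces meet in $0$ and Proposition~\ref{meetmodjen} collapses $c\ominus s$ to $0$. I expect the step that needs the most care is the additivity of $\widehat{\cdot}$ over that central splitting, which rests on $\Sh(E)$ being a sub-effect algebra (to keep orthosums of sharp elements sharp) and on cancellation (to pin the smallest sharp majorant of the $c\ominus s$-component of $x$ down to exactly $c\ominus s$). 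Once (iv) is established the remaining items are formal bookkeeping inside the blocks, which are MV-effect algebras, and in the meet-semilattice $\Mea(E)$.
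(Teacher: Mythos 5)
Your argument is correct, but it runs in the opposite logical direction from the paper's. The paper proves (ii) first and directly: it writes $z=\wwidehat{x}\ominus(x\wedge y)=z_S\oplus z_M$ with $z_S\in\Sh(E)\cap B=\C(B)$, uses centrality of $z_S$ in $B$ to deduce $z_S\le\wwidehat{x}\ominus x\in\Mea(E)$, hence $z_S=0$; then (iii) is obtained from (ii) applied to the pair $\wwidehat{x}\ominus x,\ \wwidehat{x}\ominus y$, and (iv) is a one-line corollary of (ii) (the sharp element $\wwidehat{x}\ominus\wwidehat{x\wedge y}$ lies below the meager element $\wwidehat{x}\ominus(x\wedge y)$, hence is $0$). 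You instead prove (iv) first, by splitting $x$ and $y$ along the central element $s=\wwidehat{x\wedge y}$ of $B$, pinning the hats of the $(c\ominus s)$-components down to exactly $c\ominus s$ by cancellation, and then invoking Proposition~\ref{meetmodjen} to collapse $c\ominus s$ to $0$; (ii) then follows at once from Statement~\ref{jmpy2}(i). Both routes are sound and use only material preceding the proposition; the paper's is a little more economical (no appeal to Proposition~\ref{meetmodjen}), while yours isolates (iv) as the primary fact and makes (ii) a corollary. Your (v) replaces the paper's use of the Riesz decomposition property of $B$ by the central computation $v\wedge_B v_S'=v_M$; the two are interchangeable. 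Items (i) and the overall bookkeeping agree with the paper.

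One justification in your (iii) is off as written: the interval $[0,c]$ of $E$ need not be contained in $B$ (take $c=1$), so ``since $[0,c]$ is a sublattice of $B$'' does not by itself give $z=x\vee_B y\le p$. The repair is immediate with facts you already cite: $p=w\wedge z\le z\in\Mea(E)\cap B$, so $p\in[0,z]\subseteq B$ by Corollary~\ref{duscduya}, hence $p$ is an upper bound of $x,y$ inside $B$ and $z=x\vee_B y\le p\le w$. Alternatively, your De Morgan identity $c\ominus z=(c\ominus x)\wedge(c\ominus y)$ already shows that $z$ is the join of $x,y$ in the interval $[0,c]$ of $E$ (any upper bound $u\le c$ satisfies $c\ominus u\le c\ominus z$, so $z\le u$); this both closes the gap, since $p\le c$, and yields the stated equality $z=x\vee_{[0,\wwidehat{x}]}y$ explicitly.
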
}
\begin{proof} According to Proposition \ref{corcduya}, $\wwidehat{x},\wwidehat{y}\in B$.
By Proposition \ref{dusminimax}, $x\wedge y=x\wedge_{\Mea(E)} y$ exists and belongs to
$\Mea(E)$. In virtue of Corollary \ref{duscduya}, it belongs to $B$.

\noindent
(i): Let $u\leq v$ and $u\leq y$. Since $u\in [0, y]\subseteq B$ we have that 
$u\leq v\wedge_{B} y$.

\noindent{}(ii): Since $x, y \in \Mea(E)$ we have 
that $x\wedge y$ exists, $x\leq \wwidehat{x}$, $y\leq \wwidehat{x}$ and hence also 
$\wwidehat{x}\ominus (x\wedge y)\leq \wwidehat{x}$ exists. Hence  
$(\wwidehat{x}\ominus (x\wedge y))\oplus (x\wedge y)=\wwidehat{x}$.
$B$ contains
$0, x, y, x\wedge y, \wwidehat{x}\ominus (x\wedge y), \wwidehat{x}$. 
Let us put $z=\wwidehat{x}\ominus (x\wedge y)$. Then 
$z=z_S\oplus z_M$, $z_S\in \Sh(E)\cap B=\C(B), z_S\leq \wwidehat{x}$ 
and $z_M\in \Mea(E)$. 
Since $z_S$ is central in $B$ we have that 
$(\wwidehat{x}\ominus (x\wedge y))\wedge_B z_S)\oplus ((x\wedge y)\wedge_B z_S)=z_S$. 
From (i) we get that 
$z_S\oplus (x\wedge (y\wedge z_S))=z_S$ and 
$z_S\oplus (y\wedge (x\wedge z_S))=z_S$.  Hence 
$0=x\wedge (y\wedge z_S)=y\wedge (x\wedge z_S)$.  It follows from (i) that 
$0=\wwidehat{x}\wedge (y\wedge z_S)=y\wedge z_S%
=\wwidehat{y}\wedge (x\wedge z_S)=x\wedge z_S$. 
We have that 
$z_S=z_S\wedge \wwidehat{x}= %
z_S\wedge_B (x\oplus (\wwidehat{x}\ominus x))=%
(z_S\wedge_B x)\oplus (z_S\wedge_B (\wwidehat{x}\ominus x))=%
z_S\wedge_B (\wwidehat{x}\ominus x)\leq \wwidehat{x}\ominus x$. 
Since $\wwidehat{x}\ominus x\in \Mea(E)$ we obtain that $z_S=0$ and 
$z_M=\wwidehat{x}\ominus (x\wedge y)\in \Mea(E)$.

\noindent{}(iii): Since $\wwidehat{x}\ominus x, \wwidehat{x}\ominus y \in \Mea(E)$ we have 
by (ii) that  $\wwidehat{x}\ominus \left((\wwidehat{x}\ominus x)\wedge (\wwidehat{x}\ominus y)\right)=%
x\vee_{[0, \wwidehat{x}]}y\in \Mea(E)$ exists. Let $z\geq x, y$, $z\in \Mea(E)$. Then 
$u=z\wedge (x\vee_{[0, \wwidehat{x}]}y)\in \Mea(E)$ exists, $u\geq x, y$, 
$u\leq x\vee_{[0, \wwidehat{x}]}y\leq \wwidehat{x}$. Hence $z\geq u\geq x\vee_{[0, \wwidehat{x}]}y$.

\noindent{}(iv): Clearly, 
${x\wedge y}\leq \wwidehat{x\wedge y}\leq \wwidehat{x}$. This yields that 
$\wwidehat{x}\ominus \wwidehat{x\wedge y}\leq \wwidehat{x}\ominus {x\wedge y}\in \Mea(E)$, 
$\wwidehat{x}\ominus \wwidehat{x\wedge y}\in \Sh(E)$. 
It follows that $\wwidehat{x}\ominus \wwidehat{x\wedge y}=0$, i.e., 
$\wwidehat{x\wedge y}=\wwidehat{x}$. 

\noindent{}(v):
Then $v_S\in \C(B)$ and $v_M\in B$. Hence 
$x=(x\wedge_B v_S)\oplus (x\wedge_B (v_S)')$, 
$x\wedge_B v_S, x\wedge_B (v_S)'\in B$. Moreover by (i) we have that 
$x\wedge_B v_S=x\wedge v_S$ and $x\wedge_B (v_S)'=x\wedge (v_S)'$.
Evidently, 
$x\wedge (v_S)'\leq v=v_S\oplus v_M$. Since $B$ has the Riesz 
decomposition property we have that 
$x\wedge (v_S)'=u_1\oplus u_2$, $u_1\leq v_S$ and $u_2\leq v_M$. 
But $u_1\leq x\wedge (v_S)'$ yields that $u_1=0$. Hence 
$x\wedge (v_S)'\leq v_M$. This yields that 
$x\wedge (v_S)'=x\wedge (v_S)'\wedge v_M=x\wedge  v_M$. It follows that 
$x=(x\wedge v_S)\oplus (x\wedge v_M)$.
\end{proof}

The following theorem reminds us \cite[Theorem 37]{chovanec} which 
was formulated for D-lattices.

{\renewcommand{\labelenumi}{{\normalfont  (\roman{enumi})}}
\begin{theorem} \label{tmodjensup}\label{modchov} Let $E$ be a 
homogeneous meager-orthocomplete sharply dominating  
 effect algebra and let $x, y\in\Mea(E)$. 
Then the following conditions are equivalent:
\begin{enumerate}
\settowidth{\leftmargin}{(iiiii)}
\settowidth{\labelwidth}{(iii)}
\settowidth{\itemindent}{(ii)}
\item  $x\comp y$. 
\item  $x\comp_{\Mea(E)} y$. 
\item  $x\vee_{\Mea(E)} y$ exists and 
$(x\vee_{\Mea(E)}  y)\ominus y=x \ominus (x\wedge y)$.
\end{enumerate}
\end{theorem}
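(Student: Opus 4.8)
The plan is to prove the cycle $(\mathrm{ii})\Rightarrow(\mathrm{i})\Rightarrow(\mathrm{iii})\Rightarrow(\mathrm{ii})$, with all the real work in $(\mathrm{i})\Rightarrow(\mathrm{iii})$. For $(\mathrm{ii})\Rightarrow(\mathrm{i})$ there is nothing to do: a finite orthogonal family in $\Mea(E)$ that witnesses $x\comp_{\Mea(E)}y$ is, a fortiori, an orthogonal family in $E$ realising the same partial sums $x$ and $y$. For $(\mathrm{iii})\Rightarrow(\mathrm{ii})$, put $m:=x\wedge y$, which exists and lies in $\Mea(E)$ by Proposition~\ref{dusminimax}, and $a:=x\ominus m$, $b:=y\ominus m\in\Mea(E)$, since $\Mea(E)$ is a down-set closed under $\ominus$. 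By $(\mathrm{iii})$, $j:=x\vee_{\Mea(E)}y$ exists in $\Mea(E)$ and $j\ominus y=a$, so $j=y\oplus a=m\oplus a\oplus b$. Thus $(m,a,b)$ is an orthogonal triple whose total orthosum $j$ belongs to $\Mea(E)$, and because $\Mea(E)$ is a down-set every partial sum of this triple ($m$, $a$, $b$, $x=m\oplus a$, $y=m\oplus b$, $a\oplus b$, $j$) lies in $\Mea(E)$; hence $(m,a,b)$ is orthogonal in $\Mea(E)$, and reading off $x=m\oplus a$, $y=m\oplus b$ gives $x\comp_{\Mea(E)}y$.

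For $(\mathrm{i})\Rightarrow(\mathrm{iii})$, assume $x\comp y$. Then $\{x,y\}$ is contained in a block $B$ (Statement~\ref{gejzasum}(v)), which by Theorem~\ref{ycoveredhea} is an Archimedean MV-effect algebra, so in particular a distributive lattice with the Riesz decomposition property. Put $m:=x\wedge_B y$; by Proposition~\ref{modjen}(i) (together with Proposition~\ref{dusminimax}) $m=x\wedge y\in\Mea(E)$, and set $a:=x\ominus m$, $b:=y\ominus m\in\Mea(E)$ and $j:=x\vee_B y$. In the MV-effect algebra $B$ one has $j=x\oplus b=m\oplus a\oplus b$ and the identity $j\ominus y=a=x\ominus(x\wedge y)$. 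The crucial step is the claim $j\in\Mea(E)$.

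To prove $j\in\Mea(E)$: since $E$ is sharply dominating, $x\in\Mea(E)$ is equivalent to $\widetilde x=0$, and by Lemma~\ref{suplem}(ii) (which gives $\widehat{x'}\ominus x'=x\ominus\widetilde x$) this is equivalent to $\widehat{x'}=1$; likewise $\widehat{y'}=1$. Because $(x\vee_B y)'=x'\wedge_B y'$ (De Morgan in the lattice effect algebra $B$), it suffices to show $\widehat{x'\wedge_B y'}=1$. Let $s:=\widehat{x'\wedge_B y'}$; by Proposition~\ref{corcduya} applied to $x'\wedge_B y'\in B$ we get $s\in B\cap\Sh(E)=\C(B)$, so $s$ is central in $B$, and with $s':=1\ominus s$ we have $x'\wedge_B y'\wedge_B s'\le s\wedge_B s'=0$, i.e.\ $(x'\wedge_B s')\wedge_B(y'\wedge_B s')=0$. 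Using the elementary fact that $\widehat{\,\cdot\,}$ commutes with a central component of $B$, namely $\widehat{u\wedge_B c}=\widehat u\wedge_B c$ for $c\in\C(B)$ (proved by majorising each central part of $u$ by its own smallest sharp majorant, both of which then lie below the corresponding central element), together with $\widehat{x'}=\widehat{y'}=1$, we get $\widehat{x'\wedge_B s'}=s'=\widehat{y'\wedge_B s'}$. Proposition~\ref{meetmodjen} applied to the disjoint pair $x'\wedge_B s',\,y'\wedge_B s'\in B$ now yields $s'\wedge s'=0$, so $s'=0$ and $s=1$; hence $\widehat{(x\vee_B y)'}=1$, i.e.\ $j\in\Mea(E)$.

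It remains to identify $j$ as $x\vee_{\Mea(E)}y$. By the Claim, $j\in\Mea(E)$ is an upper bound of $x,y$ there; and if $u\in\Mea(E)$ is any upper bound of $x,y$, then $u$ lies in a block $B'$ together with $x,y$, since $u\in\Mea(E)$ forces $[0,u]\subseteq B'$ by Corollary~\ref{duscduya}; in $B'$ one has $x\wedge_{B'}y=m$ (Proposition~\ref{modjen}(i)) and $x\vee_{B'}y=x\oplus b=j\le u$. Hence $j=x\vee_{\Mea(E)}y$, and $j\ominus y=x\ominus(x\wedge y)$ is exactly the asserted identity. The only genuinely non-formal ingredient is the Claim $j\in\Mea(E)$, and its whole weight sits on Proposition~\ref{meetmodjen}, in whose proof meager-orthocompleteness is used; this is the step I expect to need the most care, and it is essential — for a general MV-effect algebra (e.g.\ the unit interval of $C([0,1])$) the join of two meager elements need not be meager, so the argument must genuinely use that blocks of $E$ are meager-orthocomplete.
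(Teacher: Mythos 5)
Your proof is correct, and while its skeleton is the familiar one, the key step is carried out by a genuinely different argument than the paper's. The paper proves (i)$\Rightarrow$(ii)$\Rightarrow$(iii)$\Rightarrow$(i), reading (iii)$\Rightarrow$(i) off the $c,d$-characterization of $\comp$ and getting (ii)$\Rightarrow$(iii) from the fact that $(x\ominus(x\wedge y))\oplus(x\wedge y)\oplus(y\ominus(x\wedge y))$ is the minimal upper bound of $x,y$; your (ii)$\Rightarrow$(i) and (iii)$\Rightarrow$(ii) are the same kind of bookkeeping. In both treatments everything hinges on showing that the join $j=x\vee_B y$ of $x,y$ inside a common block is meager, and here you diverge: the paper writes $j=z_S\oplus z_M$, cuts down by the central element $z_S$ to $u=z_S\wedge x$, $v=z_S\wedge y$ with $\wwidehat{u}=\wwidehat{v}=z_S$ (Statement \ref{jmpy2}(ii)), and concludes from Proposition \ref{modjen}(iii) that $z_S$ is simultaneously sharp and meager, hence $0$. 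You instead dualize, reducing $j\in\Mea(E)$ to $\wwidehat{x'\wedge_B y'}=1$, and force $s=\wwidehat{x'\wedge_B y'}$ to be $1$ by combining Proposition \ref{meetmodjen} (disjoint elements of a block have disjoint hats) with the auxiliary identity $\wwidehat{u\wedge_B c}=\wwidehat{u}\wedge_B c$ for $c\in \C(B)$ --- a fact not stated in the paper, but your sketch of it is sound: both hats lie in $\C(B)$ by Proposition \ref{corcduya} and Statement \ref{gejzasum}(vii), their orthosum is a sharp upper bound of $u$, and Statement \ref{gejzapulm} finishes. So the two proofs lean on disjoint halves of the paper's toolbox (Proposition \ref{modjen}(ii)--(iv) versus Proposition \ref{meetmodjen}); yours isolates a reusable distributivity fact for $\wwidehat{\phantom{x}}$ over central meets, while the paper's avoids the detour through complements. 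Your closing observation that meager-orthocompleteness is indispensable (in the MV-effect algebra of $[0,1]$-valued continuous functions on $[0,1]$ the join of two meager elements can be $1$) is accurate.
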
}
\begin{proof} (i)$\implik$(ii): Since $x\comp y$ there are $p, q, r\in E$ such that 
$x=p\oplus q$, $y=q\oplus r$ and $p\oplus q\oplus r$ exists. Clearly, 
$p, q\leq x$ and $q, r\leq y$. Hence $q\leq x\wedge y\in \Mea(E)$. 
Moreover, $x=(x\ominus (x\wedge y))\oplus (x\wedge y)$, 
$y=(y\ominus (x\wedge y))\oplus (x\wedge y)$ and 
$(x\ominus (x\wedge y))\oplus (x\wedge y)\oplus (y\ominus (x\wedge y))$ exists 
since $p\oplus q\oplus r=x\oplus r$ exists and $y\ominus (x\wedge y)\leq r=y\ominus q$. 
Let us put $z=(x\ominus (x\wedge y))\oplus (x\wedge y)\oplus (y\ominus (x\wedge y))$. 
Since $E$ is sharply dominating we have that $z=z_S\oplus z_M$, 
$z_S\in \Sh(E)$ and $z_M\in\Mea(E)$. 

Since $x\comp y$ there is a block $B$ of $E$ such that $
x, y, x\wedge y, z, z_S, z_M\in B$. Hence $z_S\in \C(B)$ and 
therefore  
$z_S= z_S\wedge \left((x\ominus (x\wedge y))\oplus (x\wedge y)\oplus (y\ominus (x\wedge y))\right)=%
((z_S\wedge x)\ominus ((z_S\wedge x)\wedge (z_S\wedge y)))%
\oplus ((z_S\wedge x)\wedge (z_S\wedge y))\oplus ((z_S\wedge y)\ominus ((z_S\wedge x)%
\wedge (z_S\wedge y)))$. Let us put $u=z_S\wedge x$ and $v=z_S\wedge y$. Then 
$z_S= (u\ominus (u\wedge v))\oplus (u\wedge v)\oplus (v\ominus (u\wedge v))$, 
$u, v\in \Mea(E)\cap B$, $\wwidehat{u}, \wwidehat{v}\in B$.

Recall first that $\wwidehat{u}=\wwidehat{v}=\wwidehat{v\ominus (u\wedge v)}=z_S$. 
This follows immediately from Statement \ref{jmpy2}, (ii).

Since $(u\ominus (u\wedge v))\wedge (v\ominus (u\wedge v))=0$ we have by 
\cite[Proposition 3.4]{pulmblok} that $z_S$ is the minimal upper bound of $u$ and $v$. 
Therefore $z_S=u\vee_{[0, z_S]} v$. From Proposition \ref{modjen}, (iii) we have that 
$u\vee_{[0, z_S]} v\in \Mea(E)$. Hence $z_S=0$ and 
$z=z_M=(x\ominus (x\wedge y))\oplus (x\wedge y)\oplus %
(y\ominus (x\wedge y))\in\Mea(E)$, i.e., $x\comp_{\Mea(E)} y$. 

\noindent{}(ii)$\implik$(iii): Assume that $x\comp_{\Mea(E)} y$, i.e., 
$z=(x\ominus (x\wedge y))\oplus_{\Mea(E)} (x\wedge y)\oplus %
(y\ominus (x\wedge y))$ exists. Again by \cite[Proposition 3.4]{pulmblok} 
we have that $z\in\Mea(E)$ is the minimal upper bound 
of $x$ and $y$. Let $m\in\Mea(E)$ be an upper bound of $x$ and $y$. 
Then $m\wedge z$ is an upper bound of $x$ and $y$, $m\wedge z\leq z$ and 
hence $m\geq m\wedge z= z$. It follows that  $(x\vee_{\Mea(E)}  y)\ominus y=%
x \ominus (x\wedge y)$.

\noindent{}{(iii)$\implik$(i)}:  It is enough to put $d=x\wedge y$ and 
$c=x\vee_{\Mea(E)}  y$. Then 
$d\leq x\leq c$ and $d\leq y\leq c$ such that 
$c\ominus x=y\ominus d$, i.e., $x\comp y$.
\end{proof}

\section{Triple Representation Theorem for orthocomplete \\
homogeneous effect algebras}\label{tretikapitola}

In what follows $E$ will be always a homogeneous meager-orthocomplete 
sharply dominating    
effect algebra. Then $\Sh(E)$ is a sub-effect algebra 
of $E$ and $\Mea(E)$ equipped with a partial 
operation $\oplus_{\Mea(E)}$ which is defined, for all $x, y\in \Mea(E)$, 
by $x\oplus_{\Mea(E)} y$ exists
if and only if $x\oplus_E y$  exists and $x\oplus_E y\in \Mea(E)$ in which 
case $x\oplus_{\Mea(E)} y=x\oplus_E y$ is a generalized  effect algebra. 
Moreover, we have a map 
$h:\Sh(E)\to 2^{\Mea(E)}$ that is given by 
$h(s)=\{x\in \Mea(E) \mid x\leq s\}$. As in \cite{jenca} for complete 
lattice effect algebras we will 
prove the following theorem. 

\medskip

\noindent{\bfseries Triple Representation Theorem}
 {\em The triple $((\Sh(E),\oplus_{\Sh(E)}),$ $(\Mea(E),\oplus_{\Mea(E)}),$ $h)$
characterizes $E$ up to isomorphism within the class of all 
homogeneous meager-orthocomplete sharply
dominating effect algebras.}
\medskip

We have to  construct an isomorphic copy of the original effect algebra
$E$ from the triple $(\Sh(E), \Mea(E), h)$. To do this we will first
construct  the
following mappings in terms of the triple.

\medskip

\begin{tabular}{@{}c@{}l}
&\begin{minipage}{0.9454\textwidth}
\begin{itemize}
\item[(M1)] The mapping\quad $\wwidehat{\phantom{x}}:\Mea(E) \to \Sh(E)$.
\item[(M2)] For every $s\in \Sh(E)$, a partial mapping 
$\pi_{s}:\Mea(E) \to h(s)$, which is given by 
$\pi_{s}(x)=x\wedge_{E} s$ whenever $\pi_{s}(x)$ is defined.
\item[(M3)] The mapping\, $R:\Mea(E) \to \Mea(E)$ 
given by $R(x)=\wwidehat{{x}}\ominus_E x$.
\item[(M4)] The partial mapping\, \mbox{$S:\Mea(E) \times \Mea(E)\to \Sh(E)$} 
given by $S(x, y)$ is defined if and only if 
the set ${\mathscr S}(x, y)=\{z\in \Sh(E)\mid z\wedge x \ \text{and}\ z\wedge y\ \text{exist}, %
z=(z\wedge x)\oplus_E (z\wedge y)\}$ has a top element 
$z_0\in {\mathscr S}(x, y)$ 
in which case $S(x, y)=z_0$.
\end{itemize}
\end{minipage}
\end{tabular}

\medskip

Since $E$ is sharply dominating 
we have that, for all $x\in \Mea(E)$, 
$$\wwidehat{x}=\bigwedge_{E}\{s\in \Sh(E) \mid x\in h(s)\}=%
\bigwedge_{\Sh(E)}\{s\in \Sh(E) \mid x\in h(s)\}.$$ 

{\renewcommand{\labelenumi}{{\normalfont  (\roman{enumi})}}
\begin{lemma}\label{m2}
Let $E$ be a homogeneous meager-orthocomplete sharply
dominating  effect algebra, $s\in \Sh(E)$ and  $x\in \Mea(E)$. 
Then 
\begin{enumerate}
\settowidth{\leftmargin}{(iiiii)}
\settowidth{\labelwidth}{(iii)}
\settowidth{\itemindent}{(ii)}
\item $\bigvee_{M(E)}\{y\in \Mea(E) \mid y\leq x, y\leq s\}$ exists.
Moreover, if $x\wedge_{E} s$ exists then $x\wedge_{E} s\in \Mea(E)$ and 
$$x\wedge_{E} s=\bigvee_{E}\{y\in E \mid y\leq x, y\leq s\}=%
\bigvee_{M(E)}\{y\in \Mea(E) \mid y\leq x, y\leq s\}.$$
\item If $x\comp s$ then $x\wedge_{E} s$ exists and 
$$x\wedge_{E} s=\bigvee_{M(E)}\{y\in \Mea(E) \mid y\leq x, y\leq s\}.$$
\end{enumerate}
\end{lemma}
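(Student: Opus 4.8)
The plan is to establish both parts by exhibiting the relevant suprema explicitly and identifying them with the meet $x\wedge_E s$ whenever the latter exists. For (i), first I would fix a block $B$ containing $x$; by Corollary~\ref{duscduya} we have $[0,x]\subseteq B$, and since $s\in\Sh(E)$, the set $\{y\in\Mea(E)\mid y\le x,\ y\le s\}$ is exactly $\{y\in[0,x]\mid y\le s\}$. Now $[0,x]$ is a complete MV-effect algebra by Proposition~\ref{ocmdcduya}, so I would argue that within $[0,x]$ the supremum $\bigvee_{[0,x]}\{y\in[0,x]\mid y\le s\}$ exists; because $[0,x]\subseteq\Mea(E)$ is a sub-generalized effect algebra and $\Mea(E)$ is orthocomplete, this supremum is also the supremum $\bigvee_{\Mea(E)}$ of the same family (one can realize it as an orthogonal sum in $\Mea(E)$, bounded by $x$). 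That gives the existence claim. For the "moreover" clause, suppose $x\wedge_E s$ exists. Then $x\wedge_E s\le x$, so $x\wedge_E s\in\Mea(E)$, and it is clearly the greatest element of $\{y\in E\mid y\le x,\ y\le s\}$; hence it equals $\bigvee_E$ of that set. Since every $y$ in this set lies in $[0,x]\subseteq\Mea(E)$, the set coincides with $\{y\in\Mea(E)\mid y\le x,\ y\le s\}$, and its supremum computed in $E$, in $[0,x]$, and in $\Mea(E)$ all agree with $x\wedge_E s$.

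For part (ii), assume $x\comp s$. Then there is a block $B$ of $E$ with $x,s\in B$; since $s\in\Sh(E)$, in fact $s\in\Sh(E)\cap B=\C(B)$ by Statement~\ref{gejzasum}(vii). A central element is principal, so the meet $x\wedge_B s$ exists in $B$, and by Proposition~\ref{modjen}(i) (applied with $v=s$, $y=x$, noting $x\in\Mea(E)$ and $x,s$ in the same block) we get that $x\wedge_E s$ exists and $x\wedge_E s=x\wedge_B s$. In particular $x\wedge_E s\le x$, so $x\wedge_E s\in\Mea(E)$, and the second assertion of (ii) follows from the "moreover" clause of part (i).

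The main obstacle I anticipate is the interplay between suprema computed in the three structures $E$, $[0,x]$, and $\Mea(E)$: one must be careful that the join $\bigvee_{\Mea(E)}$ of the family $\{y\in\Mea(E)\mid y\le x,\ y\le s\}$ — which is a priori only a directed-completeness statement inside the generalized effect algebra $\Mea(E)$ — really exists and coincides with the MV-lattice join inside the interval $[0,x]$. The orthocompleteness of $\Mea(E)$ together with boundedness by $x$ is the tool that makes this go through: the relevant family can be replaced by the orthogonal family of "differences" and summed, and the resulting orthogonal sum, lying below $x$, is the desired supremum in $\Mea(E)$; that it equals the lattice join in $[0,x]$ is then a routine comparison in the complete MV-effect algebra $[0,x]$. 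Once this identification is in place, both parts reduce to the observations above about $x\wedge_E s$ being the top element of a set all of whose members are meager.
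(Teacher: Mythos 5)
Your ``moreover'' clause and your part (ii) are fine: in (ii), invoking Proposition~\ref{modjen}(i) (with $y:=x$, $v:=s$ inside a block containing both, $s\in\C(B)$) is essentially the paper's own argument packaged in an earlier lemma, and reducing the displayed equality to the ``moreover'' clause of (i) is legitimate. The genuine gap is in the existence claim of (i), precisely at the step you yourself flag as the main obstacle. From Proposition~\ref{ocmdcduya} you correctly get that $z=\bigvee_{[0,x]}\{y\in \Mea(E)\mid y\le x,\ y\le s\}$ exists, but the assertion that this is automatically the supremum in $\Mea(E)$ ``because $[0,x]\subseteq\Mea(E)$ is a sub-generalized effect algebra and $\Mea(E)$ is orthocomplete'' is not a proof: a join computed inside the downset $[0,x]$ need not be a join in the larger poset, since an upper bound $m\in\Mea(E)$ of the family need not lie in $[0,x]$, and nothing said so far forces $z\le m$. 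Your proposed repair --- replace the family by an orthogonal family of ``differences'' and sum it in $\Mea(E)$ --- does not close this: writing $z$ as $\bigoplus^{\Mea(E)}_{i} d_i$ only identifies $z$ as the least upper bound in $\Mea(E)$ of the finite partial sums of the $d_i$, and those partial sums are built from joins formed in $[0,x]$, so one still has no reason why they lie below an arbitrary upper bound $m$ of the original family; the difficulty is merely relocated. Note also that the family $\{y\mid y\le x,\ y\le s\}$ is not known to be directed (the sharp element $s$ need not belong to a block containing $x$), so there is no canonical chain along which to take differences.

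The paper closes exactly this gap by using Proposition~\ref{dusminimax}: $\Mea(E)$ is a meet semilattice, so for any upper bound $m\in\Mea(E)$ of the family the meet $m\wedge_{E} z$ exists and is meager (hence lies in $[0,x]$); it is again an upper bound of the family, so minimality of $z$ in $[0,x]$ gives $z\le m\wedge_{E} z\le m$, i.e.\ $z=\bigvee_{\Mea(E)}\{y\in\Mea(E)\mid y\le x,\ y\le s\}$. With this one observation your argument for (i) is complete; without it (or some equivalent appeal to the meet structure of $\Mea(E)$), the central existence claim remains unproven.
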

\begin{proof} (i): Note that 
$\{y\in \Mea(E) \mid y\leq x, y\leq s\}\subseteq [0, x]$. Since 
from Proposition \ref{ocmdcduya} we have that   $[0, x]$ is 
a complete MV-effect algebra we get that 
$z=\bigvee_{[0, x]}\{y\in \Mea(E) \mid y\leq x, y\leq s\}$ exists. 
Let us show that $z=\bigvee_{\Mea(E)}\{y\in \Mea(E) \mid y\leq x, y\leq s\}$. 
Let $m\in \Mea(E)$ such that $m$ is an upper bound of the set 
$\{y\in \Mea(E) \mid y\leq x, y\leq s\}$. Since $m\wedge_E z\in \Mea(E)$ 
exists and $m\wedge_E z\in [0, x]$ is an upper bound of the set 
$\{y\in \Mea(E) \mid y\leq x, y\leq s\}$ we have that 
$z\leq m\wedge_E z \leq m$. 


Now, assume that $x\wedge_{E} s$ exists  and 
let us check that $x\wedge_{E} s=z$. Clearly, 
$x\wedge_{E} s=\bigvee_{E}\{y\in E \mid y\leq x, y\leq s\}\in  \Mea(E)$, 
i.e., $y\leq x\wedge_{E} s \leq z$ for all $y\in \Mea(E)$ such that 
$y\leq x$ and  $y\leq s$. This yields $x\wedge_{E} s=z$.

\noindent{}(ii): By Statement \ref{gejzasum}, (e) there is some block 
$B$ of $E$ such that $x, s\in B$. Hence $s\in\C(B)$ by 
Statement \ref{gejzasum}, (g). This yields that $x\wedge_{B} s\in B$ exists 
and since $x\wedge_{B} s\leq x$ we have that $x\wedge_{B} s\in \Mea(E)$. 
>From Corollary \ref{duscduya} we know that 
$[0, x]_{E}\subseteq B$. Hence 
$x\wedge_{B} s\in \{y\in \Mea(E) \mid y\leq x, y\leq s\}\subseteq B$ 
and $z\in B$. This invokes that $x\wedge_{B} s \leq z$.
Then $z\wedge_{B} s\in B$ exists in $B$, 
$z\wedge_{B} s\leq x$, $z\wedge_{B} s\leq s$  and, for all 
 $y\in \Mea(E)$ such that $y\leq x$ and  $y\leq s$, we have that 
 $y\leq z\wedge_{B} s$. Hence $z\leq z\wedge_{B} s\leq s$. 
 Altogether $z=x\wedge_{B} s$. Let us check that $z=x\wedge_{E} s$. 
 Assume that $g\in E$, $g\leq s$ and $g\leq x$. Then $g\in B$ and 
 therefore $g\leq x\wedge_{B} s$. It follows that
 $z=x\wedge_{E} s=x\wedge_{B} s$.
\end{proof}

Hence, for all $s\in \Sh(E)$ and for all $x\in \Mea(E)$, we 
put $z=\bigvee_{M(E)}\{y\in \Mea(E) \mid y\leq x, y\in h(s)\}$. 
Then $\pi_{s}(x)$ is defined if $z\in h(s)$ in which case 
$$
\begin{array}{@{}r@{}l}
\pi_{s}&(x)=z = x\wedge_{E} s=%
\bigvee_{E}\{y\in E \mid y\leq x, y\leq s\}\\
\phantom{\text{\huge I}}&=\bigvee_{E}\{y\in \Mea(E) \mid y\leq x, y\in h(s)\}
=\bigvee_{M(E)}\{y\in \Mea(E) \mid y\leq x, y\in h(s)\}.
\end{array}
$$

Now, let us construct the mapping  $R$ as in \cite{jenca}. 

{\renewcommand{\labelenumi}{{\normalfont  (\roman{enumi})}}
\begin{lemma}\label{m3}
Let $E$ be a  homogeneous meager-orthocomplete sharply
dominating  effect algebra   and let $x\in \Mea(E)$. 
Then $y=\wwidehat{x}\ominus x$ is the only element such that
\begin{enumerate}
\settowidth{\leftmargin}{(iiiii)}
\settowidth{\labelwidth}{(iii)}
\settowidth{\itemindent}{(ii)}
\item $y\in \Mea(E)$ such that $\wwidehat{y}=\wwidehat{x}$. 
\item $x \oplus_{\Mea(E)} (y\ominus_{\Mea(E)} (x\wedge y))$ 
exists and 
$x \oplus_{\Mea(E)} (y\ominus_{\Mea(E)} (x\wedge y))\in %
h(\wwidehat{x})$.
\item For all $z \in h(\wwidehat{x})$, 
$z \oplus_{\Mea(E)} x \in h(\wwidehat{x})$
if and only if $z \leq y$ and 
$\wwidehat{y\ominus_{\Mea(E)} z}=\wwidehat{x}$.
\end{enumerate}
\end{lemma}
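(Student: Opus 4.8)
The plan is to prove that $y=\wwidehat{x}\ominus x$ satisfies (i)--(iii), and that it is the unique such element. Existence is essentially a repackaging of results already established: (i) is Statement~\ref{hatrozdilu} (or \ref{jmpy2}(i) together with $\wwidehat{R(x)}=\wwidehat{\wwidehat{x}\ominus x}=\wwidehat{x}\ominus\widetilde{x}$, which for $x\in\Mea(E)$ equals $\wwidehat{x}$ since $\widetilde{x}=0$); for (ii) one observes that $x\wedge y$ exists by Proposition~\ref{dusminimax}, that $\wwidehat{x}=\wwidehat{y}$ lets us apply Theorem~\ref{modjensup} to get $x\comp y$, hence $x\oplus_{\Mea(E)}(y\ominus(x\wedge y))$ exists and equals $x\vee_{\Mea(E)}y$, which lies below $\wwidehat{x}$ and therefore in $h(\wwidehat{x})$; for (iii), given $z\in h(\wwidehat{x})$ one uses $z\oplus x\in h(\wwidehat{x})$ iff $z\oplus x\leq\wwidehat{x}=x\oplus(\wwidehat{x}\ominus x)$ iff $z\leq\wwidehat{x}\ominus x=y$, and then Lemma~\ref{dusuplem}(ii) (applied in $\Mea(E)$, using $\wwidehat{y}=\wwidehat{x}$ so that $y\ominus\widetilde{y}=y$) converts $z\leq y$ into $z\leq y$ and $\wwidetilde{y\ominus z}=\wwidetilde{y}$; but since everything is meager, $\widetilde{\phantom{x}}$ is identically $0$, so the real content of the second clause is the dual statement $\wwidehat{y\ominus z}=\wwidehat{x}$, which follows from Lemma~\ref{suplem}/\ref{dusuplem} applied to $y$ inside the interval $[0,\wwidehat{x}]$.

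**The uniqueness argument.** This is where the work lies. Suppose $y'\in\Mea(E)$ also satisfies (i)--(iii); we must show $y'=\wwidehat{x}\ominus x$. First, (i) gives $\wwidehat{y'}=\wwidehat{x}$. Plugging $z=y\ (=\wwidehat{x}\ominus x)$ into (iii) for $y'$, and noting $y\in h(\wwidehat{x})$ with $y\oplus x=\wwidehat{x}\in h(\wwidehat{x})$ (by (ii) for the known solution, or directly), we get $y\leq y'$ and $\wwidehat{y'\ominus y}=\wwidehat{x}$. Symmetrically, plugging $z=y'$ into (iii) for $y$ gives $y'\leq y$, once we check $y'\oplus x\in h(\wwidehat{x})$; this latter fact comes from (ii) for $y'$, which says $x\oplus(y'\ominus(x\wedge y'))\in h(\wwidehat{x})$, combined with the observation that $y'\oplus x$ must also sit below $\wwidehat{x}$ — and here one needs to argue that $x\wedge y'=0$, or route around it. The cleanest route: from (ii) for $y'$ together with \cite[Proposition 3.4]{pulmblok} (the minimal-upper-bound characterization used in the proof of Theorem~\ref{modchov}), one identifies $x\oplus(y'\ominus(x\wedge y'))$ as $x\vee_{\Mea(E)}y'$, and since $\wwidehat{x}=\wwidehat{y'}$, Proposition~\ref{modjen}(iii) says this supremum lies in $[0,\wwidehat{x}]$; then $x\oplus(y'\ominus(x\wedge y'))\leq\wwidehat{x}$ forces, after subtracting, that $y'\ominus(x\wedge y')\leq\wwidehat{x}\ominus x=y$. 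Combined with $x\wedge y'\leq x\wedge y$ (both $\leq x$) and the fact that $x\wedge y$ is the unique maximal lower bound (Proposition~\ref{dusminimax}), this should pin $y'\leq y$. Thus $y=y'$.

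**Main obstacle.** The delicate point is the interaction of condition (ii) — phrased via the "join minus meet" expression $x\oplus(y\ominus(x\wedge y))$ rather than via a clean $x\oplus y$ — with membership in $h(\wwidehat{x})$. One must repeatedly translate between "$x\comp y$ in $\Mea(E)$", "$x\vee_{\Mea(E)}y$ exists and lies below $\wwidehat{x}$", and the defining formula, and each translation leans on Theorem~\ref{modchov}, Proposition~\ref{modjen}(iii), and the minimal-upper-bound lemma from \cite{pulmblok}. I expect the bookkeeping to establish $y'\leq y$ from condition (ii) alone to be the step requiring the most care; once both $y\leq y'$ and $y'\leq y$ are in hand, antisymmetry finishes it. A secondary subtlety is making sure that all the $\wwidehat{\phantom{x}}$'s, $\ominus_{\Mea(E)}$'s and $\vee_{\Mea(E)}$'s computed inside $\Mea(E)$ agree with those computed in $E$ (or in the block $B\ni x,y,y'$); this is guaranteed by Corollary~\ref{duscduya}, Proposition~\ref{dusminimax} and Lemma~\ref{m2}, but it should be invoked explicitly at each use.
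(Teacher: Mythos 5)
Your existence argument for $y=\wwidehat{x}\ominus x$ is essentially the paper's (though note that in (iii) the clause $\wwidehat{y\ominus_{\Mea(E)}z}=\wwidehat{x}$ is not a triviality that ``follows from Lemma~\ref{suplem}'': it is exactly the extra condition needed to guarantee that $z\oplus x$ is \emph{meager}, and the paper derives it from Statement~\ref{jmpy2}(ii); $z\le y$ alone gives only $z\oplus x\le\wwidehat{x}$). The real problem is your uniqueness argument, which has a fatal gap at its very first step. You propose to plug $z=y=\wwidehat{x}\ominus x$ into condition (iii) for $y'$, ``noting $y\oplus x=\wwidehat{x}\in h(\wwidehat{x})$''. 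But $h(s)\subseteq\Mea(E)$ by definition, and $\Sh(E)\cap\Mea(E)=\{0\}$, so $\wwidehat{x}\notin h(\wwidehat{x})$ unless $x=0$; the hypothesis of the forward direction of (iii) simply fails for $z=y$, and no conclusion $y\le y'$ can be drawn. The symmetric step is worse: you would need $y'\oplus x\in h(\wwidehat{x})$, which is actually \emph{false} in the case $y'=y$ (the case you are trying to reach), so it cannot be provable from (i)--(iii). Your fallback route also breaks down: $x\wedge y'\le x\wedge y$ does not follow from ``both are $\le x$'', and even granting $y'\ominus(x\wedge y')\le y$ and $x\wedge y'\le y$, one cannot conclude $y'\le y$ (sums of two elements below $y$ need not stay below $y$ in an effect algebra, even an MV-effect algebra).

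The paper's uniqueness proof uses a genuinely different mechanism that your proposal misses. First, for two solutions $y_1,y_2$ that are \emph{compatible}, it shows $u=y_1\wedge y_2$ again satisfies (i)--(iii) (via Proposition~\ref{modjen}(iv) and Theorem~\ref{modchov}), puts $t=y_1\ominus u$, and proves by induction that $(nt)\oplus_{\Mea(E)}x\in h(\wwidehat{x})$ for every $n$, alternating condition (iii) for $u$ and for $y_1$; since $E$ is Archimedean (Theorem~\ref{archimde}), $t=0$, whence $y_1\le y_2$ and, by symmetry, $y_1=y_2$. Only multiples $nt$ of the \emph{difference} are ever substituted into (iii), which is what keeps the meagerness hypothesis available --- this is the idea your direct antisymmetry attempt has no substitute for. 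Second, to apply this to an arbitrary solution $y$ and $y_2=\wwidehat{x}\ominus x$, one must first establish $y\comp(\wwidehat{x}\ominus x)$; the paper gets $x\comp y$ from (ii) and then invokes \cite[Theorem 36]{chovanec}. Without the compatibility step and the Archimedean induction, the uniqueness claim is not proved.
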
}
\begin{proof}  Let us prove that $y=\wwidehat{x}\ominus x$ satisfies 
\mbox{(i)-(iii)}. 
By Statement \ref{jmpy2} we get that (i) is satisfied. Evidently, 
$x\wedge y$ exists and $x\oplus y=\wwidehat{x}$. 
Invoking Theorem \ref{modchov} we obtain that 
$x\oplus_{\Mea(E)} (y\ominus_{\Mea(E)} (x\wedge y)) \in \Mea(E)$. 
Let $z \in h(\wwidehat{x})$ and assume that 
$z \oplus_{\Mea(E)} x \in h(\wwidehat{x})$. Since 
$z \oplus_{\Mea(E)} x=z \oplus x \leq x\oplus y=\wwidehat{x}$ we get 
that $z\leq y$. Moreover,  $(x\oplus z)\oplus (y\ominus z)=\wwidehat{x}$ 
yields again by Statement \ref{jmpy2} that $\wwidehat{y\ominus z}=\wwidehat{x}$. 
Now, let $z \in h(\wwidehat{x})$, $z \leq y$ and 
$\wwidehat{y\ominus_{\Mea(E)} z}=\wwidehat{x}$. Then 
$\wwidehat{x}=x\oplus y= x\oplus (y\ominus z)\oplus z= (x\oplus z)\oplus (y\ominus z)$. 
Since $y\ominus z\in\Mea(E)$ we have that $x\oplus z\in \Mea(E)$. Hence 
$z \oplus_{\Mea(E)} x \in h(\wwidehat{x})$. 

Let us verify that $y=\wwidehat{x}\ominus x$ is the only element satisfying 
\mbox{(i)-(iii)}. Let the elements $y_1$ and $y_2$ of $E$ satisfy 
\mbox{(i)-(iii)} and $y_1\comp y_2$. Let us put $u=y_1\wedge y_2$. 
Then $\wwidehat{u}=\wwidehat{x}$ by Proposition \ref{modjen}, (iv). By (ii) 
for $y_1$ we know that $x\vee_{\Mea(E)} y_1\in \Mea(E)$ exists. Since 
$[0, x\vee_{\Mea(E)} y_1]$ is a complete lattice we have that 
$x\vee_{[0, x\vee_{\Mea(E)} y_1]} (y_1\wedge y_2)\in \Mea(E)$ exists. 
Hence also 
$x\vee_{\Mea(E)} (y_1\wedge y_2)\in \Mea(E)$ exists and 
$x\vee_{[0, x\vee_{\Mea(E)} y_1]} (y_1\wedge y_2)=%
x\vee_{\Mea(E)} (y_1\wedge y_2)$. This yields that $x \comp (y_1\wedge y_2)$ 
and from Theorem \ref{modchov} we get that 
$x\oplus_{\Mea(E)} ((y_1\wedge y_2)\ominus_{\Mea(E)} (x\wedge (y_1\wedge y_2)))$ 
exists and $x\oplus_{\Mea(E)} ((y_1\wedge y_2)\ominus_{\Mea(E)} %
(x\wedge (y_1\wedge y_2)))=x\vee_{\Mea(E)} (y_1\wedge y_2)$. 
Clearly, for any  $z \in h(\wwidehat{x})$, we have 
$z \oplus_{\Mea(E)} x \in h(\wwidehat{x})$ iff 
$\left(z \leq y_1 \ \text{and} \ 
\wwidehat{y_1\ominus_{\Mea(E)} z}=\wwidehat{x}\right)$ and 
$\left(z \leq y_2\ \text{and }\ 
\wwidehat{y_2\ominus_{\Mea(E)} z}=\wwidehat{x}\right)$ iff (by 
Proposition \ref{modjen}, (iv)) %
$\left(z \leq (y_1\wedge y_2)\ \text{and}\ 
\wwidehat{(y_1\wedge y_2)\ominus_{\Mea(E)} z}=\wwidehat{x}\right)$. 
Hence $u=y_1\wedge y_2$ satisfies \mbox{(i)-(iii)}. Let us put 
$t=y_1\ominus u$. 

We will prove that, for all $n\in {\mathbb N}$, 
$(nt) \oplus_{\Mea(E)} x \in h(\wwidehat{x})$. For $n=0$ the 
statement is true. Assume that the statement is valid for some 
$n\in {\mathbb N}$. Then by (iii) for $u$ we have that 
$nt \leq u$ and 
$\wwidehat{u\ominus_{\Mea(E)} (nt)}=\wwidehat{x}$. Since 
$u\oplus t=y_1\in \Mea(E)$ we get that $(n+1)t$ is defined, 
$(n+1)t \leq y_1\leq \wwidehat{x}$, 
$u\ominus_{\Mea(E)} (nt)=y_1\ominus_{\Mea(E)} ((n+1)t)$  and 
hence $\wwidehat{y_1\ominus_{\Mea(E)} ((n+1)t)}=\wwidehat{x}$. 
By (iii) for $y_1$ we obtain that 
$((n+1)t) \oplus_{\Mea(E)} x \in h(\wwidehat{x})$. In particular, 
$nt$ exists  for all $n\in {\mathbb N}$. Since $E$ is 
Archimedean, i.e., $t=0$ and $y_1\wedge y_2=y_1$. This yields 
that $y_1\leq y_2$. Interchanging $y_1$ with $y_2$ we get that 
$y_2\leq y_1$, i.e., $y_1=y_2$.

Now, let us assume that some $y$ satisfies \mbox{(i)-(iii)} and 
put $y_1=y$, $y_2=\wwidehat{x}\ominus x$. Since 
$x \oplus_{\Mea(E)} (y\ominus_{\Mea(E)} (x\wedge y))\leq \wwidehat{x}$ 
exists by (ii) we have that $x\comp y$ and this yields that $\wwidehat{x}\comp y$. 

By \cite[Theorem 36]{chovanec} we get that 
$y_1=y\comp (\wwidehat{x}\ominus x)=y_2$. Therefore $y=\wwidehat{x}\ominus x$.
\end{proof}

What remains is the partial mapping $S$. Let $x, y\in \Mea(E)$. 
Note that by Statement \ref{jmpy2}, (ii) and Lemmas \ref{m2} and \ref{m3} 
${\mathscr S}(x, y)=\{z\in \Sh(E)\mid %
z=(z\wedge x)\oplus_E (z\wedge y)\}=\{z\in \Sh(E)\mid %
\pi_z(x) \ \text{and}\ \pi_z(x)\ \text{are defined},  
z=\wwidehat{\pi_z(x)}\ \text{and}\ R(\pi_z(x))=\pi_z(y)\}$.
Hence whether $S(x, y)$ is defined or not we are able to decide 
in terms of the triple. Since the eventual top element $z_0$ of 
${\mathscr S}(x, y)$ is in $\Sh(E)$ our definition of ${S}(x, y)$ 
is correct.

\begin{lemma}\label{pommeag} Let $E$ be a homogeneous meager-orthocomplete sharply
dominating  effect algebra, $x, y\in \Mea(E)$. Then $x\oplus_{E} y$ exists 
in $E$
iff $S(x, y)$ is defined in terms of the triple $(\Sh(E), \Mea(E), h)$ and 
$(x\ominus_{\Mea(E)} (S(x, y)\wedge x))%
\oplus_{\Mea(E)} (y\ominus_{\Mea(E)} (S(x, y)\wedge y))$ 
exists in $\Mea(E)$ such that %
$(x\ominus_{\Mea(E)} (S(x, y)\wedge x))%
\oplus_{\Mea(E)} (y\ominus_{\Mea(E)} (S(x, y)\wedge y))%
\in h(S(x, y)')$. Moreover, in 
that case
$$x\oplus_{E} y=\underbrace{S(x, y)}_{\in \Sh(E)}\oplus_{E} %
(\underbrace{(x\ominus_{\Mea(E)} (S(x, y)\wedge x))%
\oplus_{\Mea(E)} (y\ominus_{\Mea(E)} (S(x, y)\wedge y))}_{\in \Mea(E)}).$$
\end{lemma}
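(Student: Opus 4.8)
The plan is to prove both implications by passing to a block of $E$ containing $x$ and $y$ and exploiting the canonical sharp/meager decomposition of Corollary~\ref{gejza}. For necessity, suppose $x\oplus_E y$ exists and put $z=x\oplus_E y$. Since $x\comp y$ (take $q=0$ in the characterisation of compatibility), Statement~\ref{gejzasum} gives a block $B$ with $x,y\in B$, whence $z\in B$ because $B$ is a sub-effect algebra. Write $z=z_S\oplus z_M$ with $z_S=\widetilde z\in\Sh(E)$ and $z_M=z\ominus\widetilde z\in\Mea(E)$; by Proposition~\ref{corcduya} both $z_S,z_M\in B$, and by Statement~\ref{gejzasum}~(vii), $z_S\in\C(B)$. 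Applying Statement~\ref{gejzapulm}~(i) inside $B$ to the central element $z_S$ and the sum $z=x\oplus y$ gives $z_S=z_S\wedge_B z=(z_S\wedge_B x)\oplus(z_S\wedge_B y)$, and Proposition~\ref{modjen}~(i) identifies $z_S\wedge_B x$ with $z_S\wedge_E x=\pi_{z_S}(x)$ and likewise $z_S\wedge_B y=\pi_{z_S}(y)$. Hence $z_S=(z_S\wedge x)\oplus_E(z_S\wedge y)$, i.e.\ $z_S\in{\mathscr S}(x,y)$.

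Next I would show $z_S$ is the top element of ${\mathscr S}(x,y)$: if $w\in{\mathscr S}(x,y)$ then $w=(w\wedge x)\oplus(w\wedge y)$ with $w\wedge x\le x$ and $w\wedge y\le y$, so, both $(w\wedge x)\oplus(w\wedge y)$ and $x\oplus y$ being defined, monotonicity of $\oplus$ gives $w\le x\oplus y=z$; as $w\in\Sh(E)$ and $\widetilde z$ is the greatest sharp element below $z$, $w\le\widetilde z=z_S$. Therefore $S(x,y)$ is defined and $S(x,y)=z_S$. For the displayed formula, centrality of $z_S$ in $B$ gives $x=(x\wedge z_S)\oplus(x\wedge z_S')$ and $y=(y\wedge z_S)\oplus(y\wedge z_S')$, so regrouping the four summands of $z$ yields $z=z_S\oplus\bigl((x\wedge z_S')\oplus(y\wedge z_S')\bigr)$, and cancellation gives $z_M=(x\wedge z_S')\oplus(y\wedge z_S')$. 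Since $x\wedge z_S'=x\ominus(x\wedge z_S)=x\ominus(S(x,y)\wedge x)$ and similarly for $y$, and all these elements are meager, $z_M$ is precisely the asserted meager part; moreover $z_M\le z_S'=S(x,y)'$, so $z_M\in h(S(x,y)')$, and $x\oplus_E y=z=S(x,y)\oplus_E z_M$.

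For sufficiency, write $s=S(x,y)\in{\mathscr S}(x,y)$, so $s=(s\wedge x)\oplus(s\wedge y)$ with $s\wedge x,s\wedge y$ existing, and assume $m:=(x\ominus_{\Mea(E)}(s\wedge x))\oplus_{\Mea(E)}(y\ominus_{\Mea(E)}(s\wedge y))$ exists in $\Mea(E)$ and lies in $h(s')$. Since $m\le s'$, the sum $s\oplus_E m$ exists. Put $a=s\wedge x$, $b=s\wedge y$, $c=x\ominus a$, $d=y\ominus b$; these are all meager, so the subtractions coincide with those computed in $E$, and one has $a\oplus c=x$, $b\oplus d=y$, $a\oplus b=s$, $c\oplus d=m$. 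As $(a\oplus b)\oplus(c\oplus d)=s\oplus m$ exists, $(a,b,c,d)$ is an orthogonal system, and the generalised associativity and commutativity of $\oplus$ give $x\oplus_E y=(a\oplus c)\oplus(b\oplus d)=s\oplus m=S(x,y)\oplus_E m$, which therefore exists and equals the required value.

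The crux is the necessity half, and specifically the claim that $z_S=\widetilde{x\oplus y}$ belongs to ${\mathscr S}(x,y)$ and is its greatest element. This is exactly the point where one must localise in a block, use that $\widetilde z$ is central there, and combine the distributivity of central elements over $\oplus$ (Statement~\ref{gejzapulm}) with the coincidence of the block meet and the meet in $E$ for meager elements (Proposition~\ref{modjen}~(i), Lemma~\ref{m2}); once $S(x,y)=z_S$ has been established, everything else is routine orthoalgebraic bookkeeping with the canonical decomposition.
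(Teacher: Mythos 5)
Your proof is correct and follows essentially the same route as the paper's: decompose $z=x\oplus_E y$ as $z_S\oplus z_M$, localise in a block containing $x,y,z$, use centrality of $z_S=\widetilde{z}$ together with Statement~\ref{gejzapulm}~(i) to show $z_S$ is the top element of ${\mathscr S}(x,y)$, and then regroup the four summands to identify $z_M$ with the asserted meager sum, with the converse obtained by the same regrouping. The only cosmetic difference is that you cite Proposition~\ref{modjen}~(i) explicitly for $z_S\wedge_B x=z_S\wedge_E x$ and phrase the bookkeeping via $x\wedge z_S'$, which the paper does implicitly.
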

\begin{proof} Assume first that $x\oplus_{E} y$ exists 
in $E$ and let us put $z=x\oplus_{E} y$. Since $E$  is 
sharply dominating we have that
$z=z_S\oplus_{E}  z_M$ such that $z_S\in \Sh(E)$ and 
$z_M\in \Mea(E)$. Since $x\comp y$ by Statement 
\ref{gejzasum}, (e) there is  a  block $B$ 
of $E$ such that $x, y, z\in B$. 
By Statement \ref{gejzaoch}, (i) we obtain 
that $z_S,  z_M\in B$. Therefore $z_S\in \C(B)$ and by Statement 
\ref{gejzapulm}, (i) we have that 
 $z_S= z_S \wedge (x\oplus_{E} y)= z_S \wedge (x\oplus_{B} y)= 
 (z_S \wedge_{B} x)\oplus_{B} (z_S \wedge_{B} y)= %
(z_S \wedge x)\oplus_{E} (z_S \wedge y)$. Hence $z_S\in {\mathscr S}(x, y)$. 
Now, assume that $u\in {\mathscr S}(x, y)$. Then 
$u=(u \wedge x)\oplus_{E} (u \wedge y)\leq x\oplus_{E} y$. Since 
$u\in \Sh(E)$ we have that $u\leq z_S$, i.e., $z_S$ is the top 
element of ${\mathscr S}(x, y)$. Moreover, we have 
$$
\begin{array}{@{}r@{\,}c@{\,}l}
z_S\oplus_E z_M&=&x\oplus_{E} y\\
&=&\biggl(\bigl(S(x, y)\wedge x\bigr)\oplus_{E} %
\bigl(x\ominus_{E} (S(x, y)\wedge x)\bigr)\biggr)%
\oplus_E \\[4mm]
& &\biggl(\bigl((S(x, y)\wedge y)\bigr)\oplus_{E}
 \bigl(y\ominus_{E} (S(x, y)\wedge y)\bigr)\biggr)\\
\multicolumn{3}{r}{=S(x, y)\oplus_{E} %
\biggl(\bigl(x\ominus_{\Mea(E)} (S(x, y)\wedge x)\bigr)%
\oplus_{E} \bigl(y\ominus_{\Mea(E)} (S(x, y)\wedge y)\bigr)\biggr).}
 \end{array}
$$
It follows that $z_M=\bigl(x\ominus_{\Mea(E)} (S(x, y)\wedge x)\bigr)%
\oplus_{E} \bigl(y\ominus_{\Mea(E)} (S(x, y)\wedge y)\bigr)$, i.e., 
$z_M=\bigl(x\ominus_{\Mea(E)} (S(x, y)\wedge x)\bigr)%
\oplus_{\Mea(E)} \bigl(y\ominus_{\Mea(E)} (S(x, y)\wedge y)\bigr)$ and evidently 
$z_M\in h(z_S')$.

Conversely, let us assume that $S(x, y)$ is defined in terms of  $(\Sh(E), \Mea(E), h)$,  
$\bigl(x\ominus_{\Mea(E)} (S(x, y)\wedge x)\bigr)%
\oplus_{\Mea(E)} \bigl(y\ominus_{\Mea(E)} (S(x, y)\wedge y)\bigr)$ 
exists in $\Mea(E)$ and %
$\bigl(x\ominus_{\Mea(E)} (S(x, y)\wedge x)\bigr)%
\oplus_{\Mea(E)} \bigl(y\ominus_{\Mea(E)} (S(x, y)\wedge y)\bigr)%
\in h(S(x, y)')$. Then 
$\bigl(x\ominus_{\Mea(E)} (S(x, y)\wedge x)\bigr)%
\oplus_{\Mea(E)} \bigl(y\ominus_{\Mea(E)} (S(x, y)\wedge y)\bigr)%
\leq S(x, y)'$, i.e., 
$$
\begin{array}{r@{\,}c@{\,}l}
z&=&S(x, y)\oplus_{E}%
\biggl(\bigl(x\ominus_{\Mea(E)} (S(x, y)\wedge x)\bigr)%
\oplus_{\Mea(E)} \bigl(y\ominus_{\Mea(E)} %
(S(x, y)\wedge y)\bigr)\biggr)\\[4mm]
&=&\bigl((S(x, y)\wedge x)\oplus_{E} (S(x, y)\wedge y)\bigr) \oplus_{E}\\[1mm]%
&&\biggl(\bigl(x\ominus_{E} (S(x, y)\wedge x)\bigr)%
\oplus_{E} \bigl(y\ominus_{E}%
(S(x, y)\wedge y)\bigr)\biggr)=x\oplus_{E} y\\
\end{array}
$$
\noindent{}is defined.
\end{proof}

{\renewcommand{\labelenumi}{{\normalfont  (\roman{enumi})}}
\begin{theorem}\label{tripletheor}
Let E be a homogeneous meager-orthocomplete sharply
dominating  effect algebra. 
Let $\Tea(E)$ be a subset
of $\Sh(E) \times \Mea(E)$ given by
$$\Tea(E) =\{(z_S, z_M)\in \Sh(E) \times \Mea(E) \mid z_M \in  h(z_S')\}.$$
Equip $\Tea(E)$ with a partial binary operation $\oplus_{\Tea(E)}$ with 
$(x_S, x_M) \oplus_{\Tea(E)} (y_S, y_M)$ is defined if and  only if 
\begin{enumerate}
\item $S(x_M, y_M)$ is defined,
\item $z_S=x_S\oplus_{\Sh(E)} y_S \oplus_{\Sh(E)} S(x_M, y_M)$ is defined,
\item $z_M=\bigl(x_M\ominus_{\Mea(E)} (S(x_M, y_M)\wedge x_M)\bigr)%
\oplus_{\Mea(E)} \bigl(y_M\ominus_{\Mea(E)}%
(S(x_M, y_M)\wedge y_M)\bigr)$ is defined,
\item $z_M\in h(z_S')$. 
\end{enumerate}
In this case $(z_S, z_M)=(x_S, x_M) \oplus_{\Tea(E)} (y_S, y_M)$.
Let\/ $0_{\Tea(E)}=(0_E,0_E)$ and\/ $1_{\Tea(E)}=(1_E,0_E)$. Then 
$\Tea(E)=(\Tea(E), \oplus_{\Tea(E)}, 0_{\Tea(E)},$ $1_{\Tea(E)})$ 
is an effect algebra and the mapping 
$\varphi:E\to \Tea(E)$  given by 
$\varphi(x) = (\widetilde{x}, x\ominus_{E} \widetilde{x})$   
is an isomorphism of effect algebras.
\end{theorem}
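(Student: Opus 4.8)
The plan is to verify that $\varphi\colon E\to\Tea(E)$ is a bijection which transports the partial operation $\oplus_E$ to $\oplus_{\Tea(E)}$ and the constants $0_E,1_E$ to $0_{\Tea(E)},1_{\Tea(E)}$; once this is established, the axioms (Ei)--(Eiv) for $\Tea(E)$ follow by transporting them along $\varphi$ from $E$, and $\varphi$ is then by definition an isomorphism of effect algebras. (Recall that the map $x\mapsto\wwidehat{x}$, the maps $\pi_s$, $R$ and $S$ occurring in the definition of $\oplus_{\Tea(E)}$ have all been expressed purely in terms of the triple in Lemmas~\ref{m2}, \ref{m3} and in the discussion preceding Lemma~\ref{pommeag}, so $\oplus_{\Tea(E)}$ is genuinely built from the triple data.)

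First I would dispose of the elementary points. The map $\varphi$ is well defined: for $x\in E$ the element $\widetilde{x}\in\Sh(E)$ exists since $E$ is sharply dominating, and by Corollary~\ref{gejza} we have $x=\widetilde{x}\oplus(x\ominus_E\widetilde{x})$ with $x\ominus_E\widetilde{x}\in\Mea(E)$; as $\widetilde{x}\oplus(x\ominus_E\widetilde{x})=x\le 1$ we get $x\ominus_E\widetilde{x}\le\widetilde{x}'$, i.e. $x\ominus_E\widetilde{x}\in h(\widetilde{x}')$, so $\varphi(x)\in\Tea(E)$. Clearly $\varphi(0_E)=0_{\Tea(E)}$ and $\varphi(1_E)=(\widetilde{1},1\ominus\widetilde{1})=(1_E,0_E)=1_{\Tea(E)}$. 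Injectivity is immediate from $x=\widetilde{x}\oplus(x\ominus\widetilde{x})$. For surjectivity, given $(z_S,z_M)\in\Tea(E)$, the element $x:=z_S\oplus_E z_M$ is defined because $z_M\le z_S'$; since $x=z_S\oplus z_M$ with $z_S\in\Sh(E)$, $z_M\in\Mea(E)$, the uniqueness part of Corollary~\ref{gejza} gives $\widetilde{x}=z_S$ and $x\ominus\widetilde{x}=z_M$, hence $\varphi(x)=(z_S,z_M)$.

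The core is the equivalence: writing $x=x_S\oplus x_M$, $y=y_S\oplus y_M$ with $x_S=\widetilde{x}$, $x_M=x\ominus\widetilde{x}$ (and likewise for $y$), the sum $x\oplus_E y$ is defined iff conditions (i)--(iv) hold for the pairs $(x_S,x_M),(y_S,y_M)$, and in that case $\varphi(x\oplus_E y)=(z_S,z_M)$. For the forward direction, $x\oplus_E y$ defined gives $x\comp y$, hence a block $B$ with $x,y\in B$ (Statement~\ref{gejzasum}), whence $x_S,x_M,y_S,y_M\in B$ by Corollary~\ref{corcduya}, and $x_M,y_M\in\Mea(E)$; since $x_M\oplus y_M\le x\oplus y$ is defined, Lemma~\ref{pommeag} shows that $S(x_M,y_M)$ is defined (this is~(i)) and that $x_M\oplus_E y_M=S(x_M,y_M)\oplus_E z_M$ with $z_M\in h(S(x_M,y_M)')$ the element described in~(iii). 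Rearranging the finite orthosum via (Ei), (Eii), $x\oplus y=x_S\oplus y_S\oplus x_M\oplus y_M=\bigl(x_S\oplus y_S\oplus S(x_M,y_M)\bigr)\oplus z_M$; the first bracket is an orthosum of three sharp elements which exists in $E$, hence lies in $\Sh(E)$ because $\Sh(E)$ is a sub-effect algebra (Statement~\ref{gejzasum}) and so may be computed inside $\Sh(E)$, giving~(ii); and since $z_S\oplus z_M=x\oplus y$ is defined we get $z_M\le z_S'$, i.e.~(iv). Finally $x\oplus y=z_S\oplus z_M$ with $z_S\in\Sh(E)$, $z_M\in\Mea(E)$ forces, again by Corollary~\ref{gejza}, $\widetilde{x\oplus y}=z_S$ and $(x\oplus y)\ominus\widetilde{x\oplus y}=z_M$, so $\varphi(x\oplus y)=(z_S,z_M)=\varphi(x)\oplus_{\Tea(E)}\varphi(y)$.

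For the converse, assume (i)--(iv). By~(ii), $z_S=x_S\oplus y_S\oplus S(x_M,y_M)$ is defined, so $S(x_M,y_M)\le z_S$ and hence, using~(iv), $z_M\le z_S'\le S(x_M,y_M)'$, i.e. $z_M\in h(S(x_M,y_M)')$; together with~(iii) this is exactly what is needed to invoke the converse part of Lemma~\ref{pommeag}, which yields that $x_M\oplus_E y_M$ is defined and equals $S(x_M,y_M)\oplus_E z_M$. The sum $z_S\oplus_E z_M$, which exists by~(iv), then rearranges, via (Ei), (Eii), to $x_S\oplus y_S\oplus\bigl(S(x_M,y_M)\oplus z_M\bigr)=x_S\oplus y_S\oplus x_M\oplus y_M=(x_S\oplus x_M)\oplus(y_S\oplus y_M)=x\oplus y$, so $x\oplus_E y$ is defined and equals $z_S\oplus z_M$; the value formula $\varphi(x\oplus y)=(z_S,z_M)$ follows as before. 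I expect the main obstacle to be the careful bookkeeping in this core step: controlling the ``cross term'' $S(x_M,y_M)$, the sharp part arising from adding the two meager components, keeping track that $x_S,x_M,y_S,y_M$ all lie in one block, and justifying each rearrangement of a finite orthosum. With the equivalence and value formula in hand, transporting (Ei)--(Eiv) along the bijection $\varphi$ to conclude that $\Tea(E)$ is an effect algebra and $\varphi$ an isomorphism is routine.
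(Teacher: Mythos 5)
Your proposal is correct and follows essentially the same route as the paper: well-definedness and bijectivity of $\varphi$ via the unique decomposition $x=\widetilde{x}\oplus(x\ominus\widetilde{x})$ (Corollary~\ref{gejza}), and the core definedness-and-value equivalence obtained by applying Lemma~\ref{pommeag} to the meager parts $x_M,y_M$ and rearranging finite orthosums, with the effect-algebra axioms for $\Tea(E)$ then transported along $\varphi$. The only difference is presentational: the paper compresses the core step into one chain of equivalences, while you separate the two directions and make explicit the derivation of $z_M\in h(S(x_M,y_M)')$ from conditions (ii) and (iv).
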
}
\begin{proof} Evidently, $\varphi$ is correctly defined since, 
for any $x\in E$, we have that $x=\widetilde{x}\oplus_{E} (x\ominus \widetilde{x})=%
x_S\oplus_{E} x_M$, $x_S\in \Sh(E)$ and $x_M\in \Mea(E)$. Hence 
$\varphi(x) = (x_S, x_M)\in \Sh(E) \times \Mea(E)$ and $x_M\in h(x_S')$. Let us check that $\varphi$ is 
bijective. Assume first that $x, y\in E$ such that $\varphi(x) =\varphi(y)$. 
We have $x=\widetilde{x}\oplus_{E} (x\ominus_{E} \widetilde{x})=%
\widetilde{y}\oplus_{E} (y\ominus_{E} \widetilde{y})=y$. Hence 
$\varphi$ is injective. Let $(x_S, x_M)\in \Sh(E) \times \Mea(E)$ and 
$x_M\in h(x_S')$. This yields that $x=x_S\oplus_{E} x_M$ exists and 
evidently by Lemma \ref{jpy2}, (i) $\widetilde{x}=x_S$ and $x\ominus_{E} \widetilde{x}=x_M$.
It follows that $\varphi$ is surjective. Moreover, 
$\varphi(0_{E})=(0_E,0_E)=0_{\Tea(E)}$ and\/ $\varphi(1_{E})=(1_E,0_E)=1_{\Tea(E)}$.

Now, let us check that, for all $x, y\in E$, $x\oplus_E y$ 
is defined iff $\varphi(x)\oplus_{\Tea(E)} \varphi(y)$ is defined 
in which case $\varphi(x\oplus_E y)=\varphi(x)\oplus_{\Tea(E)} \varphi(y)$.  
For any $x, y, z\in E$ we obtain 
\begin{center}
\begin{tabular}{@{}c r c l}
\multicolumn{4}{@{}l}{$z=x\oplus_E y$  $\iff$}\\ 
\multicolumn{4}{@{}l}{$z=\bigl(\widetilde{x}\oplus_E (x\ominus_{E} \widetilde{x})\bigr)%
\oplus_E \bigl(\widetilde{y}\oplus_E (y\ominus_{E} \widetilde{y})\bigr)$ 
 $\iff$} \\
\multicolumn{4}{@{}l}{$z=(\widetilde{x}\oplus_E \widetilde{y})%
\oplus_E \bigl((x\ominus_{E} \widetilde{x})\oplus_E (y\ominus_{E} \widetilde{y})\bigr)$ \ $\iff$} \ %
\\
\multicolumn{4}{@{}l}{by Lemma \ref{pommeag}\ $(\exists u\in E)\ %
u=S(x\ominus_{E} \widetilde{x}, y\ominus_{E} \widetilde{y})$  and}\\ %
\multicolumn{4}{@{}l}{$\begin{array}{@{}r@{}c@{}l}
z=(\widetilde{x}\oplus_E \widetilde{y})%
\oplus_E \biggl(u&\oplus_{E}&\bigl((x\ominus_{E} \widetilde{x})\ominus_{E} (u\wedge (x\ominus_{E} \widetilde{x}))\bigr)\\
&\oplus_{E}&\bigl((y\ominus_{E} \widetilde{y})\ominus_{E} (u\wedge (y\ominus_{E} \widetilde{y}))\bigr)\biggr)\ 
\end{array}$ }\\
$\iff$&\multicolumn{3}{@{}l}{$(\exists u\in E)\ %
u=S(x\ominus_{E} \widetilde{x}, y\ominus_{E} \widetilde{y})$  and}\\ %
\multicolumn{4}{@{}l}{$\begin{array}{r@{}c@{}l}
z=(\widetilde{x}\oplus_E \widetilde{y}%
\oplus_E u)&\oplus_{E}&\biggl(\bigl((x\ominus_{E} \widetilde{x})\ominus_{E} (u\wedge (x\ominus_{E} \widetilde{x}))\bigr)\\
&\oplus_{E}&\phantom{(}\bigl((y\ominus_{E} \widetilde{y})\ominus_{E} (u\wedge (y\ominus_{E} \widetilde{y}))\bigr)\biggr)\ 
\end{array}$ }\\
$\iff$&\multicolumn{3}{@{}l}{$(\exists u\in E)\ %
u=S(x\ominus_{E} \widetilde{x}, y\ominus_{E} \widetilde{y})$  and}\\ %
\multicolumn{4}{@{}l}{$\begin{array}{r@{}c@{}l}
z=(\widetilde{x}\oplus_{\Sh(E)} \widetilde{y}%
&\oplus_{\Sh(E)}& u) \\
&\oplus_{E}&\biggl(\bigl((x\ominus_{E} \widetilde{x})\ominus_{\Mea(E)} %
(u\wedge (x\ominus_{E} \widetilde{x}))\bigr)\\
&\oplus_{\Mea(E)}&\phantom{(}\bigl((y\ominus_{E} \widetilde{y})\ominus_{\Mea(E)} %
(u\wedge (y\ominus_{E} \widetilde{y}))\bigr)\biggr)\ 
\end{array}$ }\\
$\iff$&\multicolumn{3}{@{}l}{$(\widetilde{x}, x\ominus_{E} \widetilde{x}) %
\oplus_{\Tea(E)} (\widetilde{y}, y\ominus_{E} \widetilde{y})$ is defined and}\\
\multicolumn{4}{@{}l}{%
\begin{tabular}{@{}r@{}c@{}l}$\varphi(z)$&=&%
$\biggl(\widetilde{x}\oplus_{\Sh(E)}\widetilde{y}\oplus_{\Sh(E)}%
S(x\ominus_{E} \widetilde{x}, y\ominus_{E} \widetilde{y}), %
\bigl((x\ominus_{E} \widetilde{x}) \ominus $\\%
&&($S(x\ominus_{E} \widetilde{x}, y\ominus_{E} \widetilde{y})%
{\wedge} (x\ominus_{E} \widetilde{x}))\bigr)\oplus_{\Mea(E)} \bigl((y\ominus_{E} \widetilde{y}) $\\%
&&$\ominus%
(S(x\ominus_{E} \widetilde{x}, y\ominus_{E} \widetilde{y})%
\wedge (y\ominus_{E} \widetilde{y}))\bigr)\biggr)$\\
&=&$(\widetilde{x}, x\ominus_{E} \widetilde{x}) %
\oplus_{\Tea(E)} (\widetilde{y}, y\ominus_{E} \widetilde{y})=%
\varphi(x)\oplus_{\Tea(E)} \varphi(y)$.
\end{tabular}}
\end{tabular}
\end{center}

Altogether, $\Tea(E)=(\Tea(E), \oplus_{\Tea(E)}, 0_{\Tea(E)},$ $1_{\Tea(E)})$ 
is an effect algebra and the mapping $\varphi:E\to \Tea(E)$     
is an isomorphism of effect algebras.
\end{proof}

The Triple Representation Theorem then follows immediately.

\begin{remark}\label{larem}{\rm Recall that our method may be also used in the case of 
complete lattice effect algebras as a substitute of the method from 
\cite{jenca}. Moreover, since any  homogeneous orthocomplete effect algebra $E$ is both  meager-orthocomplete and 
sharply dominating  the Triple Representation Theorem is valid within the class of homogeneous orthocomplete effect algebras 
which was an open question asked by Jen\v{c}a in \cite{jenca}.}
\end{remark}

\section*{Acknowledgements}  J. Paseka gratefully acknowledges Financial Support 
of the  Ministry of Education of the Czech Republic
under the project MSM0021622409 and of Masaryk University under the grant 0964/2009. 
Both authors acknowledge the support by ESF Project CZ.1.07/2.3.00/20.0051
Algebraic methods in Quantum Logic of the Masaryk University.

\end{document}